\numberwithin{equation}{section}
\date{}
\newtheorem{theorem}{Theorem}[section]
\newtheorem{proposition}[theorem]{Proposition}
\newtheorem{lemma}[theorem]{Lemma}
\newtheorem{corollary}[theorem]{Corollary}
\newtheorem{remark}{Remark}[section]
\newtheorem{definition}[theorem]{Definition}
\newcommand{\be}{\begin{equation}}
\newcommand\ee{\end{equation}}
\newcommand\bes{\begin{eqnarray}}
\newcommand\ees{\end{eqnarray}}
\newcommand\bess{\begin{eqnarray*}}
\newcommand\eess{\end{eqnarray*}}
\title{Stability and instability for compressible Navier-Stokes equations with Yukawa potential}
\author{Juanzi Cai$^1$\ \ \ Zhigang Wu$^1$\thanks{zgwu@dhu.edu.cn}
\ \ \ Guochun Wu$^2$}
\begin{document}

\maketitle
\renewcommand{\thefootnote}{\fnsymbol{footnote}}

\footnotetext{1. Department of Mathematics, Donghua University, Shanghai 201620, P.R. China.}
\footnotetext{2. \!\!\!Fujian Province University Key Laboratory of Computational Science, School of Mathematical Sciences, Huaqiao University, Quanzhou 362021, P.R. China.}

\renewcommand{\thefootnote}{\arabic{footnote}}


\textbf{{\bf Abstract:}} In this paper, we first consider global well-posedness and long time behavior of compressible Navier-Stokes equations with Yukawa-type potential in $L^p$-framework under the stability condition $P'(\bar\rho)+\gamma\bar\rho>0$. Here $\bar\rho>0$ is the background density, $P$ is the pressure and $\gamma\in\mathbb{R}$ is Yukawa coefficient. This is a continuity work of Chikami \cite{chikami1} concerning on local existence and blow-up criterion.
On the other hand, we study the instability of the linear and nonlinear problem of the system when $P'(\bar\rho)+\gamma\bar\rho<0$ in the Hadamard sense.

\textbf{{\bf Key Words}:} Navier-Stokes equations; Yukawa potential; $L^p$-framework; Instability.

{\textbf{AMS Subject Classification :} 35Q30, 74H40, 76N10.

	\section{Introduction}
	
\quad\quad In this paper, our concern is
the compressible Navier-Stokes equations with Yukawa-type potential \cite{chikami1}:
\begin{equation} \label{1.1}
\left\{\begin{array}{ll}
\rho_{t}+{\rm div}(\rho \mathbf{u})=0,\\[2mm]
(\rho \mathbf{u})_{t}+{\rm div}(\rho \mathbf{u}\otimes \mathbf{u})+\nabla P(\rho)+\gamma\rho\nabla\phi=\mu\Delta\mathbf{u}+(\mu+\mu')\nabla{\rm div}\mathbf{u},\\[2mm]
-\Delta\phi+\phi=\rho-\bar{\rho},\ \ \lim\limits_{|x|\rightarrow\infty}\rho(x,t)=\bar{\rho}>0,\ \ (x,t)\in\mathbb{R}^d\times\mathbb{R}_+,
\end{array}\right.
\end{equation}
where the unknown functions $\rho(x,t),\ \mathbf{u}(x,t)$ and $\phi(x,t)$ represent fluid density, velocity and the potential force exerted in the fluid, respectively. $P=P(\rho)$ is the pressure given by a smooth function depending on $\rho$, and $\mathbf{u}\otimes\mathbf{u}$ denotes the tensor product of two vectors. The viscosity constant $\mu,\mu'$ satisfy $\mu>0$ and $2\mu+\mu'>0$. The constant $\gamma\in\mathbb{R}$ may be arbitrary and is essential on its sign.

When $\gamma=0$, (\ref{1.1}) reduces to compressible Navier-Stokes system, which describes the motion of a barotropic viscous compressible flow. Due to the significance of the physical background, this model has been widely studied in past decades, see
\cite{feireisl1,feireisl4,hoff2,hoff3,jiang1,jiang2,kawashima,liu,wen1,wen3,xin1,xin2} and references therein. Matsumura and Nishida \cite{mastsumura1,mastsumura2} first obtained the global existence of small solution in $H^3(\mathbb{R}^3)$, and further got the $L^2$-decay rate of the solution as the heat equation under an additional assumption: the initial perturbation is small in $L^1$. Later on, Ponce \cite{ponce} gave the optimal $L^p$ $(p\geq2)$ decay rates of the solution and its first and second derivatives when the small perturbation is in $H^l\cap W^{l,1}$ with $l\geq4$, Duan, Liu, Ukai and Yang \cite{duan1} studied $L^p$ $(2\leq p\leq 6)$ decay rate for the system with external force and they need not the smallness of the initial perturbation in $L^1$-space. By replacing $L^1$-space by $\dot{B}_{1,\infty}^{-s}$ with $s\in[0,1]$, Li and Zhang \cite{li} achieve a faster $L^2$-decay rate of  the solution as $(1+t)^{-\frac{3}{4}+\frac{s}{2}}$. Guo and Wang \cite{guo1} developed a pure energy method to derive the $L^2$-decay rate of the solution and its derivative in $H^l\cap\dot{H}^{-s}$ with $l\geq3$ and $s\in[0,\frac{3}{2})$. In particular, they arrived at $\|\nabla^k(\rho-1,\mathbf{u})\|_{L^2}\lesssim(1+t)^{-\frac{k+s}{2}},\ 0\leq k\leq l-1$. Recently, Hu and Wu \cite{hu} established optimal rates of decay for solutions to the isentropic compressible Navier-Stokes equations with discontinuous initial data.

In the framework of critical spaces for the system (\ref{1.1}) with $\gamma=0$, a breakthrough was provided by Danchin \cite{danchin1},
where the author established the global existence of strong solutions to  for the initial data in the vicinity of the equilibrium in $(\dot{B}_{2,1}^{\frac{d}{2}}\cap\dot{B}_{2,1}^{\frac{d}{2}-1})\times\dot{B}_{2,1}^{\frac{d}{2}-1}$ with $d\geq2$. Subsequently, Charve and Danchin \cite{charve} and Chen, Miao and Zhang \cite{chen2} extended that result to the general $L^p$ critical Besov spaces. Haspot \cite{haspot} obtained the same results as in \cite{charve,chen2} by employing a more direct energy approach based on using Hoff's viscous effective
flux in \cite{hoff1}. Peng and Zhai \cite{peng} proved the global existence for $d$-dimensional compressible Navier-Stokes equations without heat conductivity for $d\geq2$ in $L^p$-framework. The critical Besov space in \cite{charve,chen2,haspot} for the compressible Navier-Stokes equations can be regraded as the largest one in which the system is well-posed, since Chen, Miao and Zhang verified the ill-posedness in \cite{chen3}.

When $\gamma\neq0$, (\ref{1.1}) becomes the compressible Navier-Stokes equations with a Yukawa-potential,
which is a simplified hydrodynamical model describing the nuclear matter \cite{chikami1,ducomet}. There are few results on this model. Chikami \cite{chikami1} studied local existence and blow-up criterion for the Cauchy problem of (\ref{1.1}) in critical $L^p$-space for any $\gamma\in\mathbb{R}$. Chen, Wu, Zhang and Zou \cite{chen4} obtained the global existence of small solution in $H^3$-space and the $L^2$-decay rate when the $L^1$-norm of the initial data is bounded when $P'(\bar\rho)>0$ and $\gamma>0$ with $\bar\rho>0$.

In this paper, our first goal is to study the global existence of the small strong solution to the Cauchy problem of (\ref{1.1}) in critical $L^p$-space based on the local existence in \cite{chikami1}. In particular, we achieve it under the stability condition $P'(\bar\rho)+\gamma\bar\rho>0$, which is naturally weaker than that in \cite{chen4}. And we also gave the $L^p$-decay rate of the strong solution. To this end, we first reformulate the system as follows. Based on the state equation, one has
\begin{equation*}
	\alpha_1\triangleq\frac{P'(\bar{\rho})}{\bar{\rho}},\ \  \alpha_2\triangleq\bar{\rho},\ \
	\alpha_3\triangleq\frac{\mu}{\bar{\rho}},\ \
	\alpha_4\triangleq\frac{\mu+\mu'}{\bar{\rho}}.
\end{equation*}
 By denoting
\begin{equation*}
	\tilde{\rho}\triangleq \rho-\bar{\rho},\ \ F(\rho)\triangleq\frac{P'(\rho+\bar{\rho})}{\rho+\bar{\rho}},\ \
	k(\tilde{\rho})\triangleq\frac{\rho}{\rho+\bar{\rho}},
\end{equation*}
then, the new system on the variables $\tilde{\rho},\mathbf{u}$ reads
\begin{equation}\label{1.4}
	\begin{cases}
		\partial_t\tilde{\rho}+\alpha_2{\rm div}\mathbf{u}=-\mathbf{u}\cdot\nabla \tilde{\rho}-\tilde{\rho}{\rm div}\mathbf{u},\\
		\partial_t\mathbf{u}-\alpha_3\Delta \mathbf{u}-\alpha_4\nabla{\rm div}\mathbf{u}+\alpha_1\nabla\tilde{\rho}+\gamma\nabla\phi\\
\ \ \ \ \ \ \ \ \ =-\mathbf{u}\cdot\nabla \mathbf{u}-F(\tilde{\rho})\nabla \tilde{\rho}-\alpha_3k(\tilde{\rho})\Delta \mathbf{u}-\alpha_4k(\tilde{\rho})\nabla{\rm div}\mathbf{u},\\
		-\Delta\phi+\phi=\tilde{\rho}.
	\end{cases}
\end{equation}
We consider the initial value problem of (\ref{1.4}) with the initial data
\begin{equation}\label{1.5}
	(\tilde{\rho}(x,t),\mathbf{u}(x,t))|_{t=0}=(\tilde{\rho}_0(x),\mathbf{u}_0(x))\rightarrow(0,0)\ \ {\rm for}\ |x|\rightarrow\infty.
\end{equation}
Solving the third equation for $\phi$ and plugging it into the second one, the equations for $(\tilde{\rho},\mathbf{u})$ are written as follows:
\begin{equation}\label{1.5(0)}
	\begin{cases}
		\partial_t\tilde{\rho}+\alpha_2{\rm div}{\mathbf{u}}=-\mathbf{u}\cdot\nabla \tilde{\rho}-\tilde{\rho}{\rm div}\mathbf{u},\\
		\partial_t\mathbf{u}-\alpha_3\Delta \mathbf{u}-\alpha_4\nabla{\rm div}\mathbf{u}+\alpha_1\nabla \tilde{\rho}+\gamma\frac{\nabla}{1-\Delta}\tilde{\rho}\\
\ \ \ \ \ \ \ \ \ \ \ \ \ \ \ =-\mathbf{u}\cdot\nabla \mathbf{u}-F(\tilde{\rho})\nabla \tilde{\rho}-\alpha_3k(\tilde{\rho})\Delta \mathbf{u}-\alpha_4k(\tilde{\rho})\nabla{\rm div}\mathbf{u}.
	\end{cases}
\end{equation}
 Note that when $\gamma=0$, the stability condition in this paper is consistent with the barotropic compressible Navier-Stokes equations  considered in \cite{charve,chen2,danchin1,haspot}. Additionally, this stability condition can be found in both spectral analysis and energy estimate. In fact, due to the presence of Bessel potential $(1-\Delta)^{-1}$ when one use $\rho$ to represent $\phi$ though the third equation in (\ref{1.1}), we shall reconstruct Green's function of (\ref{1.1}) to derive the estimate of low frequency in Besov space compared to the classical compressible Navier-Stokes equations. In particular, we divide the term $\gamma\frac{\nabla}{1-\Delta}\tilde{\rho}$ into $\gamma\nabla \tilde{\rho}$ and $\gamma\frac{\Delta}{1-\Delta}\nabla\tilde{\rho}$, then in the low frequency, the term $(\frac{P'(\bar\rho)}{\bar\rho}+\gamma)\nabla\rho$ is corresponding to $\frac{P'(\bar\rho)}{\bar\rho}$ in the classical compressible Navier-Stokes equations.  As a result, we need not $P'(\bar\rho)>0$ due to the presence of Yukawa potential.

The main results on the Cauchy problem (\ref{1.4})-(\ref{1.5}) are stated as follows. First of all, we denote homogeneous Besov space (its definition is in Appendix) $\dot{B}_{p,r}^\sigma:=\dot{B}_{p,r}^\sigma(\mathbb{R}^d)$ without confusion.
\begin{theorem}[Global existence]\label{Theorem 1.1}  Assume that $P'(\bar\rho)+\gamma\bar\rho>0$.
	Let $d\geq2$ and $p$ fulfills
	\begin{equation}\label{1.2.6}
		2\leq p\leq\min(4,2d/(d-2))\ and, \ additionally,\
		p\neq4\ if\ d=2.
	\end{equation}
	For $(\tilde{\rho}_0^l,\mathbf{u}_0^l)\in\dot{B}_{2,1}^{\frac{d}{2}-1}$ and $\tilde{\rho}_0^h\in\dot{B}_{p,1}^{\frac{d}{p}}$, $\mathbf{u}_0^h\in\dot{B}_{p,1}^{\frac{d}{p}-1}$, if there exists a small constant $c_0$ such that
	\begin{equation}\label{1.2.7}
		\|(\tilde{\rho}_0^l,\mathbf{u}_0^l)\|_{\dot{B}_{2,1}^{\frac{d}{2}-1}}+\|\tilde{\rho}_0^h\|_{\dot{B}^{\frac{d}{p}}_{p,1}}+\|\mathbf{u}_0^h\|_{\dot{B}_{p,1}^{\frac{d}{p}+1}}\leq c_0,
	\end{equation}
	then the Cauchy problem (\ref{1.4})-(\ref{1.5}) has a unique global solution $(\tilde{\rho},\mathbf{u})$ such that
	\begin{equation*}
		\begin{split}
			&\tilde{\rho}^l\in C_b(\mathbb{R}^+,\dot{B}_{2,1}^{\frac{d}{2}-1})\cap L^1(\mathbb{R}^+,\dot{B}_{2,1}^{\frac{d}{2}+1}),\ \tilde{\rho}^h\in C_b(\mathbb{R}^+,\dot{B}_{p,1}^{\frac{d}{p}})\cap L^1(\mathbb{R}^+,\dot{B}_{p,1}^{\frac{d}{p}}),\\
			&\mathbf{u}^l\in C_b(\mathbb{R}^+,\dot{B}_{2,1}^{\frac{d}{2}-1})\cap L^1(\mathbb{R}^+,\dot{B}_{2,1}^{\frac{d}{2}+1}),\ \mathbf{u}^h\in C_b(\mathbb{R}^+,\dot{B}_{p,1}^{\frac{d}{p}-1})\cap L^1(\mathbb{R}^+,\dot{B}_{p,1}^{\frac{d}{p}+1}).
		\end{split}
	\end{equation*}
	Moreover, there exists some positive constant $C$ independent of $t$ such that
	\begin{equation*}\label{1.7}
		\mathcal{X}(t)\leq Cc_0,
	\end{equation*}
	with
	\begin{equation*}
		\begin{split}
			\mathcal{X}(t)\triangleq&\|(\tilde{\rho}^l,\mathbf{u}^l)\|_{\tilde{L}_t^\infty(\dot{B}_{2,1}^{\frac{d}{2}-1})}+\|(\tilde{\rho}^l,\mathbf{u}^l)\|_{L_t^1(\dot{B}_{2,1}^{\frac{d}{2}+1})}\\
			&+\|\tilde{\rho}^h\|_{\tilde{L}_t^\infty(\dot{B}_{p,1}^{\frac{d}{p}})}+\|\mathbf{u}^h\|_{\tilde{L}_t^\infty(\dot{B}_{p,1}^{\frac{d}{p}-1})}
			+\|\tilde{\rho}^h\|_{L_t^1(\dot{B}_{p,1}^{\frac{d}{p}})}+\|\mathbf{u}^h\|_{L_t^1(\dot{B}_{p,1}^{\frac{d}{p}+1})}.
		\end{split}
	\end{equation*}
\end{theorem}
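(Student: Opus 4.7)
The plan is to obtain uniform-in-time a priori bounds on $\mathcal{X}(t)$ for any strong solution of (\ref{1.4})--(\ref{1.5}) on the maximal existence interval $[0,T^*)$ provided by Chikami's local theory \cite{chikami1}, and then close a continuity argument: smallness of $\mathcal{X}(0)$ together with the linear/nonlinear estimates yields $\mathcal{X}(t)\leq Cc_0$ on $[0,T^*)$, which combined with the blow-up criterion of \cite{chikami1} forces $T^*=+\infty$. All estimates are carried out block-by-block in the Littlewood--Paley decomposition, with a fixed cut-off $j_0$ separating low from high frequencies, chosen so that the Bessel multiplier $\Delta/(1-\Delta)$ is of order $O(|\xi|^2)$ for $j\leq j_0$ and $-1+O(|\xi|^{-2})$ for $j>j_0$. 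This cut-off is precisely what dictates the two-scale ($L^2$-low, $L^p$-high) structure of $\mathcal{X}(t)$ and mirrors the Charve--Danchin / Chen--Miao--Zhang framework for the $\gamma=0$ system.

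The core of the proof is the linear analysis of (\ref{1.5(0)}), based on the algebraic rewriting flagged in the introduction, $\gamma\frac{\nabla}{1-\Delta}\tilde\rho=\gamma\nabla\tilde\rho+\gamma\frac{\Delta}{1-\Delta}\nabla\tilde\rho$. The first summand merges with $\alpha_1\nabla\tilde\rho$ into a pressure-type term with coefficient $\alpha_1+\gamma=(P'(\bar\rho)+\gamma\bar\rho)/\bar\rho$, strictly positive by the stability hypothesis and playing the role of a positive squared sound speed, while the second summand is two orders smoother at low frequency and of order $-1$ at high frequency, hence a perturbation in both regimes. For the low-frequency block I would symmetrize the density--compressible-velocity subsystem and run $L^2$-energy estimates on each dyadic block (or equivalently reconstruct Green's function, as flagged by the authors), absorbing the Yukawa remainder thanks to the two-derivative gain, to obtain control in $\tilde{L}^\infty_t(\dot{B}^{d/2-1}_{2,1})\cap L^1_t(\dot{B}^{d/2+1}_{2,1})$. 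For the high-frequency block I would use Haspot's approach built on a Hoff-type effective velocity, e.g.\ one of the form $(2\alpha_3+\alpha_4)\mathrm{div}\,\mathbf{u}-(\alpha_1+\gamma)\tilde\rho$ corrected for the Yukawa term, to extract exponential damping of $\tilde\rho^h$ and maximal $L^1_t$-regularity of the Lam\'e operator for $\mathbf{u}^h$; there the Yukawa perturbation is harmless because its multiplier is of order $-1$.

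The nonlinearities $\mathbf{u}\cdot\nabla\tilde\rho$, $\tilde\rho\,\mathrm{div}\,\mathbf{u}$, $F(\tilde\rho)\nabla\tilde\rho$, $\alpha_3 k(\tilde\rho)\Delta\mathbf{u}$ and $\alpha_4 k(\tilde\rho)\nabla\mathrm{div}\,\mathbf{u}$ are then treated by Bony paraproduct--remainder decomposition together with standard composition bounds for $F$ and $k$, which are valid because $\|\tilde\rho\|_{L^\infty}\ll\bar\rho$ from $\dot{B}^{d/2-1}_{2,1}\cap\dot{B}^{d/p}_{p,1}\hookrightarrow L^\infty$ once $\mathcal{X}(t)$ is small. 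The condition (\ref{1.2.6}) on $p$ enters precisely so that the product rules needed in $\dot{B}^{d/p-1}_{p,1}$ and $\dot{B}^{d/p}_{p,1}$ all hold simultaneously, and each product is split into low- and high-frequency parts compatible with the norms defining $\mathcal{X}(t)$. Summing the linear and nonlinear contributions produces a bootstrap-type inequality $\mathcal{X}(t)\leq C\mathcal{X}(0)+C\mathcal{X}(t)^2$, so smallness of $c_0$ yields $\mathcal{X}(t)\leq 2Cc_0$ on $[0,T^*)$ and the continuity argument closes.

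The main obstacle will be the uniform handling of the nonlocal Yukawa operator $\gamma\nabla/(1-\Delta)$ in the low-frequency regime, where the classical symmetric parabolic-hyperbolic structure is not immediately available; this is exactly the step where constructing Green's function (or equivalent resolvent bounds) for the linearized system becomes unavoidable, and is the point at which the stability condition $P'(\bar\rho)+\gamma\bar\rho>0$ is genuinely used. Once this linear ingredient is in hand, the high-frequency damping via the effective velocity, the paraproduct/composition bounds and the closing continuity argument follow the by-now-standard critical $L^p$-Besov template and should go through as in \cite{charve,chen2,haspot}.
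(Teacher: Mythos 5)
Your proposal follows essentially the same route as the paper: the splitting $\gamma\frac{\nabla}{1-\Delta}\tilde\rho=\gamma\nabla\tilde\rho+\gamma\frac{\Delta}{1-\Delta}\nabla\tilde\rho$ with effective sound speed $\alpha_1+\gamma>0$, Green's-function/spectral control of the low-frequency $(\tilde\rho,\Lambda^{-1}\mathrm{div}\,\mathbf{u})$ system, a Haspot-type effective velocity in high frequency with the Yukawa multiplier treated as a lower-order (order $-1$) perturbation absorbed by taking $j_0$ large, Bony product and composition estimates for the nonlinearities, and the bootstrap inequality $\mathcal{X}(t)\leq C\mathcal{X}(0)+C\mathcal{X}(t)^2$ closed by a continuation argument on Chikami's local solution. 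The only cosmetic deviation is your coefficient $\alpha_1+\gamma$ in the effective velocity, whereas the paper uses $G=\mathbb{Q}\mathbf{u}-\frac{\alpha_1}{\alpha_3+\alpha_4}\Delta^{-1}\nabla\tilde\rho$ and dumps the whole Yukawa term into the harmless high-frequency remainder; both choices work.
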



\begin{remark}\label{Remark 1.1} Under the stability condition $P'(\bar\rho)+\gamma\bar\rho>0$, we obtain the global existence of strong solution in $L^p$-framework, which is a complement of the local existence in \cite{chikami1}. On the other hand, we also refine the existence result in \cite{chen4}, where the authors gave the global existence and $L^2$-decay rate in Sobolev space under a stronger condition $P'(\bar\rho)>0$ and $\gamma>0$.
\end{remark}

The optimal time-decay estimates of solutions are obtained as follows.

\begin{theorem}\label{Theorem 1.2}  Assume that $P'(\bar\rho)+\gamma\bar\rho>0$, $d\geq2$ and $(\tilde{\rho},\mathbf{u})$ is the global solution of the Cauchy problem (\ref{1.4})-(\ref{1.5}) given in Theorem \ref{Theorem 1.1}. Furthermore, suppose that the real number $\sigma$ satisfies
	\begin{equation}\label{1.3.8}
		1-\frac{d}{2}<\sigma\leq\sigma_0\triangleq\frac{2d}{p}-\frac{d}{2}.
	\end{equation}
Then, when $\|(\tilde{\rho}_0^l,\mathbf{u}_0^l)\|_{\dot{B}_{2,\infty}^{-\sigma}}$ is bounded, it holds that
	\begin{equation}\label{1.3.9}
		\|\Lambda^{\sigma_1}(\tilde{\rho},\mathbf{u})\|_{L^p}\leq C(1+t)^{-\frac{d}{2}(\frac{1}{2}-\frac{1}{p})-\frac{\sigma+\sigma_1}{2}},\ if\ -\sigma-d(\frac{1}{2}-\frac{1}{p})<\sigma_1\leq\frac{d}{p}-1.
	\end{equation}
\end{theorem}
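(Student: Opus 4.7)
The overall plan is to follow the low/high frequency decomposition strategy of Danchin and collaborators adapted to the Yukawa setting: propagate an auxiliary $\dot B_{2,\infty}^{-\sigma}$ bound at low frequency, interpolate it against the critical-regularity estimates supplied by Theorem \ref{Theorem 1.1}, and then glue the two frequency regimes together to obtain the claimed $L^p$ bound. The decomposition $\gamma\frac{\nabla}{1-\Delta}\tilde\rho=\gamma\nabla\tilde\rho+\gamma\frac{\Delta}{1-\Delta}\nabla\tilde\rho$ highlighted before Theorem \ref{Theorem 1.1} is essential: at low frequency the linear part of (\ref{1.5(0)}) behaves like a standard hyperbolic–parabolic system with effective sound speed $(\alpha_1+\gamma)=(P'(\bar\rho)+\gamma\bar\rho)/\bar\rho>0$, while the residue $\frac{\Delta}{1-\Delta}\nabla$ has symbol of order $|\xi|^3/(1+|\xi|^2)$ and is actually smoothing on low frequencies, so it does not influence the decay rate.

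\textbf{Step 1 (propagation of $\dot B_{2,\infty}^{-\sigma}$ at low frequency).} Localize (\ref{1.5(0)}) by $\dot\Delta_j$ with $j\le j_0$, perform an $L^2$ energy estimate on the effective unknown built from $\tilde\rho$ and the compressible part of $\mathbf{u}$, and take the supremum in $j$ after multiplication by $2^{-j\sigma}$, producing
$$
\|(\tilde\rho,\mathbf{u})^l(t)\|_{\dot B_{2,\infty}^{-\sigma}}\le C\|(\tilde\rho_0^l,\mathbf{u}_0^l)\|_{\dot B_{2,\infty}^{-\sigma}}+C\int_0^t \|\mathrm{NL}(\tau)\|_{\dot B_{2,\infty}^{-\sigma}}\,d\tau,
$$
with $\mathrm{NL}$ collecting the nonlinear terms $-\mathbf{u}\cdot\nabla\tilde\rho$, $-\tilde\rho\,\mathrm{div}\mathbf{u}$, $-\mathbf{u}\cdot\nabla\mathbf{u}$, $-F(\tilde\rho)\nabla\tilde\rho$, $-\alpha_3 k(\tilde\rho)\Delta\mathbf{u}$, $-\alpha_4 k(\tilde\rho)\nabla\mathrm{div}\mathbf{u}$. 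Each factor is split into its low and high frequency parts, then paraproduct/remainder laws together with the composition estimate for $F$ and $k$ bound every piece by $C\mathcal{X}(t)\mathcal{D}(t)$, where $\mathcal{D}(t):=\|(\tilde\rho,\mathbf{u})\|_{\tilde L^\infty_t(\dot B_{2,\infty}^{-\sigma})}$. The restriction $1-d/2<\sigma\le 2d/p-d/2$ is exactly what is needed for these products to close: the lower bound ensures $\sigma+d/2>1$ so that $\mathbf{u}\cdot\nabla\tilde\rho\in\dot B_{2,\infty}^{-\sigma}$, and the upper bound accommodates the mixed $L^2$–$L^p$ structure of $\mathcal{X}(t)$. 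Smallness of $\mathcal{X}(t)\le Cc_0$ then yields a uniform $\mathcal{D}(t)\le C$.

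\textbf{Step 2 (decay at low frequency) and Step 3 (high frequency, assembly).} With $\mathcal{D}(t)$ bounded, combine the low-frequency dissipation inequality
$$
\frac{d}{dt}\|(\tilde\rho,\mathbf{u})^l\|_{\dot B_{2,1}^{s}}+\kappa\|(\tilde\rho,\mathbf{u})^l\|_{\dot B_{2,1}^{s+2}}\lesssim\|\mathrm{NL}\|_{\dot B_{2,1}^{s}},
$$
valid thanks to the positivity of $\alpha_1+\gamma$, with the Gagliardo–Nirenberg-type interpolation
$$
\|f^l\|_{\dot B_{2,1}^{s}}\lesssim\|f^l\|_{\dot B_{2,\infty}^{-\sigma}}^{\frac{2}{\sigma+s+2}}\|f^l\|_{\dot B_{2,1}^{s+2}}^{\frac{\sigma+s}{\sigma+s+2}}.
$$
A Gr\"onwall/ODE argument of the form $\dot z+cz^{1+2/(\sigma+s)}\lesssim\|\mathrm{NL}\|_{\dot B_{2,1}^{s}}$, together with the $L^1_t$ integrability of the source coming from $\mathcal{X}(t)\le Cc_0$, produces $\|(\tilde\rho,\mathbf{u})^l(t)\|_{\dot B_{2,1}^{s}}\lesssim (1+t)^{-(\sigma+s)/2}$ for every admissible $s$. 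For the high-frequency part, $\tilde\rho^h$ satisfies a damped equation (through the effective pressure) and $\mathbf{u}^h$ is parabolically smoothed; time-weighted versions of the $L^p$ energy estimates used for Theorem \ref{Theorem 1.1} show that $(1+t)^\alpha(\tilde\rho^h,\mathbf{u}^h)$ retains the same norms for any $\alpha$ below the rate dictated by Step 2. Finally,
$$
\|\Lambda^{\sigma_1}f\|_{L^p}\lesssim\|\Lambda^{\sigma_1}f^l\|_{L^p}+\|\Lambda^{\sigma_1}f^h\|_{L^p},
$$
combined with the embedding $\dot B_{2,1}^{d/2-d/p+\sigma_1}\hookrightarrow\dot B_{p,1}^{\sigma_1}$ on low frequency (which enforces $\sigma_1>-d(1/2-1/p)-\sigma$ after merging with Step 2) and the high-frequency decay at level $\dot B_{p,1}^{\sigma_1}$ for $\sigma_1\le d/p-1$, delivers (\ref{1.3.9}).

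The main technical obstacle is Step 1: controlling the nonlinearities $F(\tilde\rho)\nabla\tilde\rho$ and $k(\tilde\rho)\Delta\mathbf{u}$ in the \emph{negative}-index space $\dot B_{2,\infty}^{-\sigma}$, while the controlling functional $\mathcal{X}(t)$ is of mixed $L^2$–$L^p$ type. Each nonlinearity must be decomposed simultaneously in Fourier localization (low/high) and in Bony paraproduct sense, and the composition estimate for $F$ and $k$ in the negative-index Besov space is exactly what forces the upper bound $\sigma\le 2d/p-d/2$ in (\ref{1.3.8}).
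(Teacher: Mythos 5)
Your skeleton (uniform propagation of the low-frequency $\dot B_{2,\infty}^{-\sigma}$ norm, interpolation against the critical norms, and the embedding $\dot B_{p,1}^{0}\hookrightarrow L^p$) is the paper's, and your Step 1 is essentially its Lemma \ref{le3.1} and Proposition \ref{proposition 3.1}. Even there, though, the bookkeeping is off: the high-frequency parts of the nonlinearity (e.g.\ $\mathbf{u}\cdot\nabla\tilde\rho^h$, $k(\tilde\rho)\Delta\mathbf{u}^h$) cannot be estimated in $\dot B_{2,\infty}^{-\sigma}$ by anything containing the negative norm of the solution; the paper bounds them by $\mathscr{E}_\infty(\tau)\mathscr{E}_1(\tau)$ through the special product law of Proposition \ref{A.14}, and since the energy identity controls the \emph{square} of the negative norm, closing requires the generalized Gr\"onwall inequality of Proposition \ref{A.15} with a sublinear term. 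Also, bounding the integrand by $C\mathcal{X}(t)\mathcal{D}(t)$, both factors being time-sups, would make the time integral grow linearly in $t$; what actually closes the argument is that the integrand is $\lesssim\mathscr{E}_1(\tau)\|(\tilde\rho,\mathbf{u})(\tau)\|^l_{\dot B_{2,\infty}^{-\sigma}}+\mathscr{E}_\infty(\tau)\mathscr{E}_1(\tau)$ together with $\int_0^\infty\mathscr{E}_1\,d\tau\lesssim c_0$.

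The genuine gap is in Steps 2--3, where the decay is produced. First, the implication ``source in $L^1_t$ $\Rightarrow$ decay at rate $(1+t)^{-(\sigma+s)/2}$'' for $\dot z+cz^{1+2/(\sigma+s)}\lesssim f(t)$ is false: an integrable $f$ with small late-time bumps forces $z$ to be at least the local mass of $f$, which need not decay at the claimed algebraic rate, so the nonlinearity must be \emph{absorbed}, not merely integrated. Second, absorption cannot be performed in a low-frequency-only inequality: the nonlinear terms contain high-frequency factors (e.g.\ $\tilde\rho^h\,\mathrm{div}\,\mathbf{u}$, $k(\tilde\rho)\Delta\mathbf{u}^h$) which the low-frequency dissipation does not control, and your separate high-frequency Step 3 (``time-weighted versions of the $L^p$ energy estimates'') is asserted without proof. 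The paper resolves both points by working with the single functional $E(t)=\|(\tilde\rho,\mathbf{u})\|^l_{\dot B_{2,1}^{d/2-1}}+\|\tilde\rho\|^h_{\dot B_{p,1}^{d/p}}+\|\mathbf{u}\|^h_{\dot B_{p,1}^{d/p-1}}$: the Section 2 estimates give $\frac{d}{dt}E+D\lesssim E\,D$ with $D$ the full (low plus high) dissipation, smallness of $E$ yields the source-free inequality \eqref{4.30}, and then the interpolation \eqref{4.32} (which is where the uniform negative-norm bound of Step 1 enters) combined with the elementary high-frequency comparisons \eqref{4.34} (valid because the norms are small and the high-frequency dissipation norms dominate the corresponding energy norms) produces the Lyapunov inequality \eqref{4.35}; solving it gives decay of the low and high frequency blocks simultaneously, after which the interpolation to $\dot B_{p,1}^{\sigma_1}$ and $\dot B_{p,1}^{0}\hookrightarrow L^p$ yield \eqref{1.3.9}. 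To salvage your decoupled route you would have to actually carry out the time-weighted high-frequency estimates and show that the low-frequency source itself decays at the required rate; neither step is present in your proposal.
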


\begin{remark}In fact, by choosing $p=2,d=3$ and $\sigma=\frac{3}{2}$ in Theorem \ref{Theorem 1.2}, one has $\|(\tilde{\rho},\mathbf{u})\|_{L^2}\lesssim(1+t)^{-\frac{3}{4}}$, which were shown by Chen $et.\ al.$ \cite{chen4} in Sobolev space with higher regularity.
\end{remark}

On the contrary, when $P'(\bar\rho)+\gamma\bar\rho<0$, we shall study the linear and nonlinear instability of the Cauchy problem (\ref{1.4})-(\ref{1.5}) in Sobolev space based on the idea in \cite{guo2,jang,jiang3,jiang4,wangy} by constructing the initial data relying on the maximum of the real part of the eigenvalue. For simplicity, we only state the results for three dimensional case, and we emphasize that two dimensional case can be obtained similarly.

\begin{theorem}\label{l 1.3}{\rm[Linear\ instability]} Assume that $P'(\bar\rho)+\gamma\bar\rho<0$. Then there exists one positive constant $\Theta$ and for any $\bar\Theta\in(0,\frac{\Theta}{2}]$, the linear system of (\ref{1.1}) admits unstable solution $(\rho_{\bar\Theta}^l,\mathbf{u}_{\bar\Theta}^l)$ with the initial data $(\rho_{0,\bar\Theta}^l,\mathbf{u}_{0,\bar\Theta}^l)$ satisfying
\begin{equation}\label{1.18}
(\rho_{\bar\Theta}^l-\bar\rho,\mathbf{u}_{\bar\Theta}^l)\in C^0(0,\infty; H^3(\mathbb{R}^3)),
\end{equation}
and
\begin{equation}\label{1.19}
\begin{split}
	e^{(\Theta-\bar\Theta)t}\|\rho_{0,\bar\Theta}^l\|_{L^2}\leq\|\rho_{\bar\Theta}^l\|\leq e^{\Theta t}\|\rho_{0,\bar\Theta}^l\|_{L^2},\\
e^{(\Theta-\bar\Theta)t}\|\mathbf{u}_{0,\bar\Theta}^l\|_{L^2}\leq\|\mathbf{u}_{\bar\Theta}^l\|\leq e^{\Theta t}\|\mathbf{u}_{0,\bar\Theta}^l\|_{L^2}.
\end{split}
\end{equation}
Here the initial data $(\rho_{0,\bar\Theta}^l,\mathbf{u}_{0,\bar\Theta}^l)$ depends on $\bar\Theta$ and
satisfy $\|\rho_{0,\bar\Theta}^l-\bar\rho\|_{L^2}\|\mathbf{u}_{0,\bar\Theta}^l\|_{L^2}>0$.
\end{theorem}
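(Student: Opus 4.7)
The plan is to linearize system (\ref{1.1}) around the equilibrium $(\bar\rho,0)$, diagonalize the resulting operator in Fourier variables, and take the initial datum to be a Fourier-localized eigenvector corresponding to the unstable eigenvalue whose real part lies within $\bar\Theta$ of its supremum $\Theta$.

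After eliminating $\phi=(1-\Delta)^{-1}\tilde\rho$ from the linearized momentum equation, the Fourier symbol decouples via the Helmholtz decomposition $\hat{\mathbf{u}}=\hat{\mathbf{u}}^\parallel+\hat{\mathbf{u}}^\perp$ into the purely dissipative transverse part and a $2\times 2$ ODE for $(\hat{\tilde\rho},w)$ with $w=\xi\cdot\hat{\mathbf{u}}/|\xi|$. The coefficient matrix $A(\xi)$ has characteristic polynomial
\begin{align*}
\lambda^2+\frac{2\mu+\mu'}{\bar\rho}|\xi|^2\lambda+|\xi|^2\Bigl(P'(\bar\rho)+\frac{\gamma\bar\rho}{1+|\xi|^2}\Bigr)=0,
\end{align*}
whose constant term is strictly negative for all sufficiently small $|\xi|$ since $P'(\bar\rho)+\gamma\bar\rho<0$. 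This produces a real positive root $\lambda_+(\xi)$ on an open set of low frequencies, and I set $\Theta:=\sup_{\xi\in\mathbb{R}^3}\mathrm{Re}\,\lambda_+(\xi)$; one then checks that $0<\Theta<\infty$, using $\lambda_+(\xi)\to 0$ as $|\xi|\to 0$ and the fact that at large $|\xi|$ the viscous contribution dominates the bounded Yukawa perturbation.

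Fixing $\bar\Theta\in(0,\Theta/2]$, I choose $\xi_*\neq 0$ with $\mathrm{Re}\,\lambda_+(\xi_*)\ge\Theta-\bar\Theta/2$; because the discriminant of the quadratic is strictly positive at $\xi_*$, the two eigenvalues of $A(\xi_*)$ separate, so $\lambda_+$ and a unit eigenvector $v_+(\xi)\in\mathbb{C}^2$ extend smoothly to a ball $B_\delta(\xi_*)$ on which $\mathrm{Re}\,\lambda_+(\xi)\ge\Theta-\bar\Theta$. Picking a smooth cutoff $\chi_\delta\in C_c^\infty(B_\delta(\xi_*))$, I set the initial datum in Fourier variables by $(\hat{\tilde\rho}_0,\hat{w}_0)(\xi):=\chi_\delta(\xi)\,v_+(\xi)$ and $\hat{\mathbf{u}}_0(\xi):=\hat{w}_0(\xi)\,\xi/|\xi|$. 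Compact Fourier support places $(\tilde\rho_0,\mathbf{u}_0)$ in $H^s(\mathbb{R}^3)$ for every $s\ge 0$, and the off-diagonal structure of $A(\xi_*)$ together with $\lambda_+(\xi_*)>0$ forces both components of $v_+(\xi_*)$ to be nonzero, so $\|\tilde\rho_0\|_{L^2}\,\|\mathbf{u}_0\|_{L^2}>0$ as required.

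The linear solution is then explicit on the Fourier side: $(\hat{\tilde\rho},\hat{w})(\xi,t)=\chi_\delta(\xi)\,e^{\lambda_+(\xi)t}\,v_+(\xi)$. The regularity (\ref{1.18}) follows from the compact Fourier support and the smoothness of $t\mapsto e^{\lambda_+(\xi)t}$, while Plancherel's theorem combined with the pointwise bound $\Theta-\bar\Theta\le\mathrm{Re}\,\lambda_+(\xi)\le\Theta$ on $\mathrm{supp}\,\chi_\delta$ delivers the two-sided estimates (\ref{1.19}) directly. The main technical obstacle is the spectral step: one must establish that $\Theta$ is strictly positive and finite (ruling out unbounded growth at high frequencies, where viscous damping must be shown to dominate the Yukawa perturbation uniformly), and confirm smoothness of $v_+$ near the selected $\xi_*$ via standard perturbation theory for the $2\times 2$ matrix $A(\xi)$. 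Once $\Theta$ and the eigenvector are in hand, the Fourier construction of the unstable mode is essentially mechanical.
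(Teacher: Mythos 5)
Your proposal is correct and follows essentially the same route as the paper: the same Helmholtz reduction to a $2\times 2$ Fourier symbol with characteristic polynomial $\lambda^2+\eta|\xi|^2\lambda+|\xi|^2\bigl(P'(\bar\rho)+\tfrac{\gamma\bar\rho}{1+|\xi|^2}\bigr)=0$, the definition of $\Theta$ as the supremum of the real parts of the eigenvalues, a compactly supported Fourier cutoff near a (near-)maximizing frequency loaded on the unstable eigenvector, and Plancherel for the two-sided bounds, exactly as in (\ref{4.8})--(\ref{4.10}). The only detail worth adding is real-valuedness of the constructed data: make the cutoff symmetric under $\xi\mapsto-\xi$ (the paper takes $\Psi$ depending only on $|\xi|$), which is harmless because $\lambda_+$ is a function of $|\xi|$ alone; your use of a near-maximizer $\xi_*$ in place of the paper's attained maximum $\xi_0$ is equally valid and in fact avoids justifying attainment of the supremum.
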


\begin{remark}The unstable solution for the linear problem  is constructed in (\ref{4.9}) by using cut-off technique as in \cite{chen5}, where they consider a quasi-linear hyperbolic-parabolic model for vasculogenesis. In fact, the main difference in the proof of instability for these two models is arising from the difference between the damped mechanism in \cite{chen5} and the diffusion mechanism for the system (\ref{1.1}). See the details in Section 4.
\end{remark}

\begin{theorem}\label{l 1.4}{\rm[Nonlinear\ instability]} Assume that $P'(\bar\rho)+\gamma\bar\rho<0$. Then the steady state $(\bar\rho,0)$ of the system (\ref{1.1}) is unstable in the Hadamard sense. In other words, there exist constants $\epsilon_0>0$ and $\delta_0>0$ such that for any $\delta\in(0,\delta_0)$, and the initial data $(\rho_0,\mathbf{u}_0)=\delta(\rho_{0,\bar\Theta}^l,\mathbf{u}_{0,\bar\Theta}^l)$ with $(\rho_{0,\bar\Theta}^l,\mathbf{u}_{0,\bar\Theta}^l)$ defined in Theorem \ref{l 1.3}, there exists a unique strong solution $(\rho,\mathbf{u})$ of the nonlinear problem (\ref{1.1}), such that
\begin{equation}\label{1.16}
\|\rho(T^\delta)-\bar\rho\|_{L^2}\geq\epsilon_0,\ \ \ \|\mathbf{u}(T^\delta)\|_{L^2}\geq\epsilon_0,
\end{equation}
for some escape time $T^\delta=\frac{1}{\Theta}\ln\frac{2\epsilon_0}{\delta}\in(0,T^{max})$. Here $T^{max}$ denotes the maximal time of existence of the solution $(\rho,\mathbf{u})$.
\end{theorem}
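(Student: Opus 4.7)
The plan is to follow the Guo--Strauss bootstrap argument, as implemented in this context in \cite{guo2,jang,jiang3,jiang4}: the linear growing mode furnished by Theorem \ref{l 1.3} is made to survive as the leading order of the nonlinear solution, while the full nonlinearity is absorbed as a quadratic perturbation. I would first fix $\bar\Theta\in(0,\Theta/2]$ small enough that $e^{(\Theta-\bar\Theta)t}$ is only a mild loss compared to $e^{\Theta t}$, normalize $\|(\rho_{0,\bar\Theta}^l,\mathbf u_{0,\bar\Theta}^l)\|_{L^2}=1$, and invoke a standard $H^3$ local existence theory for (\ref{1.1}) with small $H^3$ perturbative data (here the sign of $P'(\bar\rho)+\gamma\bar\rho$ plays no role) to obtain a unique strong solution $(\rho,\mathbf u)\in C([0,T^{max}); H^3(\mathbb{R}^3))$. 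Writing
\[
\rho-\bar\rho=\delta\rho_{\bar\Theta}^l+\rho^d,\qquad \mathbf u=\delta\mathbf u_{\bar\Theta}^l+\mathbf u^d,
\]
the error $(\rho^d,\mathbf u^d)$ solves the linearized system with zero initial data, forced by a quadratic nonlinearity $\mathcal N(\rho,\mathbf u)$ built from $\mathbf u\cdot\nabla\tilde\rho$, $\tilde\rho\,\mathrm{div}\,\mathbf u$, $\mathbf u\cdot\nabla\mathbf u$, $F(\tilde\rho)\nabla\tilde\rho$, $k(\tilde\rho)\Delta\mathbf u$ and $k(\tilde\rho)\nabla\mathrm{div}\,\mathbf u$ evaluated on the full solution.

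I would then set up the bootstrap
\[
T^*\triangleq\sup\Bigl\{\,t\in[0,T^{max})\,:\,\|(\rho-\bar\rho,\mathbf u)(s)\|_{H^3}\le K\delta e^{\Theta s}\ \text{for all}\ s\in[0,t]\,\Bigr\}
\]
with $K$ chosen large compared to the constants of Theorem \ref{l 1.3}, so that $T^*>0$ by continuity and by the $H^3$ bounds on $(\rho_{\bar\Theta}^l,\mathbf u_{\bar\Theta}^l)$. On $[0,T^*]$, Moser-type Sobolev product estimates combined with the bootstrap assumption give $\|\mathcal N(s)\|_{L^2}\le CK^2\delta^2 e^{2\Theta s}$. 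Using the semigroup bound $\|e^{tL}\|_{L^2\to L^2}\le C e^{\Theta t}$ that comes from the spectral analysis underlying Theorem \ref{l 1.3}, Duhamel's formula
\[
(\rho^d,\mathbf u^d)(t)=\int_0^t e^{(t-s)L}\mathcal N(s)\,ds
\]
yields the key refined estimate
\[
\|(\rho^d,\mathbf u^d)(t)\|_{L^2}\le C\delta^2\!\int_0^t\! e^{\Theta(t-s)}e^{2\Theta s}\,ds\le \frac{C}{\Theta}\delta^2 e^{2\Theta t},\qquad t\in[0,T^*].
\]
A parallel but cruder $H^3$ bound on $(\rho^d,\mathbf u^d)$, propagated by viscosity for $\mathbf u$ and by a transport-type argument for $\tilde\rho$, then improves the bootstrap constant and gives $T^*\ge T^\delta$ once $\delta$ is small enough.

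At $t=T^\delta=\Theta^{-1}\ln(2\epsilon_0/\delta)$ one has $\delta e^{\Theta T^\delta}=2\epsilon_0$, and the lower bound in (\ref{1.19}) (with $\bar\Theta$ chosen small) gives $\|\delta\rho_{\bar\Theta}^l(T^\delta)\|_{L^2}\ge c\epsilon_0$ and similarly for $\mathbf u_{\bar\Theta}^l$, whereas the refined estimate above gives $\|(\rho^d,\mathbf u^d)(T^\delta)\|_{L^2}\le (4C/\Theta)\epsilon_0^2$. Shrinking $\epsilon_0$ so that $(4C/\Theta)\epsilon_0<c/2$, the triangle inequality immediately yields (\ref{1.16}). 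The main obstacle I anticipate is closing the bootstrap in the \emph{unstable} regime: standard $L^2$ or $H^3$ energy identities are not coercive, since the pressure contribution now has the wrong sign, so one cannot simply Gr\"onwall an energy inequality directly. The remedy — and the reason Theorem \ref{l 1.3} is indispensable — is to replace coercivity by the spectral semigroup bound $e^{\Theta t}$ in the low frequencies where the instability lives, while retaining standard viscous estimates in the high frequencies (where the regularised Yukawa term $\gamma\nabla(1-\Delta)^{-1}\tilde\rho$ behaves as a harmless smoothing perturbation, exactly as in Theorem \ref{Theorem 1.1}); separating these regimes with a Littlewood--Paley decomposition and running the $L^2$ Duhamel bound and the higher-order $H^3$ bound in tandem without feedback between them is the core technical step.
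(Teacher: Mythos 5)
Your overall architecture (Guo--Strauss escape-time bootstrap: linear growing mode plus error, refined $L^2$ Duhamel bound, escape time $T^\delta=\Theta^{-1}\ln(2\epsilon_0/\delta)$, triangle inequality at $T^\delta$) is the same as the paper's. The genuine gap is the closure of your bootstrap $\|(\rho-\bar\rho,\mathbf u)(s)\|_{H^3}\le K\delta e^{\Theta s}$, precisely the step you leave vague (``a parallel but cruder $H^3$ bound \ldots propagated by viscosity for $\mathbf u$ and a transport-type argument for $\tilde\rho$'', with high frequencies handled ``exactly as in Theorem \ref{Theorem 1.1}''). Two concrete problems. First, a Duhamel estimate for the error at the $H^3$ level loses derivatives: the quasilinear terms $k(\tilde\rho)\Delta\mathbf u$ and $k(\tilde\rho)\nabla\mathrm{div}\,\mathbf u$ force $\|\mathbf u\|_{H^5}$ into any bound for $\|\mathcal N\|_{H^3}$; this is exactly the obstruction (the term $\rho\nabla^2\mathbf u$) that the paper identifies as the reason the naive argument in the unknowns $(\rho,\mathbf u)$ fails in the $H^3$ framework. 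Second, your fallback of ``standard viscous estimates in the high frequencies'' is not available here: under $P'(\bar\rho)+\gamma\bar\rho<0$ one may have $P'(\bar\rho)<0$ (this is forced whenever $\gamma\ge0$), and then Lemma \ref{l 4.1} gives $\lambda_+(\xi)\to-P'(\bar\rho)/\eta>0$ as $|\xi|\to\infty$, so the high frequencies are themselves exponentially unstable. The high-frequency damping of the density in Theorem \ref{Theorem 1.1} comes from the effective-flux coefficient $\frac{\alpha_1\alpha_2}{\alpha_3+\alpha_4}$, which requires $\alpha_1=P'(\bar\rho)/\bar\rho>0$; it is not merely a matter of the Yukawa term being a smoothing perturbation. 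So the core technical step of your plan, as described, fails on part of the parameter range covered by the theorem.

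The paper circumvents both issues differently: it never propagates $H^3$ at the sharp rate, but only keeps $\mathcal E(t)\le\delta_0$ via the non-coercive energy inequality of Proposition \ref{l 4.3}, $\mathcal E^2(t)\le\mathcal E^2(0)+C\int_0^t\|(\rho-\bar\rho,\mathbf u)\|_{L^2}^2\,d\tau$; and it runs the refined $L^2$ Duhamel estimate on the conservative unknowns $(\rho,\mathbf m)$, whose quadratic nonlinearity is in divergence form, so that one derivative lands on the Green's function and the remainder is bounded by $\|(\rho,\mathbf m)\|_{L^2}^{7/6}\|\nabla^3(\rho,\mathbf m)\|_{L^2}^{5/6}$ --- strictly more than one power of the exponentially controlled $L^2$ quantity, which is what defeats the $t\,e^{\Theta t}$ loss without your stronger $H^3$ bootstrap. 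Your scheme could be repaired by importing an inequality of the type of Proposition \ref{l 4.3} (with it, your $H^3$ bootstrap does close, since its right-hand side only involves the $L^2$ norm), but as written that inequality is neither stated nor proved, and the frequency-splitting substitute rests on an incorrect premise. A smaller point: you assert $\|e^{tL}\|_{L^2\to L^2}\le Ce^{\Theta t}$ on all of frequency space, while the paper's (\ref{4.6}) is stated only on an annulus $\eta_1\le|\xi|\le\eta_2$; since $A(\xi)$ is not normal, the uniformity of the constant (eigenvalue collisions, the limit $|\xi|\to\infty$) needs to be checked before applying it to the non-frequency-localized nonlinear forcing.
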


\begin{remark}The critical case $P'(\bar\rho)+\gamma\bar\rho=0$ is naturally interesting, and its stability or instability will be considered in future.
\end{remark}

\noindent{\rm\bf Notations: }
The letter $C$ stands for a generic positive constant whose meaning is clear from the context.
We write $f\lesssim g$ instead of $f\leq Cg$. For operators $A$ and $B$, we denote the commutator $[A,B]=AB-BA$.

The remainder of the paper is organized as follows.  In Section 2, we mainly give the a $priori$ estimate of the solution in $L^p$-framework, which together with local existence gives the global existence. In Section 3, we derive the $L^p$-decay rate of the solution given in Section 2 under the initial assumption in some negative Besov space. We study the instability of the system in Section 4. Finally, we recall the Littlewood-Paley decomposition, Besov spaces and related analysis tools in Appendix.

\section{The Proof of Global Existence}

\quad\quad In this section, we will divide into four subsections to verify the global existence in Theorem \ref{Theorem 1.1}. In the first two subsections, we show the estimate in the low frequency. In the last two subsections, we present the estimate in the high frequency. Then, we eventually acquire the global existence in Theorem \ref{Theorem 1.1}.

Recall the system \eqref{1.5(0)}:
\begin{equation}\label{2.1}
	\begin{cases}
		\partial_t\tilde{\rho}+\alpha_2{\rm div}{\mathbf{u}}=N_1,\\
		\partial_t\mathbf{u}-\alpha_3\Delta \mathbf{u}-\alpha_4\nabla{\rm div}\mathbf{u}+\alpha_1\nabla \tilde{\rho}+\gamma\frac{\nabla}{1-\Delta}\tilde{\rho}=N_2,\\
	\end{cases}
\end{equation}
where
\begin{equation}\label{2.2}
	\begin{split}
		N_1:=&-\mathbf{u}\cdot\nabla \tilde{\rho}-\tilde{\rho}{\rm div}\mathbf{u},\\
		N_2:=&-\mathbf{u}\cdot\nabla \mathbf{u}-F(\tilde{\rho})\nabla \tilde{\rho}-\alpha_3k(\tilde{\rho})\Delta \mathbf{u}-\alpha_4k(\tilde{\rho})\nabla{\rm div}\mathbf{u}.\\
	\end{split}
\end{equation}
Besides, for convenience, we define
\begin{equation*}
	\begin{split}
		\mathscr{E}_\infty(t):=&\|(\tilde{\rho},\mathbf{u})\|^l_{\dot{B}^{\frac{d}{2}-1}_{2,1}}+\|\tilde{\rho}\|^h_{\dot{B}^{\frac{d}{p}}_{p,1}}+\|\mathbf{u}\|^h_{\dot{B}^{\frac{d}{p}-1}_{p,1}},\\
		\mathscr{E}_1(t):=&\|(\tilde{\rho},\mathbf{u})\|^l_{\dot{B}^{\frac{d}{2}+1}_{2,1}}+\|\tilde{\rho}\|^h_{\dot{B}^{\frac{d}{p}}_{p,1}}+\|\mathbf{u}\|^h_{\dot{B}^{\frac{d}{p}+1}_{p,1}}.		
	\end{split}
\end{equation*}

\subsection{The estimate of $\mathbb{P}\mathbf{u}$ in the low frequency}
\quad\quad Firstly, we estimate $\mathbb{P}\mathbf{u}$ in the low frequency. We handle the second equation in \eqref{2.1} by taking the operator  $\mathbb{P}$ and get
\begin{equation*}
	\partial_t\mathbb{P}\mathbf{u}-\alpha_3\Delta\mathbb{P}\mathbf{u}=\mathbb{P}N_2.
\end{equation*}
Applying $\dot{\Delta}_j$ to the above equation, and taking the $L^2$ inner product with $\dot{\Delta}_j\mathbb{P}\mathbf{u}$ gives
\begin{equation}\label{2.3}
	\frac{1}{2}\frac{d}{dt}\|\dot{\Delta}_j\mathbb{P}\mathbf{u}\|^2_{L^2}+c\alpha_32^{2j}\|\dot{\Delta}_j\mathbb{P}\mathbf{u}\|^2_{L^2}\lesssim\|\dot{\Delta}_j\mathbb{P}N_2\|_{L^2}\|\dot{\Delta}_j\mathbb{P}\mathbf{u}\|_{L^2},
\end{equation}
where the term $c2^{2j}\|\dot{\Delta}_j\mathbb{P}\mathbf{u}\|^2_{L^2}$ is produced by Bernstein's inequality: there exists a positive constant $c$ so that
\begin{equation*}
	-\int_{\mathbb{R}^d}\Delta\dot{\Delta}_j\mathbb{P}\mathbf{u}\cdot \dot{\Delta}_j\mathbb{P}\mathbf{u}dx\ge c2^{2j}\|\dot{\Delta}_j\mathbb{P}\mathbf{u}\|^2_{L^2}.
\end{equation*}
After that, multiplying by $1/\|\dot{\Delta}_j\mathbb{P}\mathbf{u}\|_{L^2}2^{j(\frac{d}{2}-1)}$ on both hand side of \eqref{2.3} and integrating the inequality from 0 to $t$, we can acquire by summing up about $j\le j_0$ that
\begin{equation}\label{2.4}
	\|\mathbb{P}\mathbf{u}\|^l_{\widetilde{L}^\infty_{t}(\dot{B}^{\frac{d}{2}-1}_{2,1})}+\alpha_3\|\mathbb{P}\mathbf{u}\|^l_{L^1_{t}(\dot{B}^{\frac{d}{2}+1}_{2,1})}
	\lesssim
	\|\mathbb{P}\mathbf{u}_0\|^l_{\dot{B}^{\frac{d}{2}-1}_{2,1}}+\|N_2\|^l_{L^1_{t}(\dot{B}^{\frac{d}{2}-1}_{2,1})}.
\end{equation}
Now, we start to handle the terms in $N_2$ one by one. For the expression of $N_2$, we have
\begin{equation}\label{2.5}
	\begin{split}
		\|{N_2}\|^l_{\dot{B}^{\frac{d}{2}-1}_{2,1}}\lesssim&\|\mathbf{u}\cdot\nabla \mathbf{u}\|^l_{\dot{B}^{\frac{d}{2}-1}_{2,1}}+\|F(\tilde{\rho})\nabla \tilde{\rho}\|^l_{\dot{B}^{\frac{d}{2}-1}_{2,1}}\\
		&+\|k(\tilde{\rho})\Delta \mathbf{u}\|^l_{\dot{B}^{\frac{d}{2}-1}_{2,1}}+\|k(\tilde{\rho})\nabla{\rm div}\mathbf{u}\|^l_{\dot{B}^{\frac{d}{2}-1}_{2,1}}.
	\end{split}
\end{equation}
For the term $\|\mathbf{u}\cdot\nabla \mathbf{u}\|^l_{\dot{B}^{\frac{d}{2}-1}_{2,1}}$, using Bony decomposition, we have
\begin{equation*}
	\mathbf{u}\cdot \nabla \mathbf{u}=T_\mathbf{u}\nabla \mathbf{u} +R(\mathbf{u},\nabla \mathbf{u})+T_{\nabla \mathbf{u}}\mathbf{u}.
\end{equation*}
According to embedding theorem of Proposition \ref{A.5}, interpolation theorem of Proposition \ref{interpolation} and Proposition \ref{A.11}, we can get
\begin{equation}\label{2.6}
	\begin{split}
		\|T_\mathbf{u}\nabla \mathbf{u}+R(\mathbf{u},\nabla \mathbf{u})\|^l_{\dot{B}^{\frac{d}{2}-1}_{2,1}}
		\lesssim&\|\mathbf{u}\|_{\dot{B}^{\frac{d}{p}-1}_{p,1}}\|\nabla \mathbf{u}\|_{\dot{B}^{\frac{d}{p}}_{p,1}}
		\lesssim\|\mathbf{u}\|_{\dot{B}^{\frac{d}{p}-1}_{p,1}}\|\mathbf{u}\|_{\dot{B}^{\frac{d}{p}+1}_{p,1}}\\
		\lesssim&(\|\mathbf{u}\|^l_{\dot{B}^{\frac{d}{p}-1}_{p,1}}+\|\mathbf{u}\|^h_{\dot{B}^{\frac{d}{p}-1}_{p,1}})(\|\mathbf{u}\|^l_{\dot{B}^{\frac{d}{p}+1}_{p,1}}+\|\mathbf{u}\|^h_{\dot{B}^{\frac{d}{p}+1}_{p,1}})\\
		\lesssim&(\|\mathbf{u}\|^l_{\dot{B}^{\frac{d}{2}-1}_{2,1}}\!+\!\|\mathbf{u}\|^h_{\dot{B}^{\frac{d}{p}-1}_{p,1}})(\|\mathbf{u}\|^l_{\dot{B}^{\frac{d}{2}+1}_{2,1}}\!+\!\|\mathbf{u}\|^h_{\dot{B}^{\frac{d}{p}+1}_{p,1}})
		\lesssim\mathscr{E}_\infty(t)\mathscr{E}_1(t),
	\end{split}
\end{equation}
and
\begin{equation}\label{2.7}
	\begin{split}
		\|T_{\nabla \mathbf{u}}\mathbf{u}\|^l_{\dot{B}^{\frac{d}{2}-1}_{2,1}}
		\lesssim&\|\nabla \mathbf{u}\|_{\dot{B}^{\frac{d}{p}-1}_{p,1}}\|\mathbf{u}\|_{\dot{B}^{\frac{d}{p}}_{p,1}}
		\lesssim\|\mathbf{u}\|^2_{\dot{B}^{\frac{d}{p}}_{p,1}}
		\lesssim\|\mathbf{u}\|_{\dot{B}^{\frac{d}{p}-1}_{p,1}}\|\mathbf{u}\|_{\dot{B}^{\frac{d}{p}+1}_{p,1}}\\
		\lesssim&(\|\mathbf{u}\|^l_{\dot{B}^{\frac{d}{2}-1}_{2,1}}+\|\mathbf{u}\|^h_{\dot{B}^{\frac{d}{p}-1}_{p,1}})(\|\mathbf{u}\|^l_{\dot{B}^{\frac{d}{2}+1}_{2,1}}+\|\mathbf{u}\|^h_{\dot{B}^{\frac{d}{p}+1}_{p,1}})
		\lesssim\mathscr{E}_\infty(t)\mathscr{E}_1(t).
	\end{split}
\end{equation}
Combining \eqref{2.6} and \eqref{2.7} gives
\begin{equation}\label{2.8}
	\|\mathbf{u}\cdot \nabla \mathbf{u}\|^l_{\dot{B}^{\frac{d}{2}-1}_{2,1}}\lesssim\mathscr{E}_\infty(t)\mathscr{E}_1(t).
\end{equation}
For the term $\|F(\tilde{\rho})\nabla\tilde{\rho}\|^l_{\dot{B}^{\frac{d}{2}-1}_{2,1}}$, we have
\begin{equation}
	\|F(\tilde{\rho})\nabla\tilde{\rho}\|^l_{\dot{B}^{\frac{d}{2}-1}_{2,1}}\lesssim\|F(\tilde{\rho})\nabla\tilde{\rho}^l\|^l_{\dot{B}^{\frac{d}{2}-1}_{2,1}}+\|F(\tilde{\rho})\nabla\tilde{\rho}^h\|^l_{\dot{B}^{\frac{d}{2}-1}_{2,1}}.
\end{equation}
First, to deal with the first term of the right-hand side, we use Proposition \ref{A.7} and Proposition \ref{A.13} and acquire that
\begin{equation}
	\begin{split}
		\|F(\tilde{\rho})\nabla\tilde{\rho}^l\|^l_{\dot{B}^{\frac{d}{2}-1}_{2,1}}\lesssim&\|F(\tilde{\rho})\|_{\dot{B}^{\frac{d}{p}}_{p,1}}\|\nabla\tilde{\rho}^l\|_{\dot{B}^{\frac{d}{2}-1}_{2,1}}
		\lesssim\|\tilde{\rho}\|_{\dot{B}^{\frac{d}{p}}_{p,1}}\|\tilde{\rho}^l\|_{\dot{B}^{\frac{d}{2}}_{2,1}}\\
		\lesssim&\|\tilde{\rho}^l\|_{\dot{B}^{\frac{d}{2}}_{2,1}}\|\tilde{\rho}^l\|_{\dot{B}^{\frac{d}{2}}_{2,1}}+\|\tilde{\rho}^h\|_{\dot{B}^{\frac{d}{p}}_{p,1}}\|\tilde{\rho}^l\|_{\dot{B}^{\frac{d}{2}}_{2,1}}\\
		\lesssim&\|\tilde{\rho}^l\|_{\dot{B}^{\frac{d}{2}-1}_{2,1}}\|\tilde{\rho}^l\|_{\dot{B}^{\frac{d}{2}+1}_{2,1}}+\|\tilde{\rho}^h\|^{\frac{1}{2}}_{\dot{B}^{\frac{d}{p}}_{p,1}}\|\tilde{\rho}^h\|^{\frac{1}{2}}_{\dot{B}^{\frac{d}{p}}_{p,1}}\|\tilde{\rho}^l\|^{\frac{1}{2}}_{\dot{B}^{\frac{d}{2}-1}_{2,1}}\|\tilde{\rho}^l\|^{\frac{1}{2}}_{\dot{B}^{\frac{d}{2}+1}_{2,1}}\\
		\lesssim&\mathscr{E}_\infty(t)\mathscr{E}_1(t)+(\mathscr{E}_\infty(t))^{\frac{1}{2}}(\mathscr{E}_1(t))^{\frac{1}{2}}(\mathscr{E}_\infty(t))^{\frac{1}{2}}(\mathscr{E}_1(t))^{\frac{1}{2}}\lesssim\mathscr{E}_\infty(t)\mathscr{E}_1(t).
	\end{split}
\end{equation}
For the second term, we use Bony decomposition again and then it follows that
\begin{equation}
	F(\tilde{\rho})\nabla\tilde{\rho}^h=T_{F(\tilde{\rho})}\nabla\tilde{\rho}^h +R(F(\tilde{\rho}),\nabla\tilde{\rho}^h)+T_{\nabla\tilde{\rho}^h}F(\tilde{\rho}),
\end{equation}
\begin{equation}
	\begin{split}
		\|T_{F(\tilde{\rho})}\nabla\tilde{\rho}^h\|^l_{\dot{B}^{\frac{d}{2}-1}_{2,1}}\lesssim&\|T_{F(\tilde{\rho})}\nabla\tilde{\rho}^h\|^l_{\dot{B}^{\frac{d}{2}-2}_{2,1}}		\lesssim\|F(\tilde{\rho})\|_{\dot{B}^{\frac{d}{p}-1}_{p,1}}\|\nabla\tilde{\rho}^h\|_{\dot{B}^{\frac{d}{p}-1}_{p,1}}\\
		\lesssim&(1+\|\tilde{\rho}\|_{\dot{B}^{\frac{d}{p}}_{p,1}})\|\tilde{\rho}\|_{\dot{B}^{\frac{d}{p}-1}_{p,1}}\|\tilde{\rho}^h\|_{\dot{B}^{\frac{d}{p}}_{p,1}}
		\lesssim(1+\|\tilde{\rho}\|_{\dot{B}^{\frac{d}{p}}_{p,1}})\|\tilde{\rho}\|_{\dot{B}^{\frac{d}{p}-1}_{p,1}}\|\tilde{\rho}^h\|_{\dot{B}^{\frac{d}{p}}_{p,1}}\\
		\lesssim&(1+\mathscr{E}_\infty(t))\mathscr{E}_\infty(t)\mathscr{E}_1(t)
		\lesssim\mathscr{E}_\infty(t)\mathscr{E}_1(t),\ \ \ (\mathscr{E}_\infty(t)<<1),
	\end{split}
\end{equation}
and
\begin{equation}
	\begin{split}
		&\|R(F(\tilde{\rho}),\nabla\tilde{\rho}^h)+T_{\nabla\tilde{\rho}^h}F(\tilde{\rho})\|^l_{\dot{B}^{\frac{d}{2}-1}_{2,1}}\\
\lesssim&\|\nabla\tilde{\rho}^h\|_{\dot{B}^{\frac{d}{p}-1}_{p,1}}\|F(\tilde{\rho})\|_{\dot{B}^{\frac{d}{p}}_{p,1}}
		\lesssim\|\tilde{\rho}^h\|_{\dot{B}^{\frac{d}{p}}_{p,1}}\|\tilde{\rho}\|_{\dot{B}^{\frac{d}{p}}_{p,1}}
		\lesssim\mathscr{E}_\infty(t)\mathscr{E}_1(t).
	\end{split}
\end{equation}
Hence, the term $\|F(\tilde{\rho})\cdot\nabla\tilde{\rho}\|^l_{\dot{B}^{\frac{d}{2}-1}_{2,1}}$ can be bounded as
\begin{equation}\label{2.23}
	\|F(\tilde{\rho})\cdot\nabla\tilde{\rho}\|^l_{\dot{B}^{\frac{d}{2}-1}_{2,1}}\lesssim\mathscr{E}_\infty(t)\mathscr{E}_1(t).
\end{equation}
As for  the last two terms $\|k(\tilde{\rho})\Delta \mathbf{u}\|^l_{\dot{B}^{\frac{d}{2}-1}_{2,1}}$ and $\|k(\tilde{\rho})\nabla{\rm div}\mathbf{u}\|^l_{\dot{B}^{\frac{d}{2}-1}_{2,1}}$, it is similar to use Bony decomposition and get
\begin{equation}\label{2.24}
	\|k(\tilde{\rho})\Delta \mathbf{u}\|^l_{\dot{B}^{\frac{d}{2}-1}_{2,1}}+\|k(\tilde{\rho})\nabla{\rm div}\mathbf{u}\|^l_{\dot{B}^{\frac{d}{2}-1}_{2,1}}\lesssim\mathscr{E}_\infty(t)\mathscr{E}_1(t).
\end{equation}
Plugging \eqref{2.8}, \eqref{2.23} and \eqref{2.24} into \eqref{2.5}, then $\|N_2\|^l_{\dot{B}^{\frac{d}{2}-1}_{2,1}}$ can be estimated as
\begin{equation}\label{2.25}
	\|N_2\|^l_{\dot{B}^{\frac{d}{2}-1}_{2,1}}\lesssim\mathscr{E}_\infty(t)\mathscr{E}_1(t).
\end{equation}
Consequently, inserting \eqref{2.25} into \eqref{2.4}, we acquire the low frequency $\mathbb{P}\mathbf{u}^l$ as
\begin{equation}\label{2.26}
	\|\mathbb{P}\mathbf{u}\|^l_{\widetilde{L}^\infty_{t}(\dot{B}^{\frac{d}{2}-1}_{2,1})}+\alpha_3\|\mathbb{P}\mathbf{u}\|^l_{L^1_{t}(\dot{B}^{\frac{d}{2}+1}_{2,1})}\\
	\lesssim \|\mathbb{P}\mathbf{u}_0\|^l_{\dot{B}^{\frac{d}{2}-1}_{2,1}}+\int_{0}^{t}\mathscr{E}_\infty(\tau)\mathscr{E}_1(\tau)d\tau.
\end{equation}

\subsection{The estimate of $(\tilde{\rho},\mathbf{u})$ in the low frequency}
\quad\quad To derive the estimate of $(\tilde{\rho},\mathbf{u})$ in the low frequency, we shall use the Green function method. Thus, for the system \eqref{2.1}:
\begin{equation}
	\begin{cases}
		\partial_t\tilde{\rho}+\bar{\rho}{\rm div}{\mathbf{u}}=N_1,\\
		\partial_t\mathbf{u}-\frac{\mu}{\bar{\rho}}\Delta \mathbf{u}-\frac{\mu+\mu'}{\bar{\rho}}\nabla{\rm div}\mathbf{u}+\frac{P'(\bar{\rho})}{\bar{\rho}}\nabla \tilde{\rho}+\gamma\frac{\nabla}{1-\Delta}\tilde{\rho}=N_2,\\
	\end{cases}
\end{equation}
by dividing the term $\frac{\nabla}{1-\Delta}\tilde{\rho}$ into $\nabla \tilde{\rho}$ and $\frac{\Delta}{1-\Delta}\nabla\tilde{\rho}$ to get
\begin{equation}\label{2.19}
	\begin{cases}
		\partial_t\tilde{\rho}+\bar{\rho}{\rm div}{\mathbf{u}}=0,\\
		\partial_t\mathbf{u}-\frac{\mu}{\bar{\rho}}\Delta \mathbf{u}-\frac{\mu+\mu'}{\bar{\rho}}\nabla{\rm div}\mathbf{u}+(\frac{P'(\bar{\rho})}{\bar{\rho}}+\gamma)\nabla \tilde{\rho}+\gamma\frac{\Delta}{1-\Delta}\nabla\tilde{\rho}=0.
	\end{cases}
\end{equation}
Applying operators $\mathcal{P}$ and $ \mathcal{Q} $ to the second equation and setting $\eta:=\frac{2\mu+\mu'}{\bar{\rho}}$, it follows that
\begin{equation}
	\begin{cases}
		\partial_t\tilde{\rho}+\bar{\rho}{\rm div}\mathcal{Q}\mathbf{u}=0,\\
		\partial_t\mathbf{u}-\eta\Delta \mathcal{Q}\mathbf{u}+(\frac{P'(\bar{\rho})}{\bar{\rho}}+\gamma)\mathcal{Q}\nabla \tilde{\rho}+\gamma\frac{\Delta}{1-\Delta}\mathcal{Q}\nabla\tilde{\rho}=0,\\
		\partial_t\mathcal{P}\mathbf{u}-\eta\Delta \mathcal{P}\mathbf{u}=0.\\
	\end{cases}
\end{equation}
According to homogeneous Besov spaces setting, it is equivalent to bound $\mathcal{Q}\mathbf{u}$ or $v=\Lambda^{-1}{\rm div}\mathbf{u}$. So we can consider
\begin{equation}
	\begin{cases}
		\partial_t\tilde{\rho}+\bar{\rho}\Lambda v=0,\\
		\partial_tv-\eta\Delta v-(\frac{P'(\bar{\rho})}{\bar{\rho}}+\gamma)\Lambda \tilde{\rho}-\gamma\frac{\Delta}{1-\Delta}\Lambda\tilde{\rho}=0,\\
		\partial_t\mathcal{P}\mathbf{u}-\eta\Delta \mathcal{P}\mathbf{u}=0.\\
	\end{cases}
\end{equation}
We just focus on the second and third equation of (2.21), namely
\begin{equation}
	\begin{cases}
		\partial_t\tilde{\rho}+\bar{\rho}\Lambda v=0,\\
		\partial_tv-\eta\Delta v-(\frac{P'(\bar{\rho})}{\bar{\rho}}+\gamma)\Lambda \tilde{\rho}-\gamma\frac{\Delta}{1-\Delta}\Lambda\tilde{\rho}=0.\\
	\end{cases}
\end{equation}
Now, taking the Fourier transform with respect to the space variable yields
\begin{equation}
	\frac{d}{dt}
	\left(
	\begin{array}{c}
		\widehat{\tilde{\rho}}\\
		\widehat{v}\\
	\end{array}\right)=A(\xi)
	\left(
	\begin{array}{c}
		\widehat{\tilde{\rho}}\\
		\widehat{v}\\
	\end{array}
	\right)
	\ \ {\rm with}\ \ A(\xi):=
	\left(
	\begin{array}{cc}
		0&-\bar{\rho}|\xi|\\
		(\frac{P'(\bar{\rho})}{\bar{\rho}}+\gamma)|\xi|-\gamma\frac{|\xi|^2}{1+|\xi|^2}|\xi|&-\eta|\xi|^2\\
	\end{array}
	\right).
\end{equation}
Based on a serious of calculations, we get the characteristic polynomial of $A(\xi)$ is
\begin{equation}\label{2.24}
	\lambda^2+\eta|\xi|^2\lambda+\bar{\rho}|\xi|^2\big(\frac{P'(\bar{\rho})}{\bar{\rho}}+\gamma-\gamma\frac{|\xi|^2}{1+|\xi|^2}\big)=0,
\end{equation}
and the discriminant of which is
\begin{equation}
	\delta(\xi)=|\xi|^2\left[\eta^2|\xi|^2-4\bar{\rho}\Big(\frac{P'(\bar{\rho})}{\bar{\rho}}+\gamma-\gamma\frac{|\xi|^2}{1+|\xi|^2}\Big)\right].
\end{equation}


For the low frequency regime, we need the stability condition  $P'(\bar{\rho})+\gamma\bar{\rho}>0$ and then consider two cases: $|\xi|\le\sqrt{\frac{2(P'(\bar{\rho})+\gamma\bar{\rho})}{\eta^2}}$ when $\gamma\leq0$, or $|\xi|\le \min \Big(\sqrt{\frac{2(P'(\bar{\rho})+\gamma\bar{\rho})}{\eta^2}},\sqrt{\frac{P'(\bar{\rho})+\gamma\bar{\rho}}{2\gamma\bar{\rho}}}\Big)$ when $\gamma>0$. Then, for both of two cases, there are two distinct complex conjugated eigenvalues to be presented, so we finally have
\begin{equation}
	\lambda_{\pm}(\xi)=-\frac{\eta|\xi|^2}{2}(1\pm iS(\xi)),
\end{equation}
where
\begin{equation}
	S(\xi)=\sqrt{\frac{4\bar{\rho}|\xi|^2(\frac{P'(\bar{\rho})}{\bar{\rho}}+\gamma-\gamma\frac{|\xi|^2}{1+|\xi|^2})-\eta^2|\xi|^4}{\eta^2|\xi|^4}}
	=\sqrt{\frac{4\bar{\rho}(\frac{P'(\bar{\rho})}{\bar{\rho}}+\gamma-\gamma\frac{|\xi|^2}{1+|\xi|^2})}{\eta^2|\xi|^2}-1}.
\end{equation}
Through a serious of computations, we can acquire
\begin{equation}
	\begin{split}
		\widehat{\tilde{\rho}}(t,\xi)=&e^{t\lambda_-(\xi)}\left(\frac{1}{2}(1+\frac{i}{S(\xi)})\widehat{\tilde{\rho}}_0(\xi)-\frac{i}{\eta|\xi|S(\xi)}\widehat{v}_0(\xi)\right)\\
		&+e^{t\lambda_+(\xi)}\left(\frac{1}{2}(1-\frac{i}{S(\xi)})\widehat{\tilde{\rho}}_0(\xi)+\frac{i}{\eta|\xi|S(\xi)}\widehat{v}_0(\xi)\right),
	\end{split}
\end{equation}
\begin{equation}
	\begin{split}
		\widehat{v}(t,\xi)=&e^{t\lambda_-(\xi)}\left(\frac{i}{\eta|\xi|S(\xi)}\widehat{\tilde{\rho}}_0(\xi)+\frac{1}{2}(1-\frac{i}{S(\xi)})\widehat{v}_0(\xi)\right)\\
		&+e^{t\lambda_+(\xi)}\left(-\frac{i}{\eta|\xi|S(\xi)}\widehat{\tilde{\rho}}_0(\xi)+\frac{1}{2}(1+\frac{i}{S(\xi)})\widehat{v}_0(\xi)\right).
	\end{split}
\end{equation}
Let $\xi\to0$, we write
\begin{equation}
	\begin{split}
		&\widehat{\tilde{\rho}}(t,\xi)\sim\frac{1}{2}e^{t\lambda_-(\xi)}\left(\widehat{\tilde{\rho}}_0(\xi)-i\widehat{v}_0(\xi)\right)+\frac{1}{2}e^{t\lambda_+(\xi)}\left(\widehat{\tilde{\rho}}_0(\xi)+i\widehat{v}_0(\xi)\right),\\
		&\widehat{v}(t,\xi)\sim\frac{1}{2}e^{t\lambda_-(\xi)}\left(i\widehat{\tilde{\rho}}_0(\xi)+\widehat{v}_0(\xi)\right)+\frac{1}{2}e^{t\lambda_+(\xi)}\left(-i\widehat{\tilde{\rho}}_0(\xi)+\widehat{v}_0(\xi)\right).
	\end{split}
\end{equation}
Therefore, the low frequencies of $\tilde{\rho}$ and $v$ have a similar behavior. As $|e^{t\lambda_{\pm}(\xi)}|=e^{-\eta t|\xi|^2/2}$ applying Parseval's formula and gives
\begin{equation}
	\|(\dot{\Delta}_j\tilde{\rho},\dot{\Delta}_jv)(t)\|_{L^2}\le Ce^{-c\eta t 2^{2j}}\|(\dot{\Delta}_j\tilde{\rho}_0,\dot{\Delta}_jv_0)\|_{L^2}.
\end{equation}
And then, we can acquire the following proposition by putting the estimate for the above inequality and using Duhamel's formula:
\begin{proposition}\label{pro2.1}
	Let $s\in \mathbb{R}$. Assume that $(\tilde{\rho},\mathbf{u})$ satisfies
	\begin{equation}
		\begin{cases}
			\partial_t\tilde{\rho}+\bar{\rho}{\rm div}{\mathbf{u}}=N_1,\\
			\partial_t\mathbf{u}-\frac{\mu}{\bar{\rho}}\Delta \mathbf{u}-\frac{\mu+\mu'}{\bar{\rho}}\nabla{\rm div}\mathbf{u}+(\frac{P'(\bar{\rho})}{\bar{\rho}}+\gamma)\nabla \tilde{\rho}+\gamma\frac{\Delta}{1-\Delta}\nabla\tilde{\rho}=N_2.\\
		\end{cases}
	\end{equation}
Then we have for the low frequencies:
\begin{equation}
	\|(\tilde{\rho},\mathbf{u})\|^l_{\widetilde{L}^\infty_{t}(\dot{B}^s_{2,1})}+\eta\|(\tilde{\rho},\mathbf{u})\|^l_{L^1_{t}(\dot{B}^{s+2}_{2,1})}
\lesssim\|(\tilde{\rho}_0,\mathbf{u}_0)\|^l_{\dot{B}^s_{2,1}}+\|(N_1,N_2)\|^l_{L^1_{t}(\dot{B}^s_{2,1})}.
\end{equation}
\end{proposition}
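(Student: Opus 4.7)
\noindent\textbf{Proof proposal for Proposition \ref{pro2.1}.} The plan is to exploit the spectral analysis already carried out in the preceding pages, combine it with Duhamel's formula on each dyadic block, and then recombine the compressible and incompressible parts. Throughout, I work only with blocks $\dot\Delta_j$ for $j\le j_0$, so that $|\xi|$ is small enough that the stability condition $P'(\bar\rho)+\gamma\bar\rho>0$ guarantees the complex-conjugate eigenvalues $\lambda_\pm(\xi)=-\tfrac{\eta|\xi|^2}{2}(1\pm iS(\xi))$ computed above.

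First I would perform the Hodge decomposition $\mathbf{u}=\mathcal{P}\mathbf{u}+\mathcal{Q}\mathbf{u}$ and set $v:=\Lambda^{-1}\mathrm{div}\,\mathbf{u}$. The incompressible component solves $\partial_t\mathcal{P}\mathbf{u}-\eta\Delta\mathcal{P}\mathbf{u}=\mathcal{P}N_2$ (because $\mathcal{P}$ kills the gradient and Bessel-potential terms), so the standard maximal-regularity estimate for the heat semigroup on low frequencies yields
\[
\|\mathcal{P}\mathbf{u}\|^l_{\widetilde L^\infty_t(\dot B^s_{2,1})}+\eta\|\mathcal{P}\mathbf{u}\|^l_{L^1_t(\dot B^{s+2}_{2,1})}\lesssim \|\mathcal{P}\mathbf{u}_0\|^l_{\dot B^s_{2,1}}+\|\mathcal{P}N_2\|^l_{L^1_t(\dot B^s_{2,1})}.
\]
For the coupled system on $(\tilde\rho,v)$, the explicit representation written just before the statement, together with $|e^{t\lambda_\pm(\xi)}|=e^{-\eta t|\xi|^2/2}$, gives the blockwise bound
\[
\|(\dot\Delta_j\tilde\rho,\dot\Delta_j v)(t)\|_{L^2}\lesssim e^{-c\eta t\,2^{2j}}\|(\dot\Delta_j\tilde\rho_0,\dot\Delta_j v_0)\|_{L^2},\qquad j\le j_0,
\]
already recorded in the excerpt.

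Next I would apply Duhamel's formula to the inhomogeneous version, which for each $j\le j_0$ yields
\[
\|(\dot\Delta_j\tilde\rho,\dot\Delta_j v)(t)\|_{L^2}\lesssim e^{-c\eta t\,2^{2j}}\|(\dot\Delta_j\tilde\rho_0,\dot\Delta_j v_0)\|_{L^2}+\int_0^t e^{-c\eta(t-\tau)2^{2j}}\|(\dot\Delta_jN_1,\dot\Delta_jN_2)(\tau)\|_{L^2}\,d\tau.
\]
Multiplying by $2^{js}$, taking $\sup_{t\ge 0}$ and $\int_0^\infty\,dt$ respectively, using Young's inequality in time on the convolution with $e^{-c\eta t\,2^{2j}}$ (which produces a factor of $2^{-2j}$ in the $L^1_t$-norm and $1$ in the $L^\infty_t$-norm), and finally summing over $j\le j_0$ with weights $2^{js}$ and $2^{j(s+2)}$, I obtain
\[
\|(\tilde\rho,v)\|^l_{\widetilde L^\infty_t(\dot B^s_{2,1})}+\eta\|(\tilde\rho,v)\|^l_{L^1_t(\dot B^{s+2}_{2,1})}\lesssim\|(\tilde\rho_0,v_0)\|^l_{\dot B^s_{2,1}}+\|(N_1,\mathcal{Q}N_2)\|^l_{L^1_t(\dot B^s_{2,1})},
\]
after absorbing the bounded multipliers $\tfrac{1}{\eta|\xi|S(\xi)}$ etc.\ into the constant. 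Since $\Lambda^{-1}\mathrm{div}$ and $\mathcal{P}$ are $0$-order Fourier multipliers bounded on every $\dot B^\sigma_{2,1}$, combining this with the estimate for $\mathcal{P}\mathbf{u}$ and using $\|\mathbf{u}\|\lesssim\|\mathcal{P}\mathbf{u}\|+\|v\|$ gives exactly the claimed bound.

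The main obstacle is the last step of justification: one must check that the matrix coefficients in the explicit representation of $(\widehat{\tilde\rho},\widehat v)$ stay uniformly bounded on the low-frequency annulus $2^{j-1}\le|\xi|\le 2^{j+1}$ for all $j\le j_0$. The delicate point is that $S(\xi)\sim 2\sqrt{P'(\bar\rho)+\gamma\bar\rho}/(\eta|\xi|)$ blows up as $\xi\to 0$, so the prefactor $\tfrac12(1\pm i/S)$ is harmless, while $\tfrac{1}{\eta|\xi|S(\xi)}\to \tfrac{1}{2\bar\rho}(P'(\bar\rho)+\gamma\bar\rho)^{-1/2}$ is finite precisely because $P'(\bar\rho)+\gamma\bar\rho>0$. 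The perturbative term $\gamma|\xi|^2/(1+|\xi|^2)$ coming from the Yukawa potential is $O(|\xi|^2)$ and is dominated by $P'(\bar\rho)/\bar\rho+\gamma$ on the chosen low-frequency cutoff, so the analysis reduces to the classical Navier--Stokes case with modified sound speed $\sqrt{P'(\bar\rho)/\bar\rho+\gamma}$.
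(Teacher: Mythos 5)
Your proposal follows essentially the same route as the paper: the explicit spectral representation of $(\widehat{\tilde\rho},\widehat v)$ with $|e^{t\lambda_\pm(\xi)}|=e^{-\eta t|\xi|^2/2}$ on the low-frequency blocks, Duhamel's formula blockwise, Young's inequality in time and summation over $j\le j_0$, combined with the heat-equation estimate for $\mathcal{P}\mathbf{u}$, which is exactly how the paper deduces Proposition \ref{pro2.1}. The only blemishes are cosmetic: the limit is $\tfrac{1}{\eta|\xi|S(\xi)}\to\tfrac{1}{2\sqrt{P'(\bar\rho)+\gamma\bar\rho}}$ (no extra factor $\bar\rho^{-1}$), and the incompressible part actually carries the viscosity $\alpha_3=\mu/\bar\rho$ rather than $\eta$ (a slip the paper itself makes), neither of which affects the argument.
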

Now we take $s=\frac{d}{2}-1$ in Proposition \ref{pro2.1} and obtain
\begin{equation}\label{2.34}
	\begin{split}
		&\|(\tilde{\rho},\mathbf{u})\|^l_{\widetilde{L}^\infty_t(\dot{B}^{\frac{d}{2}-1}_{2,1})}+(\alpha_3+\alpha_4)\|(\tilde{\rho},\mathbf{u})\|^l_{L^1_{t}(\dot{B}^{\frac{d}{2}+1}_{2,1})}\\
		\lesssim&\|(\tilde{\rho}_0,\mathbf{u}_0)\|^l_{\dot{B}^{\frac{d}{2}-1}_{2,1}}+\|N_1\|^l_{L^1_{t}(\dot{B}^{\frac{d}{2}-1}_{2,1})}+\|N_2\|^l_{L^1_{t}(\dot{B}^{\frac{d}{2}-1}_{2,1})}.
	\end{split}
\end{equation}
It follows from the expression of $\|N_1\|^l_{\dot{B}^{\frac{d}{2}-1}_{2,1}}$ that
\begin{equation}\label{2.35}
	\|N_1\|^l_{\dot{B}^{\frac{d}{2}-1}_{2,1}}\lesssim\|\tilde{\rho}{\rm div}\mathbf{u}\|^l_{\dot{B}^{\frac{d}{2}-1}_{2,1}}+\|\mathbf{u}\cdot\nabla \tilde{\rho}\|^l_{\dot{B}^{\frac{d}{2}-1}_{2,1}}.
\end{equation}
Bony decomposition gives
\begin{equation}\label{2.40}
	\tilde{\rho}{\rm div}\mathbf{u}=T_{\tilde{\rho}}{\rm div}\mathbf{u}+R(\tilde{\rho},{\rm div}\mathbf{u})+T_{{\rm div}\mathbf{u}}\tilde{\rho}.
\end{equation}
Thus, by using Proposition \ref{A.11}, it can be shown that
\begin{equation}\label{2.41}
	\begin{split}
		\|T_{\tilde{\rho}}{\rm div}\mathbf{u}+R(\tilde{\rho},{\rm div}\mathbf{u})\|^l_{\dot{B}^{\frac{d}{2}-1}_{2,1}}\lesssim&\|\tilde{\rho}\|_{\dot{B}^{\frac{d}{p}-1}_{p,1}}\|{\rm div}\mathbf{u}\|_{\dot{B}^{\frac{d}{p}}_{p,1}}
		\lesssim\|\tilde{\rho}\|_{\dot{B}^{\frac{d}{p}-1}_{p,1}}\|\mathbf{u}\|_{\dot{B}^{\frac{d}{p}+1}_{p,1}}
		\lesssim\mathscr{E}_\infty(t)\mathscr{E}_1(t),
	\end{split}
\end{equation}
and
\begin{equation}\label{2.42}
	\begin{split}
		\|T_{{\rm div}u}\tilde{\rho}\|^l_{\dot{B}^{\frac{d}{2}-1}_{2,1}}\lesssim&\|{\rm div}\mathbf{u}\|_{\dot{B}^{\frac{d}{p}-1}_{p,1}}\|\tilde{\rho}\|_{\dot{B}^{\frac{d}{p}}_{p,1}}
		\lesssim\|\mathbf{u}\|_{\dot{B}^{\frac{d}{p}}_{p,1}}\|\tilde{\rho}\|_{\dot{B}^{\frac{d}{p}}_{p,1}}\\
		\lesssim&\|\mathbf{u}\|^{\frac{1}{2}}_{\dot{B}^{\frac{d}{p}-1}_{p,1}}\|\mathbf{u}\|^{\frac{1}{2}}_{\dot{B}^{\frac{d}{p}+1}_{p,1}}\|\tilde{\rho}\|^{\frac{1}{2}}_{\dot{B}^{\frac{d}{p}-1}_{p,1}}\|\tilde{\rho}\|^{\frac{1}{2}}_{\dot{B}^{\frac{d}{p}+1}_{p,1}}
		\lesssim\mathscr{E}_\infty(t)\mathscr{E}_1(t).
	\end{split}
\end{equation}
From the above inequalities, we can acquire
\begin{equation}\label{2.43}
	\|\tilde{\rho}{\rm div}\mathbf{u}\|^l_{\dot{B}^{\frac{d}{2}-1}_{2,1}}\lesssim\mathscr{E}_\infty(t)\mathscr{E}_1(t).
\end{equation}
As for the term $\mathbf{u}\cdot\nabla \tilde{\rho}$, we handle this term by dividing it into two parts:
\begin{equation}
	\|\mathbf{u}\cdot\nabla \tilde{\rho}\|^l_{\dot{B}^{\frac{d}{2}-1}_{2,1}}\lesssim\|\mathbf{u}\cdot\nabla \tilde{\rho}^l\|^l_{\dot{B}^{\frac{d}{2}-1}_{2,1}}+\|\mathbf{u}\cdot\nabla \tilde{\rho}^h\|^l_{\dot{B}^{\frac{d}{2}-1}_{2,1}}.
\end{equation}
From Proposition \ref{A.7}, $\|\mathbf{u}\cdot\nabla \tilde{\rho}^l\|^l_{\dot{B}^{\frac{d}{2}-1}_{2,1}}$ can be bounded as
\begin{equation}
	\begin{split}
		\|\mathbf{u}\cdot\nabla \tilde{\rho}^l\|^l_{\dot{B}^{\frac{d}{2}-1}_{2,1}}\lesssim&\|\mathbf{u}\|_{\dot{B}^{\frac{d}{p}}_{p,1}}\|\nabla \tilde{\rho}^l\|_{\dot{B}^{\frac{d}{2}-1}_{2,1}}\lesssim\|\mathbf{u}\|_{\dot{B}^{\frac{d}{p}}_{p,1}}\|\tilde{\rho}^l\|_{\dot{B}^{\frac{d}{2}}_{2,1}}\\
		\lesssim&\||\mathbf{u}\|^{\frac{1}{2}}_{\dot{B}^{\frac{d}{p}-1}_{p,1}}\|\mathbf{u}\|^{\frac{1}{2}}_{\dot{B}^{\frac{d}{p}+1}_{p,1}}\|\tilde{\rho}^l\|^{\frac{1}{2}}_{\dot{B}^{\frac{d}{2}-1}_{2,1}}\|\tilde{\rho}^l\|^{\frac{1}{2}}_{\dot{B}^{\frac{d}{2}+1}_{2,1}}
		\lesssim\mathscr{E}_\infty(t)\mathscr{E}_1(t).
	\end{split}
\end{equation}
For the term $\|\mathbf{u}\cdot\nabla \tilde{\rho}^h\|^l_{\dot{B}^{\frac{d}{2}-1}_{2,1}}$, we also use Bony decomposition to have
\begin{equation}
	\mathbf{u}\cdot\nabla \tilde{\rho}^h=T_{\mathbf{u}}\nabla \tilde{\rho}^h+R(\mathbf{u},\nabla \tilde{\rho}^h)+T_{\nabla \tilde{\rho}^h}\mathbf{u}.
\end{equation}
Then Proposition \ref{A.11} leads to
\begin{equation*}\label{2.37}
	\begin{split}
		\|T_{\mathbf{u}}\nabla \tilde{\rho}^h\|^l_{\dot{B}^{\frac{d}{2}-1}_{2,1}}\lesssim&\|T_{\mathbf{u}}\nabla \tilde{\rho}^h\|^l_{\dot{B}^{\frac{d}{2}-2}_{2,1}}\lesssim\|\mathbf{u}\|_{\dot{B}^{\frac{d}{p}-1}_{p,1}}\|\nabla\tilde{\rho}^h\|_{\dot{B}^{\frac{d}{p}-1}_{p,1}}
		\lesssim\|\mathbf{u}\|_{\dot{B}^{\frac{d}{p}-1}_{p,1}}\|\tilde{\rho}^h\|_{\dot{B}^{\frac{d}{p}}_{p,1}}
		\lesssim\mathscr{E}_\infty(t)\mathscr{E}_1(t),
	\end{split}
\end{equation*}
and
\begin{equation*}\label{2.38}
	\begin{split}
		\|R(\mathbf{u},\nabla \tilde{\rho}^h)+T_{\nabla \tilde{\rho}^h}\mathbf{u}\|^l_{\dot{B}^{\frac{d}{2}-1}_{2,1}}\lesssim&\|\nabla \tilde{\rho}^h\|_{\dot{B}^{\frac{d}{p}-1}_{p,1}}\|\mathbf{u}\|_{\dot{B}^{\frac{d}{p}}_{p,1}}
		\lesssim\|\tilde{\rho}^h\|_{\dot{B}^{\frac{d}{p}}_{p,1}}\|\mathbf{u}\|_{\dot{B}^{\frac{d}{p}}_{p,1}}\\
		\lesssim&\|\tilde{\rho}^h\|^{\frac{1}{2}}_{\dot{B}^{\frac{d}{p}}_{p,1}}\|\tilde{\rho}^h\|^{\frac{1}{2}}_{\dot{B}^{\frac{d}{p}}_{p,1}}\|\mathbf{u}\|^{\frac{1}{2}}_{\dot{B}^{\frac{d}{p}-1}_{p,1}}\|\mathbf{u}\|^{\frac{1}{2}}_{\dot{B}^{\frac{d}{p}+1}_{p,1}}
		\lesssim\mathscr{E}_\infty(t)\mathscr{E}_1(t).
	\end{split}
\end{equation*}
Hence, we have
\begin{equation}\label{2.39}
	\|\mathbf{u}\cdot\nabla \tilde{\rho}^h\|^l_{\dot{B}^{\frac{d}{2}-1}_{2,1}}\lesssim\mathscr{E}_\infty(t)\mathscr{E}_1(t).
\end{equation}
As the term $\|N_2\|^l_{\dot{B}^{\frac{d}{2}-1}_{2,1}}$ has been estimated in \eqref{2.25}, we safely arrive at the following result
\begin{equation}\label{2.45}
	\begin{split}		\|(\tilde{\rho},\mathbf{u})\|^l_{\widetilde{L}^\infty_t(\dot{B}^{\frac{d}{2}-1}_{2,1})}+(\alpha_3+\alpha_4)\|(\tilde{\rho},\mathbf{u})\|^l_{L^1_{t}(\dot{B}^{\frac{d}{2}+1}_{2,1})}
		\lesssim\|(\tilde{\rho}_0,\mathbf{u}_0)\|^l_{\dot{B}^{\frac{d}{2}-1}_{2,1}}+\int_{0}^{t}\mathscr{E}_\infty(\tau)\mathscr{E}_1(\tau)d\tau.
	\end{split}
\end{equation}

\subsection{The  estimate of $\mathbb{P}u$ in the high frequency.}
\quad\quad Next, we are ready to estimate $\mathbb{P}\mathbf{u}$ in the high frequency. The same as \eqref{2.4} in Subsection 2.1, we take the operator $\mathbb{P}$ again to the second equation in \eqref{2.1} and get
\begin{equation*}
	\partial_t\mathbb{P}\mathbf{u}-\alpha_3\Delta\mathbb{P}\mathbf{u}=\mathbb{P}N_2.
\end{equation*}
Applying $\dot{\Delta}_j$ to the above equation,  and then taking the $L^2$ inner product with $|\dot{\Delta}_j\mathbb{P}\mathbf{u}|^{p-2}\dot{\Delta}_j\mathbb{P}\mathbf{u}$, it holds by using H$\ddot{o}$lder's inequality that
\begin{equation}\label{2.46}
	\frac{1}{p}\frac{d}{dt}\|\dot{\Delta}_j\mathbb{P}\mathbf{u}\|^p_{L^p}+c\alpha_32^{2j}\|\dot{\Delta}_j\mathbb{P}\mathbf{u}\|^p_{L^p}\lesssim\|\dot{\Delta}_j\mathbb{P}N_2\|_{L^p}\|\dot{\Delta}_j\mathbb{P}\mathbf{u}\|^{p-1}_{L^p},
\end{equation}
in which the term $c2^{2j}\|\dot{\Delta}_j\mathbb{P}\mathbf{u}\|^p_{L^p}$ is arising from Proposition \ref{A.6}: there exists a positive constant $c$ so that
\begin{equation*}
	-\int_{\mathbb{R}^d}\Delta\dot{\Delta}_j\mathbb{P}\mathbf{u}\cdot|\dot{\Delta}_j\mathbb{P}\mathbf{u}|^{p-2}\dot{\Delta}_j\mathbb{P}\mathbf{u}dx=(p-1)\int_{\mathbb{R}^d}|\nabla\dot{\Delta}_j\mathbb{P}\mathbf{u}|^2|\dot{\Delta}_j\mathbb{P}\mathbf{u}|^{p-2}dx\ge c2^{2j}\|\dot{\Delta}_j\mathbb{P}\mathbf{u}\|^p_{L^p}.
\end{equation*}
Afterwards, multiplying by $1/\|\dot{\Delta}_j\mathbb{P}\mathbf{u}\|^{p-1}_{L^p}2^{j(\frac{d}{p}-1)}$ on both hand side of \eqref{2.46}, integrating the resultant inequality from 0 to $t$, we can obtain by summing up about $j\ge j_0$ that
\begin{equation}\label{2.47}
	\|\mathbb{P}\mathbf{u}\|^h_{\widetilde{L}^\infty_{t}(\dot{B}^{\frac{d}{p}-1}_{p,1})}+\alpha_3\|\mathbb{P}\mathbf{u}\|^h_{L^1_{t}(\dot{B}^{\frac{d}{p}+1}_{p,1})}
	\lesssim
	\|\mathbb{P}\mathbf{u}_0\|^h_{\dot{B}^{\frac{d}{p}-1}_{p,1}}+\|N_2\|^h_{L^1_{t}(\dot{B}^{\frac{d}{p}-1}_{p,1})}.
\end{equation}
In terms of $\|{N_2}\|^h_{\dot{B}^{\frac{d}{p}-1}_{p,1}}$, we have
\begin{equation}\label{2.48}
	\begin{split}
		\|N_2\|^h_{\dot{B}^{\frac{d}{p}-1}_{p,1}}\lesssim&\|\mathbf{u}\cdot\nabla \mathbf{u}\|^h_{\dot{B}^{\frac{d}{p}-1}_{p,1}}+\|F(\tilde{\rho})\nabla \tilde{\rho}\|^h_{\dot{B}^{\frac{d}{p}-1}_{p,1}}+\|k(\tilde{\rho})\Delta \mathbf{u}\|^h_{\dot{B}^{\frac{d}{p}+1}_{p,1}}+\|k(\tilde{\rho})\nabla{\rm div}\mathbf{u}\|^h_{\dot{B}^{\frac{d}{p}-1}_{p,1}}.
	\end{split}
\end{equation}
Now, we begin to estimate the right-hand terms in \eqref{2.48} by using \eqref{a.7} of Proposition \ref{A.7}. For the first term $\|\mathbf{u}\cdot\nabla \mathbf{u}\|^h_{\dot{B}^{\frac{d}{p}-1}_{p,1}}$, we can get
\begin{equation}\label{2.49}
	\!\begin{split}
		\|\mathbf{u}\!\cdot\!\nabla \mathbf{u}\|^h_{\dot{B}^{\frac{d}{p}-1}_{p,1}}\lesssim&\|\mathbf{u}\|_{\dot{B}^{\frac{d}{p}}_{p,1}}\!\|\nabla \mathbf{u}\|_{\dot{B}^{\frac{d}{p}-1}_{p,1}}
		\!\lesssim\!\|\mathbf{u}\|_{\dot{B}^{\frac{d}{p}}_{p,1}}\!\|\mathbf{u}\|_{\dot{B}^{\frac{d}{p}}_{p,1}}
		\!\lesssim\!\|\mathbf{u}\|_{\dot{B}^{\frac{d}{p}-1}_{p,1}}\|\mathbf{u}\|_{\dot{B}^{\frac{d}{p}+1}_{p,1}}
		\lesssim\mathscr{E}_\infty(t)\mathscr{E}_1(t).
	\end{split}
\end{equation}
For the second term $\|F(\tilde{\rho})\nabla \tilde{\rho}\|^h_{\dot{B}^{\frac{d}{p}-1}_{p,1}}$, we have
\begin{equation}\label{2.50}
	\begin{split}
		\|F(\tilde{\rho})\nabla \tilde{\rho}\|^h_{\dot{B}^{\frac{d}{p}-1}_{p,1}}\lesssim&\|F(\tilde{\rho})\nabla \tilde{\rho}^l\|_{\dot{B}^{\frac{d}{p}-1}_{p,1}}+\|F(\tilde{\rho})\nabla \tilde{\rho}^h\|_{\dot{B}^{\frac{d}{p}-1}_{p,1}}\\
		\lesssim&\|\nabla \tilde{\rho}^l\|_{\dot{B}^{\frac{d}{p}}_{p,1}}\|F(\tilde{\rho})\|_{\dot{B}^{\frac{d}{p}-1}_{p,1}}+\|F(\tilde{\rho})\|_{\dot{B}^{\frac{d}{p}}_{p,1}}\|\nabla \tilde{\rho}^l\|_{\dot{B}^{\frac{d}{p}-1}_{p,1}}\\
		\lesssim&\|\tilde{\rho}^l\|_{\dot{B}^{\frac{d}{p}+1}_{p,1}}(1+\|\tilde{\rho}\|_{\dot{B}^{\frac{d}{p}}_{p,1}})\|\tilde{\rho}\|_{\dot{B}^{\frac{d}{p}-1}_{p,1}}+\|\tilde{\rho}\|_{\dot{B}^{\frac{d}{p}}_{p,1}}\|\tilde{\rho}^l\|_{\dot{B}^{\frac{d}{p}}_{p,1}}\\
		\lesssim&(1+\mathscr{E}_\infty(t))\mathscr{E}_\infty(t)\mathscr{E}_1(t)+\mathscr{E}_\infty(t)\mathscr{E}_1(t)
		\lesssim\mathscr{E}_\infty(t)\mathscr{E}_1(t).
	\end{split}
\end{equation}
For the third and last term $\|k(\tilde{\rho})\Delta \mathbf{u}\|^h_{\dot{B}^{\frac{d}{p}-1}_{p,1}}$, it can be bounded as
\begin{equation}\label{2.51}
	\begin{split}
		\|k(\tilde{\rho})\Delta \mathbf{u}\|^h_{\dot{B}^{\frac{d}{p}-1}_{p,1}}
		\lesssim\|k(\tilde{\rho})\|_{\dot{B}^{\frac{d}{p}}_{p,1}}\|\Delta \mathbf{u}\|_{\dot{B}^{\frac{d}{p}-1}_{p,1}}
		\lesssim\|\tilde{\rho}\|_{\dot{B}^{\frac{d}{p}}_{p,1}}\|\mathbf{u}\|_{\dot{B}^{\frac{d}{p}+1}_{p,1}}
		\lesssim\mathscr{E}_\infty(t)\mathscr{E}_1(t).
	\end{split}
\end{equation}
The last term $\|k(\tilde{\rho})\nabla{\rm div}\mathbf{u}\|^h_{\dot{B}^{\frac{d}{p}-1}_{p,1}}$ is similar to $\|k(\tilde{\rho})\Delta \mathbf{u}\|^h_{\dot{B}^{\frac{d}{p}-1}_{p,1}}$, so we omit the details. Plugging \eqref{2.49}-\eqref{2.51} into \eqref{2.48} gives that
\begin{equation}\label{2.52}
	\|N_2\|^h_{\dot{B}^{\frac{d}{p}-1}_{p,1}}\lesssim\mathscr{E}_\infty(t)\mathscr{E}_1(t).
\end{equation}
Ultimately, we can get
\begin{equation}\label{2.53}
	\begin{split}
		\|\mathbb{P}\mathbf{u}\|^h_{\widetilde{L}^\infty_{t}(\dot{B}^{\frac{d}{p}-1}_{p,1})}+\alpha_3\|\mathbb{P}\mathbf{u}\|^h_{L^1_{t}(\dot{B}^{\frac{d}{p}+1}_{p,1})}
		\lesssim \|\mathbb{P}\mathbf{u}_0\|^h_{\dot{B}^{\frac{d}{p}-1}_{p,1}}+\int_{0}^{t}\mathscr{E}_\infty(\tau)\mathscr{E}_1(\tau)d\tau.
	\end{split}
\end{equation}

\subsection{The estimate of $(\tilde{\rho},\mathbb{Q}\mathbf{u})$ in the high frequency}
\quad\quad In this subsection, we are going to estimate the term $(\tilde{\rho},\mathbb{Q}\mathbf{u})$ in the high frequency. On the one hand, in order to get the results we want, we use a quantity from \cite{chen4,haspot} that
\begin{equation}\label{2.64}
	G:=\mathbb{Q}\mathbf{u}-\frac{\alpha_1}{\alpha_3+\alpha_4}\Delta^{-1}\nabla \tilde{\rho},
\end{equation}
and then use the second equation of \eqref{2.1} to get
\begin{equation}\label{2.65}
	\begin{split}
		&\partial_tG-(\alpha_3+\alpha_4)\Delta G\\
		=&\mathbb{Q}N_2-\frac{\alpha_1}{\alpha_3+\alpha_4}\Delta^{-1}\nabla N_1+\frac{\alpha_1\alpha_2}{\alpha_3+\alpha_4}G+\frac{\alpha_1^2\alpha_2}{(\alpha_3+\alpha_4)^2}\Delta^{-1}\nabla\tilde{\rho}-\gamma\frac{\nabla}{1-\Delta}\tilde{\rho}.
	\end{split}
\end{equation}
After that, we can gain by a standard energy argument that
\begin{equation}\label{2.66}
	\begin{split}
		&\|G\|^h_{\widetilde{L}^\infty_{t}(\dot{B}^{\frac{d}{p}-1}_{p,1})}+\|G\|^h_{L^1_t(\dot{B}^{\frac{d}{p}+1}_{p,1})}\\
		\lesssim&\|G_0\|^h_{\dot{B}^{\frac{d}{p}-1}_{p,1}}+\|\mathbb{Q}N_2\|^h_{L^1_{t}(\dot{B}^{\frac{d}{p}-1}_{p,1})}+\|\Delta^{-1}\nabla N_1\|^h_{L^1_{t}(\dot{B}^{\frac{d}{p}-1}_{p,1})}\\
		&+\|G\|^h_{L^1_{t}(\dot{B}^{\frac{d}{p}-1}_{p,1})}+\|\Delta^{-1}\nabla \tilde{\rho}\|^h_{L^1_{t}(\dot{B}^{\frac{d}{p}-1}_{p,1})}+\|\frac{\nabla}{1-\Delta}\tilde{\rho}\|^h_{L^1_{t}(\dot{B}^{\frac{d}{p}-1}_{p,1})}\\
		\lesssim&\|G_0\|^h_{\dot{B}^{\frac{d}{p}-1}_{p,1}}+\|N_2\|^h_{L^1_{t}(\dot{B}^{\frac{d}{p}-1}_{p,1})}+\|N_1\|^h_{L^1_{t}(\dot{B}^{\frac{d}{p}-2}_{p,1})}\\
		&+\|G\|^h_{L^1_{t}(\dot{B}^{\frac{d}{p}-1}_{p,1})}+\|\tilde{\rho}\|^h_{L^1_{t}(\dot{B}^{\frac{d}{p}-2}_{p,1})}.
	\end{split}
\end{equation}
Here we have used the following relation in high frequency:
\begin{equation*}	\big\|\frac{\nabla}{1-\Delta}\tilde{\rho}\big\|^h_{\dot{B}^{\frac{d}{p}-1}_{p,1}}\lesssim\|\tilde{\rho}\|^h_{\dot{B}^{\frac{d}{p}-2}_{p,1}},
\end{equation*}
which can be also obtained by using the classical estimate on the Bessel potential $(1-\Delta)^{-1}$ for any function $f\in\dot{B}_{p,r}^s$:
\begin{equation}
\|(1-\Delta)^{-1}f\|_{\dot{B}_{p,r}^s\cap\dot{B}_{p,r}^{s+2}}\lesssim\|f\|_{\dot{B}_{p,r}^s},\ s\in\mathbb{R},\ p,r\in[1,\infty].
\end{equation}
On the other hand, applying the divergence operator to \eqref{2.64} and summing up with the first equation of \eqref{2.1}, we have
\begin{equation*}
	\partial_t\tilde{\rho}+\frac{\alpha_1\alpha_2}{\alpha_3+\alpha_4}\tilde{\rho}+\mathbf{u}\cdot\nabla \tilde{\rho}=-\tilde{\rho}{\rm div}\mathbf{u}-\alpha_2{\rm div}G.
\end{equation*}
Taking $\dot{\Delta}_j$ to the above equation and using the commutator's argument to get
\begin{equation}\label{2.67}
	\partial_t\dot{\Delta}_j\tilde{\rho}+\frac{\alpha_1\alpha_2}{\alpha_3+\alpha_4}\dot{\Delta}_j\tilde{\rho}+\mathbf{u}\cdot\nabla
	\dot{\Delta}_j\tilde{\rho}+[\dot{\Delta}_j, \mathbf{u}\cdot\nabla]\tilde{\rho}
	=-\dot{\Delta}_j(\tilde{\rho}{\rm div}\mathbf{u})-\alpha_2\dot{\Delta}_j{\rm div}G.
\end{equation}
Similarly, following a standard energy argument, we get
\begin{equation}\label{2.68}
	\begin{split}
		\|\tilde{\rho}\|^h_{\widetilde{L}^\infty_{t}(\dot{B}^{\frac{d}{p}}_{p,1})}\!+\!\frac{\alpha_1\alpha_2}{\alpha_3+\alpha_4}&\|\tilde{\rho}\|^h_{L^1_t(\dot{B}^{\frac{d}{p}}_{p,1})}
		\lesssim\|\tilde{\rho}_0\|^h_{\dot{B}^{\frac{d}{p}}_{p,1}}\!+\!\|\tilde{\rho}{\rm div}\mathbf{u}\|^h_{L^1_{t}(\dot{B}^{\frac{d}{p}}_{p,1})}+\|{\rm div}G\|^h_{L^1_{t}(\dot{B}^{\frac{d}{p}}_{p,1})}\\
		& +\!\int_{0}^{t}\!\|{\rm div}\mathbf{u}\|_{L^\infty}\|\tilde{\rho}\|^h_{\dot{B}^{\frac{d}{p}}_{p,1}}\!d\tau\!+\!\int_{0}^{t}\!\sum\limits_{j\ge j_0}2^{\frac{d}{p}j}\|[\dot{\Delta}_j, \mathbf{u}\cdot\nabla]\tilde{\rho}\|_{L^2}d\tau.
	\end{split}
\end{equation}
By virtue of the high frequency cut-off, we have
\begin{equation}\label{2.69}
	\|G\|^h_{L^1_t(\dot{B}^{\frac{d}{p}-1}_{p,1})}\lesssim2^{-2j_0}\|G\|^h_{L^1_t(\dot{B}^{\frac{d}{p}+1}_{p,1})}\ \ {\rm and}\ \ \|\tilde{\rho}\|^h_{L^1_t(\dot{B}^{\frac{d}{p}-2}_{p,1})}\lesssim2^{-2j_0}\|\tilde{\rho}\|^h_{L^1_t(\dot{B}^{\frac{d}{p}}_{p,1})}.
\end{equation}
Choosing $j_0$ large enough such that $\|G\|^h_{L^1_t(\dot{B}^{\frac{d}{p}-1}_{p,1})}$, then $\|\tilde{\rho}\|^h_{L^1_t(\dot{B}^{\frac{d}{p}-2}_{p,1})}$ can be absorbed by the left hand side of \eqref{2.66} and \eqref{2.68}. Therefore, summing up \eqref{2.66} and \eqref{2.68}, we finally get
\begin{equation}\label{2.70}
	\begin{split}
		&\|G\|^h_{\widetilde{L}^\infty_{t}(\dot{B}^{\frac{d}{p}-1}_{p,1})}+\|G\|^h_{L^1_t(\dot{B}^{\frac{d}{p}+1}_{p,1})}+\|\tilde{\rho}\|^h_{\widetilde{L}^\infty_{t}(\dot{B}^{\frac{d}{p}}_{p,1})}+\frac{\alpha_1\alpha_2}{\alpha_3+\alpha_4}\|\tilde{\rho}\|^h_{L^1_t(\dot{B}^{\frac{d}{p}}_{p,1})}\\
		\lesssim&\|G_0\|^h_{\dot{B}^{\frac{d}{p}-1}_{p,1}}+\|\tilde{\rho}_0\|^h_{\dot{B}^{\frac{d}{p}}_{p,1}}+\|N_1\|^h_{L^1_{t}(\dot{B}^{\frac{d}{p}-2}_{p,1})}+\|N_2\|^h_{L^1_{t}(\dot{B}^{\frac{d}{p}-1}_{p,1})}\\
		&+\int_{0}^{t}\|{\rm div}\mathbf{u}\|_{L^\infty}\|\tilde{\rho}\|^h_{\dot{B}^{\frac{d}{p}}_{p,1}}d\tau+\int_{0}^{t}\sum\limits_{j\ge j_0}2^{\frac{d}{p}j}\|[\dot{\Delta}_j, \mathbf{u}\cdot\nabla]\tilde{\rho}\|_{L^2}d\tau
		+\int_{0}^{t}\|\tilde{\rho}{\rm div}\mathbf{u}\|^h_{\dot{B}^{\frac{d}{p}}_{p,1}}d\tau.
	\end{split}
\end{equation}
From the expression of $\|N_1\|^h_{\dot{B}^{\frac{d}{p}-2}_{p,1}}$, one has
\begin{equation}\label{2.71}
	\|N_1\|^h_{\dot{B}^{\frac{d}{p}-2}_{p,1}}\lesssim\|\tilde{\rho}{\rm div}\mathbf{u}\|^h_{\dot{B}^{\frac{d}{p}-2}_{p,1}}+\|\mathbf{u}\cdot\nabla\tilde{\rho}\|^h_{\dot{B}^{\frac{d}{p}-2}_{p,1}}.
\end{equation}
Following \eqref{a.8} of Proposition \ref{A.7}, it holds
\begin{equation}\label{2.72}
	\begin{split}
		\|\tilde{\rho}{\rm div}\mathbf{u}\|^h_{\dot{B}^{\frac{d}{p}-2}_{p,1}}\lesssim&\|\tilde{\rho}{\rm div}\mathbf{u}\|^h_{\dot{B}^{\frac{d}{p}}_{p,1}}\lesssim\|\tilde{\rho}\|_{\dot{B}^{\frac{d}{p}}_{p,1}}\|{\rm div}\mathbf{u}\|_{\dot{B}^{\frac{d}{p}}_{p,1}}
		\lesssim\|\tilde{\rho}\|_{\dot{B}^{\frac{d}{p}}_{p,1}}\|\mathbf{u}\|_{\dot{B}^{\frac{d}{p}+1}_{p,1}}
		\lesssim\mathscr{E}_\infty(t)\mathscr{E}_1(t),
	\end{split}
\end{equation}
and
\begin{equation}\label{2.73}
	\begin{split}
		\|\mathbf{u}\cdot\nabla\tilde{\rho}\|^h_{\dot{B}^{\frac{d}{p}-2}_{p,1}}\lesssim&\|\mathbf{u}\cdot\nabla\tilde{\rho}\|^h_{\dot{B}^{\frac{d}{p}-1}_{p,1}}\lesssim\|\mathbf{u}\|_{\dot{B}^{\frac{d}{p}}_{p,1}}\|\nabla\tilde{\rho}\|_{\dot{B}^{\frac{d}{p}-1}_{p,1}}
		\lesssim\|\mathbf{u}\|_{\dot{B}^{\frac{d}{p}}_{p,1}}\|\tilde{\rho}\|_{\dot{B}^{\frac{d}{p}}_{p,1}}\\
		\lesssim&\|\mathbf{u}\|_{\dot{B}^{\frac{d}{p}-1}_{p,1}}^{\frac{1}{2}}\|\mathbf{u}\|_{\dot{B}^{\frac{d}{p}+1}_{p,1}}^{\frac{1}{2}}\|\tilde{\rho}\|_{\dot{B}^{\frac{d}{p}-1}_{p,1}}^{\frac{1}{2}}\|\tilde{\rho}\|_{\dot{B}^{\frac{d}{p}+1}_{p,1}}^{\frac{1}{2}}\\
		\lesssim&\mathscr{E}_\infty(t)\mathscr{E}_1(t).
	\end{split}
\end{equation}
So, plugging \eqref{2.72} and \eqref{2.73} into \eqref{2.71}, $\|N_1\|^h_{\dot{B}^{\frac{d}{p}-2}_{p,1}}$ can be bounded as
\begin{equation}\label{2.74}
	\|N_1\|^h_{\dot{B}^{\frac{d}{p}-2}_{p,1}}\lesssim\mathscr{E}_\infty(t)\mathscr{E}_1(t).
\end{equation}
Besides, we have already estimated the term $\|N_2\|^h_{\dot{B}^{\frac{d}{p}-1}_{p,1}}$ completely in \eqref{2.52}. Now, using \eqref{a.17} of Lemma \ref{A.12}, we can get
\begin{equation}\label{2.75}
	\begin{split}
		&\|{\rm div}\mathbf{u}\|_{L^\infty}\|\tilde{\rho}\|^h_{\dot{B}^{\frac{d}{p}}_{p,1}}+\sum\limits_{j\ge j_0}2^{\frac{d}{p}j}\|[\dot{\Delta}_j, \mathbf{u}\cdot\nabla]\tilde{\rho}\|_{L^2}\\
		\lesssim&\|{\rm div}\mathbf{u}\|_{\dot{B}^{\frac{d}{p}}_{p,1}}\|\tilde{\rho}\|^h_{\dot{B}^{\frac{d}{p}}_{p,1}}+\|\nabla \mathbf{u}\|_{\dot{B}^{\frac{d}{p}}_{p,1}}\|\tilde{\rho}\|_{\dot{B}^{\frac{d}{p}}_{p,1}}		\lesssim\|\mathbf{u}\|_{\dot{B}^{\frac{d}{p}+1}_{p,1}}\|\tilde{\rho}\|^h_{\dot{B}^{\frac{d}{p}}_{p,1}}+\|\mathbf{u}\|_{\dot{B}^{\frac{d}{p}+1}_{p,1}}\|\tilde{\rho}\|_{\dot{B}^{\frac{d}{p}}_{p,1}}\\
		\lesssim&\mathscr{E}_\infty(t)\mathscr{E}_1(t).
	\end{split}
\end{equation}
From \eqref{a.7} of Proposition \ref{A.7}, $\|\tilde{\rho}{\rm div}\mathbf{u}\|^h_{\dot{B}^{\frac{d}{p}}_{p,1}}$ can be bounded as
\begin{equation}\label{2.76}
	\begin{split}
		\|\tilde{\rho}{\rm div}\mathbf{u}\|^h_{\dot{B}^{\frac{d}{p}}_{p,1}}\lesssim\|\tilde{\rho}\|_{\dot{B}^{\frac{d}{p}}_{p,1}}\|{\rm div}\mathbf{u}\|_{\dot{B}^{\frac{d}{p}}_{p,1}}\lesssim\|\tilde{\rho}\|_{\dot{B}^{\frac{d}{p}}_{p,1}}\|\mathbf{u}\|_{\dot{B}^{\frac{d}{p}+1}_{p,1}}
		\lesssim\mathscr{E}_\infty(t)\mathscr{E}_1(t).
	\end{split}
\end{equation}
In aid of all the above inequalities, we have
\begin{equation}\label{2.79}
	\begin{split}
		&\|G\|^h_{\widetilde{L}^\infty_{t}(\dot{B}^{\frac{d}{p}-1}_{p,1})}+\|G\|^h_{L^1_t(\dot{B}^{\frac{d}{p}+1}_{p,1})}+\|\tilde{\rho}\|^h_{\widetilde{L}^\infty_{t}(\dot{B}^{\frac{d}{p}}_{p,1})}+\frac{\alpha_1\alpha_2}{\alpha_3+\alpha_4}\|\tilde{\rho}\|^h_{L^1_t(\dot{B}^{\frac{d}{p}}_{p,1})}\\
		\lesssim&\|G_0\|^h_{\dot{B}^{\frac{d}{p}-1}_{p,1}}+\|\tilde{\rho}_0\|^h_{\dot{B}^{\frac{d}{p}}_{p,1}}+\int_{0}^{t}\mathscr{E}_\infty(\tau)\mathscr{E}_1(\tau)d\tau.
	\end{split}
\end{equation}
Owing to $G:=\mathbb{Q}\mathbf{u}-\frac{\alpha_1}{\alpha_3+\alpha_4}\Delta^{-1}\nabla \tilde{\rho}$, we can simply get
\begin{equation}\label{2.80}
	\begin{split}
		\|\mathbb{Q}\mathbf{u}\|^h_{\widetilde{L}^\infty_{t}(\dot{B}^{\frac{d}{p}-1}_{p,1})}\lesssim&\|G\|^h_{\widetilde{L}^\infty_{t}(\dot{B}^{\frac{d}{p}-1}_{p,1})}
+\|\Delta^{-1}\nabla \tilde{\rho}\|^h_{\widetilde{L}^\infty_{t}(\dot{B}^{\frac{d}{p}-1}_{p,1})}
		\lesssim\|G\|^h_{\widetilde{L}^\infty_{t}(\dot{B}^{\frac{d}{p}-1}_{p,1})}+\|\tilde{\rho}\|^h_{\widetilde{L}^\infty_{t}(\dot{B}^{\frac{d}{p}}_{p,1})},
	\end{split}
\end{equation}
\begin{equation}\label{2.81}
	\begin{split}
		\|\mathbb{Q}\mathbf{u}\|^h_{L^1_t(\dot{B}^{\frac{d}{p}+1}_{p,1})}\lesssim&\|G\|^h_{L^1_t(\dot{B}^{\frac{d}{p}+1}_{p,1})}+\|\Delta^{-1}\nabla \tilde{\rho}\|^h_{L^1_t(\dot{B}^{\frac{d}{p}+1}_{p,1})}
		\lesssim\|G\|^h_{L^1_t(\dot{B}^{\frac{d}{p}+1}_{p,1})}+\|\tilde{\rho}\|^h_{L^1_t(\dot{B}^{\frac{d}{p}}_{p,1})},
	\end{split}
\end{equation}
and
\begin{equation}\label{2.82}
	\begin{split}
		\|G_0\|^h_{\dot{B}^{\frac{d}{p}-1}_{p,1}}\lesssim&\|\mathbb{Q}\mathbf{u}_0\|^h_{\dot{B}^{\frac{d}{p}-1}_{p,1}}+\|\Delta^{-1}\nabla \tilde{\rho}_0\|^h_{\dot{B}^{\frac{d}{p}+1}_{p,1}}
		\lesssim\|\mathbb{Q}\mathbf{u}_0\|^h_{\dot{B}^{\frac{d}{p}-1}_{p,1}}+\|\tilde{\rho}_0^h\|_{\dot{B}^{\frac{d}{p}}_{p,1}}.
	\end{split}
\end{equation}
Plugging \eqref{2.80}-\eqref{2.82} into \eqref{2.79}, we eventually arrive at
\begin{equation}\label{2.83}
	\begin{split}
		&\|\mathbb{Q}\mathbf{u}\|^h_{\widetilde{L}^\infty_{t}(\dot{B}^{\frac{d}{p}-1}_{p,1})}+\|\mathbb{Q}\mathbf{u}\|^h_{L^1_t(\dot{B}^{\frac{d}{p}+1}_{p,1})}+\|\tilde{\rho}\|^h_{\widetilde{L}^\infty_{t}(\dot{B}^{\frac{d}{p}}_{p,1})}+\frac{\alpha_1\alpha_2}{\alpha_3+\alpha_4}\|\tilde{\rho}\|^h_{L^1_t(\dot{B}^{\frac{d}{p}}_{p,1})}\\
		\lesssim&\|\mathbb{Q}\mathbf{u}_0\|^h_{\dot{B}^{\frac{d}{p}-1}_{p,1}}+\|\tilde{\rho}_0\|^h_{\dot{B}^{\frac{d}{p}}_{p,1}}+\int_{0}^{t}\mathscr{E}_\infty(\tau)\mathscr{E}_1(\tau)d\tau.
	\end{split}
\end{equation}

\vspace{3mm}
			
\noindent{\textbf{The proof of Theorem \ref{Theorem 1.1}}}:
First, define
\begin{equation}
\begin{array}{rl}
&\mathcal{X}(t)\triangleq\sup\limits_{t>0}\mathscr{E}_\infty(t)+\int_0^t\mathscr{E}_1(\tau)d\tau,\\[2mm]
&\displaystyle\mathcal{X}(0)\triangleq\|(\tilde{\rho}_0^l,\mathbf{u}_0^l)\|_{\dot{B}_{2,1}^{\frac{d}{2}-1}}+\|\tilde{\rho}^h\|_{\dot{B}_{p,1}^{\frac{d}{p}}}+\|\mathbf{u}_0^h\|_{\dot{B}_{p,1}^{\frac{d}{p}-1}}.
\end{array}
\end{equation}
Then, by virtue of the estimates in Sections 2.1-2.4, one knows that there exists a constant $C>0$ independent of $t$ such that
\begin{equation}\label{2.85}
\mathcal{X}(t)\leq C\mathcal{X}(0)+C\mathcal{X}(t)^2.
\end{equation}

On the other hand, the local existence has been achieved in Chikami \cite{chikami1}. Using the setting of initial data in Theorem \ref{Theorem 1.1} and the local existence, one can get for some $C>0$ that
\begin{equation}\label{2.86}
\mathcal{X}(t)\leq 2Cc_0,\ \ \forall t\in[0,T].
\end{equation}
Then, a standard continuation argument suffices to yield the global existence in Theorem \ref{Theorem 1.1}.

\section{Decay rate}
\quad \quad In this section, we shall derive the decay rate of the solution constructed in Section 2.
From Section 2, we can get the following inequality:
\begin{equation}\label{4.28}
	\begin{split}
	&\frac{d}{dt}(\|(\tilde{\rho},\mathbf{u})\|^l_{\dot{B}^{\frac{d}{2}-1}_{2,1}}+\|\tilde{\rho}\|^h_{\dot{B}^{\frac{d}{p}}_{p,1}}+\|\mathbf{u}\|^h_{\dot{B}^{\frac{d}{p}-1}_{p,1}})+\|(\tilde{\rho},\mathbf{u})\|^l_{\dot{B}^{\frac{d}{2}+1}_{2,1}}+\|\tilde{\rho}\|^h_{\dot{B}^{\frac{d}{p}}_{p,1}}+\|\mathbf{u}\|^h_{\dot{B}^{\frac{d}{p}+1}_{p,1}}\\
	\lesssim&(\|(\tilde{\rho},\mathbf{u})\|^l_{\dot{B}^{\frac{d}{2}-1}_{2,1}}+\|\tilde{\rho}\|^h_{\dot{B}^{\frac{d}{p}}_{p,1}}+\|\mathbf{u}\|^h_{\dot{B}^{\frac{d}{p}-1}_{p,1}})(\|(\tilde{\rho},\mathbf{u})\|^l_{\dot{B}^{\frac{d}{2}+1}_{2,1}}+\|\tilde{\rho}\|^h_{\dot{B}^{\frac{d}{p}}_{p,1}}+\|\mathbf{u}\|^h_{\dot{B}^{\frac{d}{p}+1}_{p,1}}).
\end{split}
\end{equation}
By the proof of the global existence of Theorem \ref{Theorem 1.1}, the following estimate holds:
\begin{equation}\label{4.29}
\|(\tilde{\rho},\mathbf{u})\|^l_{\dot{B}^{\frac{d}{2}-1}_{2,1}}+\|\tilde{\rho}\|^h_{\dot{B}^{\frac{d}{p}}_{p,1}}+\|\mathbf{u}\|^h_{\dot{B}^{\frac{d}{p}-1}_{p,1}}\leq c_0,
\end{equation}
from which we can infer from (\ref{4.28}) that there exists a constant $\bar c>0$ such that
\begin{equation}\label{4.30}
\frac{d}{dt}\Big(\|(\tilde{\rho},\mathbf{u})\|^l_{\dot{B}^{\frac{d}{2}-1}_{2,1}}+\|\tilde{\rho}\|^h_{\dot{B}^{\frac{d}{p}}_{p,1}}+\|\mathbf{u}\|^h_{\dot{B}^{\frac{d}{p}-1}_{p,1}}\Big)
+\bar{c}\Big(\|(\tilde{\rho},\mathbf{u})\|_{\dot B_{2,1}^{\frac{d}{2}+1}}^l+\|\tilde{\rho}\|_{\dot B_{p,1}^{\frac{d}{p}}}^h+\|\mathbf{u}\|_{\dot B_{p,1}^{\frac{d}{p}+1}}^h\Big)\leq 0.
\end{equation}
With (\ref{4.30}) in hand, one can get a Lyapunov-type differential inequality, which relies heavily on an interpolation inequality. Before using the interpolation inequality, it requires the uniform bound as follows:
\begin{equation}\label{4.31}
\|(\tilde{\rho},\mathbf{u})\|_{\dot B_{2,\infty}^{-\sigma}}^l\leq C,\ for\ any\ 1-\frac{d}{2}<\sigma\leq\sigma_0\triangleq\frac{2d}{p}-\frac{d}{2}.
\end{equation}

\subsection{Propagation the regularity of the initial data with negative index}
\quad\quad In this subsection, we will prove (\ref{4.31}). At first, we introduce a lemma.

\begin{lemma}\label{le3.1}
	Assume that $(\tilde{\rho},\mathbf{u})$ satisfies \eqref{1.4}, then we have
\begin{equation}	\big(\|(\tilde{\rho},\mathbf{u})\|_{\dot{B}^{-\sigma}_{2,\infty}}^l\big)^2\lesssim\big(\|(\tilde{\rho}_0,\mathbf{u}_0)\|_{\dot{B}^{-\sigma}_{2,\infty}}^l\big)^2+\int_{0}^{t}\|(N_1,N_2)\|_{\dot{B}^{-\sigma}_{2,\infty}}^l\|(\tilde{\rho},\mathbf{u})(\tau)\|_{\dot{B}^{-\sigma}_{2,\infty}}^ld\tau.
\end{equation}
\end{lemma}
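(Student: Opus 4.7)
My plan is to carry out a frequency-localized Lyapunov estimate on each low-frequency dyadic block, then weight by $2^{-j\sigma}$ and take a supremum in $j$. Throughout I will use the cut-off $j_0$ already fixed in the spectral analysis of Section~2.2, so the low-frequency regime on which the estimate lives matches exactly the one used elsewhere in the paper.

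First I would apply $\dot{\Delta}_j$ to the system \eqref{2.1}, test the localized continuity equation against $\alpha_1\dot{\Delta}_j\tilde{\rho}+\gamma(1-\Delta)^{-1}\dot{\Delta}_j\tilde{\rho}$, and test the localized momentum equation against $\bar{\rho}\dot{\Delta}_j\mathbf{u}$. Since $(1-\Delta)^{-1}$ is self-adjoint and commutes with $\nabla$, integration by parts turns $\bar{\rho}\!\int\!\dot{\Delta}_j\tilde{\rho}\,\nabla\!\cdot\!\dot{\Delta}_j\mathbf{u}\,dx$ into $-\bar{\rho}\!\int\!\nabla\dot{\Delta}_j\tilde{\rho}\!\cdot\!\dot{\Delta}_j\mathbf{u}\,dx$, and the analogous identity holds with the Bessel potential inserted; upon adding the three identities every linear cross-term cancels, leaving
\begin{equation*}
\tfrac{1}{2}\tfrac{d}{dt}\mathcal{E}_j(t)+\mathcal{D}_j(t)=R_j(t),
\end{equation*}
where
\begin{equation*}
\mathcal{E}_j:=\alpha_1\|\dot{\Delta}_j\tilde{\rho}\|_{L^2}^2+\gamma\|(1-\Delta)^{-1/2}\dot{\Delta}_j\tilde{\rho}\|_{L^2}^2+\bar{\rho}\|\dot{\Delta}_j\mathbf{u}\|_{L^2}^2,
\end{equation*}
$\mathcal{D}_j=\mu\|\nabla\dot{\Delta}_j\mathbf{u}\|_{L^2}^2+(\mu+\mu')\|\nabla\!\cdot\!\dot{\Delta}_j\mathbf{u}\|_{L^2}^2\geq 0$ is the viscous dissipation, and $R_j=\alpha_1(\dot{\Delta}_jN_1,\dot{\Delta}_j\tilde{\rho})+\gamma((1-\Delta)^{-1}\dot{\Delta}_jN_1,\dot{\Delta}_j\tilde{\rho})+\bar{\rho}(\dot{\Delta}_jN_2,\dot{\Delta}_j\mathbf{u})$ collects the source pairings.

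Next I would check that $\mathcal{E}_j\simeq\|\dot{\Delta}_j(\tilde{\rho},\mathbf{u})\|_{L^2}^2$ for $j\leq j_0$. In Fourier the density coefficient equals $\alpha_1+\gamma/(1+|\xi|^2)$, which at $\xi=0$ reduces to $(P'(\bar{\rho})+\gamma\bar{\rho})/\bar{\rho}>0$ by the stability hypothesis, and the threshold $j_0$ of Section~2.2 guarantees this coefficient stays uniformly positive on the support of low-frequency blocks. The upper bound is automatic from $\|(1-\Delta)^{-1/2}\|_{L^2\to L^2}\leq 1$, and the same estimate together with Cauchy--Schwarz yields $|R_j|\lesssim\|\dot{\Delta}_j(N_1,N_2)\|_{L^2}\|\dot{\Delta}_j(\tilde{\rho},\mathbf{u})\|_{L^2}$. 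Discarding $\mathcal{D}_j\geq 0$ and integrating in time then produces, for each $j\leq j_0$,
\begin{equation*}
\|\dot{\Delta}_j(\tilde{\rho},\mathbf{u})(t)\|_{L^2}^2\lesssim\|\dot{\Delta}_j(\tilde{\rho}_0,\mathbf{u}_0)\|_{L^2}^2+\int_0^t\|\dot{\Delta}_j(\tilde{\rho},\mathbf{u})(\tau)\|_{L^2}\|\dot{\Delta}_j(N_1,N_2)(\tau)\|_{L^2}\,d\tau.
\end{equation*}
Multiplying through by $2^{-2j\sigma}$, bounding each factor under the integral by $\sup_{j\leq j_0}2^{-j\sigma}\|\dot{\Delta}_j\cdot\|_{L^2}=\|\cdot\|_{\dot{B}^{-\sigma}_{2,\infty}}^l$, and finally taking $\sup_{j\leq j_0}$ on the left gives the claimed inequality.

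The main obstacle lies entirely in the coercivity step: the stability condition $P'(\bar{\rho})+\gamma\bar{\rho}>0$ does not force $\alpha_1=P'(\bar{\rho})/\bar{\rho}$ to be positive when $\gamma>0$ is large, so the density part of $\mathcal{E}_j$ can lose positivity at high $|\xi|$. The saving grace is that my estimate only needs to hold on low frequencies, and the threshold already chosen in Section~2.2 (namely $|\xi|\leq\sqrt{(P'(\bar{\rho})+\gamma\bar{\rho})/(2\gamma\bar{\rho})}$ in the case $\gamma>0$) is precisely the one that keeps $\alpha_1+\gamma/(1+|\xi|^2)$ bounded below by a positive constant; when $\gamma\leq 0$ no restriction on $j_0$ is required, since $\gamma/(1+|\xi|^2)\geq\gamma$ yields $\alpha_1+\gamma/(1+|\xi|^2)\geq\alpha_1+\gamma>0$ outright.
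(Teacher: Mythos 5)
Your proof is correct and follows essentially the same route as the paper: a frequency-localized energy estimate whose coercivity at low frequencies comes from the stability condition $P'(\bar\rho)+\gamma\bar\rho>0$, followed by weighting with $2^{-j\sigma}$ and taking $\sup_{j\le j_0}$. Your Bessel-potential multiplier $\alpha_1\dot{\Delta}_j\tilde{\rho}+\gamma(1-\Delta)^{-1}\dot{\Delta}_j\tilde{\rho}$ produces exactly the same energy functional as the paper's version, which keeps $\phi$ and uses $-\Delta\phi+\phi=\tilde{\rho}$ together with the low-frequency equivalence $\|\dot{\Delta}_j\phi\|_{L^2}\simeq\|\dot{\Delta}_j\tilde{\rho}\|_{L^2}$, so the two arguments coincide up to presentation.
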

\begin{proof}
We use the standard energy method. Consider the system
\begin{equation}\label{3.6}
	\begin{cases}
		\partial_t\tilde{\rho}+\alpha_2{\rm div}\mathbf{u}=N_1,\\
		\partial_t\mathbf{u}-\alpha_3\Delta \mathbf{u}-\alpha_4\nabla{\rm div}\mathbf{u}+\alpha_1\nabla\rho+\gamma\nabla\phi=N_2,\\
		-\Delta\phi+\phi=\tilde{\rho}.
	\end{cases}
\end{equation}
Applying $ \dot{\Delta}_j $ to $ \eqref{3.6}_1 $ and $\eqref{3.6}_2$, then taking the $L^2$ inner product with $\dot{\Delta}_j\tilde{\rho}$ and $\dot{\Delta}_j\mathbf{u}$ respectively, then we obtain
\begin{equation}\label{3.7}
		\int_{\mathbb{R}^d}\partial_t\dot{\Delta}_j\tilde{\rho}\dot{\Delta}_j\tilde{\rho}dx+\alpha_2\int_{\mathbb{R}^d}{\rm div}\dot{\Delta}_j\mathbf{u}\dot{\Delta}_j\tilde{\rho}dx=\int_{\mathbb{R}^d}\dot{\Delta}_jN_1\dot{\Delta}_j\tilde{\rho}dx,
\end{equation}
and
\begin{equation}\label{3.8}
	\begin{split}
		\int_{\mathbb{R}^d}\partial_t\dot{\Delta}_j\mathbf{u}\dot{\Delta}_j\mathbf{u}dx&-\alpha_3\int_{\mathbb{R}^d}\Delta\dot{\Delta}_j\mathbf{u}\dot{\Delta}_j\mathbf{u}dx-\alpha_4\int_{\mathbb{R}^d}\nabla{\rm div}\dot{\Delta}_j\mathbf{u}\dot{\Delta}_j\mathbf{u}dx\\
		&+\alpha_1\int_{\mathbb{R}^d}\nabla\dot{\Delta}_j\tilde{\rho}\dot{\Delta}_j\mathbf{u}dx+\gamma\int_{\mathbb{R}^d}\nabla\dot{\Delta}_j\phi\dot{\Delta}_j\mathbf{u}dx=\int_{\mathbb{R}^d}\dot{\Delta}_jN_2\dot{\Delta}_j\mathbf{u}dx.
	\end{split}
\end{equation}
Because of $ \eqref{3.6}_1 $ and $ \eqref{3.6}_3 $, the term $\gamma\int_{\mathbb{R}^d}\nabla\dot{\Delta}_j\phi\dot{\Delta}_j\mathbf{u}dx$ in the above equation can be converted to
\begin{equation}\label{3.9}
	\begin{split}
		\gamma\int_{\mathbb{R}^d}\nabla\dot{\Delta}_j\phi\dot{\Delta}_j\mathbf{u}dx=&-\gamma\int_{\mathbb{R}^d}\dot{\Delta}_j\phi{\rm div}\dot{\Delta}_j\mathbf{u}dx=\frac{\gamma}{\alpha_2}\int_{\mathbb{R}^d}\dot{\Delta}_j\phi_j\partial_t\dot{\Delta}_j\tilde{\rho}-\dot{\Delta}_j\phi\dot{\Delta}_jN_1dx\\
		=&\frac{\gamma}{\alpha_2}\int_{\mathbb{R}^d}\dot{\Delta}_j\phi_j\partial_t(-\Delta\dot{\Delta}_j\phi+\dot{\Delta}_j\phi)dx-\frac{\gamma}{\alpha_2}\int_{\mathbb{R}^d}\dot{\Delta}_j\phi\dot{\Delta}_jN_1dx\\
		=&\frac{\gamma}{\alpha_2}\frac{1}{2}\frac{d}{dt}\int_{\mathbb{R}^d}|\nabla\dot{\Delta}_j\phi|^2+(\dot{\Delta}_j\phi)^2dx-\frac{\gamma}{\alpha_2}\int_{\mathbb{R}^d}\dot{\Delta}_j\phi\dot{\Delta}_jN_1dx.
	\end{split}
\end{equation}
After plugging \eqref{3.9} into \eqref{3.8}, we multiply by $\frac{\alpha_1}{\alpha_2}$ on the \eqref{3.7} and sum up with \eqref{3.8} to get
\begin{equation}\label{3.10}
	\begin{split}	&\frac{1}{2}\frac{d}{dt}\big(\frac{\alpha_1}{\alpha_2}\|\dot{\Delta}_j\tilde{\rho}\|_{L^2}^2+\|\dot{\Delta}_j\mathbf{u}\|_{L^2}^2+\frac{\gamma}{\alpha_2}\|\nabla\dot{\Delta}_j\phi\|_{L^2}^2+\frac{\gamma}{\alpha_2}\|\dot{\Delta}_j\phi\|_{L^2}^2\big)\\
		&+\alpha_3\|\nabla \dot{\Delta}_j\mathbf{u}\|_{L^2}^2+\alpha_4\|{\rm div}\dot{\Delta}_j\mathbf{u}\|_{L^2}^2\\ \lesssim&\frac{|\alpha_1|}{\alpha_2}\|\dot{\Delta}_jN_1\|_{L^2}\|\dot{\Delta}_j\tilde{\rho}\|_{L^2}+\|\dot{\Delta}_jN_2\|_{L^2}\|\dot{\Delta}_j\mathbf{u}\|_{L^2}+\frac{|\gamma|}{\alpha_2}\|\dot{\Delta}_j\phi\|_{L^2}\|\dot{\Delta}_jN_1\|_{L^2}.
	\end{split}
\end{equation}
Besides, taking the Fourier transform to $ \eqref{3.6}_1 $ to have that
\begin{equation}
	|\xi|^2\hat{\phi}+\hat{\phi}=\hat{\tilde{\rho}}.
\end{equation}
In the low frequency, it holds that $|\xi|\le2^j(j\le j_0$). When $j_0$ is small enough, there exist two positive constants $C_1$ and $C_2$ such that
\begin{equation}
	C_1\|\hat{\tilde{\rho}}\|_{L^2}\le\|\hat{\phi}\|_{L^2}\le C_2\|\hat{\tilde{\rho}}\|_{L^2}.
\end{equation}
Owing to Plancherel theorem, it yields
\begin{equation}
	C_1\|\tilde{\rho}\|_{L^2}\le\|\phi\|_{L^2}\le C_2\|\tilde{\rho}\|_{L^2}.
\end{equation}
Hence, we can transfer \eqref{3.10} to the following inequality
\begin{equation}\label{3.14}
	\begin{split}		&\frac{1}{2}\frac{d}{dt}\big(\frac{\alpha_1+\gamma}{\alpha_2}\|\dot{\Delta}_j\tilde{\rho}\|_{L^2}^2+\|\dot{\Delta}_j\mathbf{u}\|_{L^2}^2\big)+\alpha_3\|\nabla \dot{\Delta}_j\mathbf{u}\|_{L^2}^2+\alpha_4\|{\rm div}\dot{\Delta}_j\mathbf{u}\|_{L^2}^2\\
		\lesssim&\frac{|\alpha_1|+|\gamma|}{\alpha_2}\|\dot{\Delta}_jN_1\|_{L^2}\|\dot{\Delta}_j\tilde{\rho}\|_{L^2}+\|\dot{\Delta}_jN_2\|_{L^2}\|\dot{\Delta}_j\mathbf{u}\|_{L^2}.
	\end{split}
\end{equation}
After that, multiplying by $2^{-2j\sigma}$ and taking $\sup_{j\leq j_0}$($j_0$ small enough) on both hand side of
\eqref{3.14}, we have
\begin{equation*}
	\begin{split}		&\frac{1}{2}\frac{d}{dt}\Big\{\frac{\alpha_1+\gamma}{\alpha_2}(\|\tilde{\rho}\|_{\dot{B}^{-\sigma}_{2,\infty}}^l)^2+(\|\mathbf{u}\|_{\dot{B}^{-\sigma}_{2,\infty}}^l)^2\Big\} \lesssim\frac{|\alpha_1|+|\gamma|}{\alpha_2}\|N_1\|^l_{\dot{B}^{-\sigma}_{2,\infty}}\|\tilde{\rho}\|^l_{\dot{B}^{-\sigma}_{2,\infty}}+\|N_2\|^l_{\dot{B}^{-\sigma}_{2,\infty}}\|\mathbf{u}\|^l_{\dot{B}^{-\sigma}_{2,\infty}}.
	\end{split}
\end{equation*}
When $\alpha_1+\gamma=\frac{P'(\bar{\rho})+\gamma\bar{\rho}}{\bar{\rho}}>0$, namely $P'(\bar{\rho})+\gamma\bar{\rho}>0$, we can arrive at
\begin{equation}
	\frac{d}{dt}\big(\|(\tilde{\rho},\mathbf{u})\|_{\dot{B}^{-\sigma}_{2,\infty}}^l\big)^2\lesssim\|(N_1,N_2)\|_{\dot{B}^{-\sigma}_{2,\infty}}^l\|(\tilde{\rho},\mathbf{u})\|_{\dot{B}^{-\sigma}_{2,\infty}}^l.
\end{equation}
Consequently, integrating the above inequality from 0 to $t$, it holds
\begin{equation} \big(\|(\tilde{\rho},\mathbf{u})\|_{\dot{B}^{-\sigma}_{2,\infty}}^l\big)^2\lesssim\big(\|(\tilde{\rho}_0,\mathbf{u}_0)\|_{\dot{B}^{-\sigma}_{2,\infty}}^l\big)^2+\int_{0}^{t}\|(N_1,N_2)\|_{\dot{B}^{-\sigma}_{2,\infty}}^l\|(\tilde{\rho},\mathbf{u})(\tau)\|_{\dot{B}^{-\sigma}_{2,\infty}}^ld\tau.
\end{equation}
This completes the proof.
\end{proof}

\begin{proposition}\label{proposition 3.1}
	Let $(\tilde{\rho},\mathbf{u})$ be the solutions constructed in Section $2$. For any $1-\frac{d}{2}<\sigma\leq\sigma_0$, then there exists a constant $c_1>0$ depends on the norm of the initial data such that for all $t\geq 0$,
	\begin{equation}\label{4.1}
		\|(\tilde{\rho},\mathbf{u})(t,\cdot)\|_{\dot{B}_{2,\infty}^{-\sigma}}^l\leq c_1.
	\end{equation}
\end{proposition}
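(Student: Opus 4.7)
The plan is to combine Lemma \ref{le3.1} with a nonlinear Gronwall argument that exploits the already-established global bound $\mathcal{X}(t)\leq Cc_0$ from Theorem \ref{Theorem 1.1}. Setting $Z(t)\triangleq\|(\tilde\rho,\mathbf{u})(t)\|_{\dot B^{-\sigma}_{2,\infty}}^l$ and $M(t)\triangleq\sup_{s\in[0,t]}Z(s)$, Lemma \ref{le3.1} directly yields
$$Z(t)^2\lesssim Z(0)^2+M(t)\int_0^t\|(N_1,N_2)(\tau)\|_{\dot B^{-\sigma}_{2,\infty}}^l\,d\tau.$$
Taking the supremum on $[0,t]$ and absorbing the $M(t)$ by Young's inequality reduces the proposition to a pure nonlinear estimate, namely proving $\int_0^\infty\|(N_1,N_2)(\tau)\|_{\dot B^{-\sigma}_{2,\infty}}^l\,d\tau\lesssim c_0^2$.

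To bound the right-hand side, I would follow exactly the paraproduct philosophy already deployed in Sections 2.1--2.4: every term in $N_1$ and $N_2$ is a product (after Proposition \ref{A.13} handles the composition with $F$ and $k$), and each such product $fg$ is split via Bony decomposition $fg=T_fg+R(f,g)+T_gf$. One factor is absorbed into $\mathscr{E}_\infty(\tau)$ (uniformly bounded by $c_0$) and the other into $\mathscr{E}_1(\tau)$ (time-integrable), giving a bound of the form
$$\|(N_1,N_2)(\tau)\|_{\dot B^{-\sigma}_{2,\infty}}^l\lesssim\mathscr{E}_\infty(\tau)\mathscr{E}_1(\tau),$$
whose time integral is $\lesssim\mathcal{X}(\infty)^2\lesssim c_0^2$. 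Setting $c_1$ proportional to $Z(0)+c_0^2$ then finishes the proof.

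The role of the condition $1-\tfrac{d}{2}<\sigma\leq\sigma_0=\tfrac{2d}{p}-\tfrac{d}{2}$ is precisely to make these product estimates close. The upper bound $\sigma\leq\sigma_0$ is used through the embedding $\dot B^{d/p}_{p,1}\hookrightarrow \dot B^{d/2-d/p}_{2,1}\hookrightarrow\dot B^{-\sigma+d(1/2-1/p)}_{2,1}$ at low frequency, which lets us convert the high-frequency $L^p$-based norms in $\mathcal{X}$ into $L^2$-based norms compatible with $\dot B^{-\sigma}_{2,\infty}$. The lower bound $\sigma>1-\tfrac{d}{2}$ is needed so that the remainder term $R(f,g)$, whose natural regularity sits at $-\tfrac{d}{2}+$ something, is well-defined and summable in the low-frequency block.

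The main obstacle is to verify uniformly (in the full range of $\sigma$) that each paraproduct branch distributes the regularity correctly so that the "integrable" factor truly lands in one of the $L^1_t(\dot B^{d/2+1}_{2,1})$ or $L^1_t(\dot B^{d/p+1}_{p,1})$ spaces contained in $\mathscr{E}_1$. The delicate case is $\sigma$ close to the lower threshold $1-\tfrac{d}{2}$, where the remainder and the $T_gf$ branch can only be estimated after inserting a low/high-frequency split of $f$ and $g$ themselves, mirroring the argument used in (\ref{2.40})--(\ref{2.43}) and (\ref{2.50}) above, and keeping careful track of the $\ell^\infty$-type norm of $\dot B^{-\sigma}_{2,\infty}$ versus the $\ell^1$-type norms appearing elsewhere.
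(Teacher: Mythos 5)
There is a genuine gap in the central reduction. You claim that every term of $N_1,N_2$ obeys $\|(N_1,N_2)(\tau)\|_{\dot B^{-\sigma}_{2,\infty}}^l\lesssim\mathscr{E}_\infty(\tau)\mathscr{E}_1(\tau)$, so that the problem reduces to a bound on $\int_0^\infty\|(N_1,N_2)\|_{\dot B^{-\sigma}_{2,\infty}}^l\,d\tau$ independent of the negative norm. This fails for the terms in which the factor carrying the derivative is \emph{low-frequency}, e.g. $\mathbf{u}\cdot\nabla\tilde\rho^{\,l}$, $\tilde\rho\,{\rm div}\,\mathbf{u}^l$, $\mathbf{u}\cdot\nabla\mathbf{u}^l$, $F(\tilde\rho)\nabla\tilde\rho^{\,l}$, $k(\tilde\rho)\Delta\mathbf{u}^l$. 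The functionals $\mathscr{E}_\infty,\mathscr{E}_1$ only control the low-frequency part of the solution in regularities $\geq\frac d2-1$, while the target index $-\sigma$ lies strictly below $\frac d2-1$ (and can be as low as $\frac d2-\frac{2d}p$); at low frequencies a norm of lower regularity is \emph{not} controlled by one of higher regularity, so no Bony/paraproduct splitting can produce a $\dot B^{-\sigma}_{2,\infty}$ bound for these products out of $\mathscr{E}_\infty\mathscr{E}_1$ alone. The negative regularity must be paid for by one of the factors, and the only available source is $\|(\tilde\rho,\mathbf{u})\|^l_{\dot B^{-\sigma}_{2,\infty}}$ itself — precisely the quantity you are trying to bound. (For the terms with a \emph{high}-frequency factor your reasoning is fine: there the condition $\sigma\leq\sigma_0$ enters through Proposition \ref{A.14}, and one does get $\mathscr{E}_\infty\mathscr{E}_1$; this is also what the paper does.)

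Consequently the Young-inequality reduction "$M^2\lesssim Z(0)^2+M\cdot I$ with $I$ bounded a priori" is not available: the integrand necessarily contains $Z(\tau)=\|(\tilde\rho,\mathbf{u})(\tau)\|^l_{\dot B^{-\sigma}_{2,\infty}}$ linearly (via Corollary \ref{A.9}, which gives bounds of the form $\mathscr{E}_1(\tau)\,Z(\tau)$ and $\mathscr{E}_\infty(\tau)\mathscr{E}_1(\tau)\,Z(\tau)$ for the low-frequency-type terms). The paper's proof keeps this structure, arriving at
\begin{equation*}
Z(t)^2\lesssim Z(0)^2+\int_0^t\big(\mathscr{E}_1+\mathscr{E}_\infty\mathscr{E}_1\big)Z(\tau)^2\,d\tau+\int_0^t\mathscr{E}_\infty\mathscr{E}_1\,Z(\tau)\,d\tau,
\end{equation*}
and closes it with the time-integrability $\int_0^\infty(\mathscr{E}_1+\mathscr{E}_\infty\mathscr{E}_1)\,d\tau\lesssim c_0$ and the generalized Gronwall inequality of Proposition \ref{A.15}. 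Your argument can be repaired by replacing the claimed uniform bound $\mathscr{E}_\infty\mathscr{E}_1$ with this two-track estimate (Corollary \ref{A.9} for low-frequency second factors, Proposition \ref{A.14} for high-frequency ones) and then invoking the nonlinear Gronwall lemma instead of Young's inequality.
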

\begin{proof}
	To begin with, we recall the following inequality from Lemma \ref{le3.1}:
	\begin{equation}
		\big(\|(\tilde{\rho},\mathbf{u})\|_{\dot{B}^{-\sigma}_{2,\infty}}^l\big)^2\lesssim\big(\|(\tilde{\rho}_0,\mathbf{u}_0)\|_{\dot{B}^{-\sigma}_{2,\infty}}^l\big)^2+\int_{0}^{t}\|(N_1,N_2)\|_{\dot{B}^{-\sigma}_{2,\infty}}^l\|(\tilde{\rho},\mathbf{u})(\tau)\|_{\dot{B}^{-\sigma}_{2,\infty}}^ld\tau.
	\end{equation}
Thus we focus on the nonlinear norm $\|(N_1,N_2)\|_{\dot{B}^{-\sigma}_{2,\infty}}^l$. For convenience, we divide $N_1$ and $N_2$ into low-frequency and high-frequency as follows:
\begin{equation*}
	N_1^l\triangleq-\mathbf{u}\cdot\nabla \tilde{\rho}^l-\tilde{\rho}{\rm div}\mathbf{u}^l,
\end{equation*}
\begin{equation*}
	N_2^l\triangleq-\mathbf{u}\cdot\nabla \mathbf{u}^l-F(\tilde{\rho})\nabla \tilde{\rho}^l-\alpha_3k(\tilde{\rho})\Delta \mathbf{u}^l-\alpha_4k(\tilde{\rho})\nabla{\rm div}\mathbf{u}^l,
\end{equation*}
and
\begin{equation*}
	N_1^h\triangleq-\mathbf{u}\cdot\nabla \tilde{\rho}^h-\tilde{\rho}{\rm div}\mathbf{u}^h,
\end{equation*}
\begin{equation*}
	N_2^h\triangleq-\mathbf{u}\cdot\nabla \mathbf{u}^h-F(\tilde{\rho})\nabla \tilde{\rho}^h-\alpha_3k(\tilde{\rho})\Delta \mathbf{u}^h-\alpha_4k(\tilde{\rho})\nabla{\rm div}\mathbf{u}^h.
\end{equation*}
We first handle $N_1^l$ and $N_2^l$. By using Corollary \ref{A.9} and the decomposition $\mathbf{u}=\mathbf{u}^l+\mathbf{u}^h$, we find that
\begin{equation*}
	\|\mathbf{u}^l\cdot\nabla\tilde{\rho}^l\|_{\dot{B}_{2,\infty}^{-\sigma}}\lesssim\|\nabla\tilde{\rho}^l\|_{\dot{B}^{\frac{d}{p}}_{p,1}}\|\mathbf{u}^l\|_{\dot{B}_{2,\infty}^{-\sigma}}\lesssim\|\tilde{\rho}\|^l_{\dot{B}^{\frac{d}{2}+1}_{2,1}}\|\mathbf{u}\|^l_{\dot{B}_{2,\infty}^{-\sigma}},
\end{equation*}
as well as
\begin{equation*}
	\|\mathbf{u}^h\cdot\nabla\tilde{\rho}^l\|_{\dot{B}_{2,\infty}^{-\sigma}}\lesssim\|\mathbf{u}^h\|_{\dot{B}^{\frac{d}{p}}_{p,1}}\|\nabla\tilde{\rho}^l\|_{\dot{B}_{2,\infty}^{-\sigma}}\lesssim\|\mathbf{u}\|^h_{\dot{B}^{\frac{d}{p}+1}_{p,1}}\|\tilde{\rho}\|^l_{\dot{B}_{2,\infty}^{-\sigma}}.
\end{equation*}
Combining with the above two inequalities, we get
\begin{equation}\label{3.20}
	\|\mathbf{u}\cdot\nabla\tilde{\rho}^l\|_{\dot{B}_{2,\infty}^{-\sigma}}\lesssim(\|\tilde{\rho}^l\|_{\dot{B}^{\frac{d}{2}+1}_{2,1}}+\|\mathbf{u}^h\|_{\dot{B}^{\frac{d}{p}+1}_{p,1}})\|(\tilde{\rho},\mathbf{u})\|^l_{\dot{B}_{2,\infty}^{-\sigma}}\lesssim\mathscr{E}_1(t)\|(\tilde{\rho},\mathbf{u})\|^l_{\dot{B}_{2,\infty}^{-\sigma}}.
\end{equation}
Besides, $\tilde{\rho}{\rm div}\mathbf{u}^l$ and $\mathbf{u}\cdot\nabla \mathbf{u}^l$ are bounded similarly as follows:
\begin{equation}
	\|\tilde{\rho}{\rm div}\mathbf{u}^l\|_{\dot{B}_{2,\infty}^{-\sigma}}\lesssim(\|\mathbf{u}\|^l_{\dot{B}^{\frac{d}{2}+1}_{2,1}}+\|\tilde{\rho}\|^h_{\dot{B}^{\frac{d}{p}}_{p,1}})\|(\tilde{\rho},\mathbf{u})\|^l_{\dot{B}_{2,\infty}^{-\sigma}}\lesssim\mathscr{E}_1(t)\|(\tilde{\rho},\mathbf{u})\|^l_{\dot{B}_{2,\infty}^{-\sigma}},
\end{equation}
and
\begin{equation}
	\|\mathbf{u}\cdot\nabla \mathbf{u}^l\|^l_{\dot{B}_{2,\infty}^{-\sigma}}\lesssim(\|\mathbf{u}\|^l_{\dot{B}^{\frac{d}{2}+1}_{2,1}}+\|\mathbf{u}\|^h_{\dot{B}^{\frac{d}{p}+1}_{p,1}})\|\mathbf{u}\|^l_{\dot{B}_{2,\infty}^{-\sigma}}\lesssim\mathscr{E}_1(t)\|\mathbf{u}\|^l_{\dot{B}_{2,\infty}^{-\sigma}}.
\end{equation}
To control the term $F(\tilde{\rho})\nabla \tilde{\rho}^l$, we first write
\begin{equation}\label{3.23}
	F(\tilde{\rho})=F'(0)\tilde{\rho}+\widetilde{F}(\tilde{\rho})\tilde{\rho}\ \ {\rm with}\ \ \widetilde{F}(0)=0.
\end{equation}
Similar to \eqref{3.20}, the term $\tilde{\rho}\nabla\tilde{\rho}^l$ is bounded as
\begin{equation}\label{3.24}
	\|\tilde{\rho}\nabla\tilde{\rho}^l\|_{\dot{B}_{2,\infty}^{-\sigma}}\lesssim(\|\tilde{\rho}^l\|_{\dot{B}^{\frac{d}{2}+1}_{2,1}}+\|\tilde{\rho}^h\|_{\dot{B}^{\frac{d}{p}}_{p,1}})\|\tilde{\rho}\|^l_{\dot{B}_{2,\infty}^{-\sigma}}\lesssim\mathscr{E}_1(t)\|\tilde{\rho}\|^l_{\dot{B}_{2,\infty}^{-\sigma}}.
\end{equation}
Turn to $\widetilde{F}(\tilde{\rho})\tilde{\rho}\nabla\tilde{\rho}^l$, by using Proposition \ref{A.13}, Corollary \ref{A.9} and \eqref{3.24}, it holds that
\begin{equation}
	\begin{split}
		\|\widetilde{F}(\tilde{\rho})\tilde{\rho}\nabla\tilde{\rho}^l\|^l_{\dot{B}_{2,\infty}^{-\sigma}}\lesssim&\|\widetilde{F}(\tilde{\rho})\|_{\dot{B}^{\frac{d}{p}}_{p,1}}\|\tilde{\rho}\nabla\tilde{\rho}^l\|_{\dot{B}_{2,\infty}^{-\sigma}}\\
		\lesssim&\|\tilde{\rho}\|_{\dot{B}^{\frac{d}{p}}_{p,1}}(\|\tilde{\rho}^l\|_{\dot{B}^{\frac{d}{2}+1}_{2,1}}+\|\tilde{\rho}^h\|_{\dot{B}^{\frac{d}{p}}_{p,1}})\|\tilde{\rho}\|^l_{\dot{B}_{2,\infty}^{-\sigma}}
		\lesssim\mathscr{E}_\infty(t)\mathscr{E}_1(t)\|\tilde{\rho}\|^l_{\dot{B}_{2,\infty}^{-\sigma}}.
	\end{split}
\end{equation}
For $k(\tilde{\rho})\Delta \mathbf{u}^l$, we write $k(\tilde{\rho})=k'(0)\tilde{\rho}+\widetilde{k}(\tilde{\rho})\tilde{\rho}$, we have the results in the same way as $F(\tilde{\rho})\nabla \tilde{\rho}^l$:
\begin{equation}
	\|\tilde{\rho}\Delta \mathbf{u}^l\|_{\dot{B}_{2,\infty}^{-\sigma}}\lesssim(\|\mathbf{u}\|^l_{\dot{B}^{\frac{d}{2}+1}_{2,1}}+\|\tilde{\rho}\|^h_{\dot{B}^{\frac{d}{p}}_{p,1}})\|(\tilde{\rho},\mathbf{u})\|^l_{\dot{B}_{2,\infty}^{-\sigma}}\lesssim\mathscr{E}_1(t)\|(\tilde{\rho},\mathbf{u})\|^l_{\dot{B}_{2,\infty}^{-\sigma}},
\end{equation}
and
\begin{equation}\label{3.26}
	\begin{split}
		\|\widetilde{k}(\tilde{\rho})\tilde{\rho}\Delta \mathbf{u}^l\|_{\dot{B}_{2,\infty}^{-\sigma}}\lesssim&\|\widetilde{k}(\tilde{\rho})\|_{\dot{B}^{\frac{d}{p}}_{p,1}}\|\tilde{\rho}\Delta \mathbf{u}^l\|_{\dot{B}_{2,\infty}^{-\sigma}}\\	\lesssim&\|\tilde{\rho}\|_{\dot{B}^{\frac{d}{p}}_{p,1}}\!(\|\mathbf{u}\|^l_{\dot{B}^{\frac{d}{2}+1}_{2,1}}\!+\!\|\tilde{\rho}\|^h_{\dot{B}^{\frac{d}{p}}_{p,1}})\|(\tilde{\rho},\mathbf{u})\|^l_{\dot{B}_{2,\infty}^{-\sigma}}
		\lesssim\mathscr{E}_\infty(t)\mathscr{E}_1(t)\|(\tilde{\rho},\mathbf{u})\|^l_{\dot{B}_{2,\infty}^{-\sigma}}.
	\end{split}
\end{equation}
Of course, the last term $k(\tilde{\rho})\nabla{\rm div}\mathbf{u}^l$ has the same bound as in (\ref{3.26}).

Next, we start to deal with the terms of $N_1^h$ and $N_2^h$. Using Proposition \ref{A.14}, we have
\begin{equation*}
	\begin{split}
		\|\mathbf{u}\cdot\nabla \mathbf{u}^h\|^l_{\dot{B}_{2,\infty}^{-\sigma}}\lesssim&(\|\mathbf{u}\|^l_{\dot{B}^{\frac{d}{2}-1}_{2,1}}+\|\mathbf{u}\|^h_{\dot{B}^{\frac{d}{p}}_{p,1}})\|\nabla \mathbf{u}\|^h_{\dot{B}^{\frac{d}{p}-1}_{p,1}} \lesssim\|\mathbf{u}\|^l_{\dot{B}^{\frac{d}{2}-1}_{2,1}}\|\mathbf{u}\|^h_{\dot{B}^{\frac{d}{p}}_{p,1}}+\|\mathbf{u}\|^h_{\dot{B}^{\frac{d}{p}}_{p,1}}\|\mathbf{u}\|^h_{\dot{B}^{\frac{d}{p}}_{p,1}}\\ \lesssim&\|\mathbf{u}\|^l_{\dot{B}^{\frac{d}{2}-1}_{2,1}}\|\mathbf{u}\|^h_{\dot{B}^{\frac{d}{p}+1}_{p,1}}+\|\mathbf{u}\|^h_{\dot{B}^{\frac{d}{p}-1}_{p,1}}\|\mathbf{u}\|^h_{\dot{B}^{\frac{d}{p}+1}_{p,1}}
		\lesssim\mathscr{E}_\infty(t)\mathscr{E}_1(t).
	\end{split}
\end{equation*}
Similarly, $\mathbf{u}\cdot\nabla \tilde{\rho}^h$ and $\tilde{\rho}{\rm div}\mathbf{u}^h$ can be bounded by $\mathscr{E}_\infty(t)\mathscr{E}_1(t)$ from above in light of Proposition \ref{A.14}. Now turn to $F(\tilde{\rho})\nabla \tilde{\rho}^h$, we use \eqref{3.23} again. The first term is bounded that
\begin{equation}\label{3.29}
	\begin{split}
		\|\tilde{\rho}\nabla \tilde{\rho}^h\|^l_{\dot{B}_{2,\infty}^{-\sigma}}\lesssim&(\|\tilde{\rho}\|^l_{\dot{B}^{\frac{d}{2}-1}_{2,1}}+\|\tilde{\rho}\|^h_{\dot{B}^{\frac{d}{p}}_{p,1}})\|\nabla \tilde{\rho}\|^h_{\dot{B}^{\frac{d}{p}-1}_{p,1}}\\		\lesssim&(\|\tilde{\rho}\|^l_{\dot{B}^{\frac{d}{2}-1}_{2,1}}+\|\tilde{\rho}\|^h_{\dot{B}^{\frac{d}{p}}_{p,1}})\|\tilde{\rho}\|^h_{\dot{B}^{\frac{d}{p}}_{p,1}}
		\lesssim\mathscr{E}_\infty(t)\mathscr{E}_1(t)
	\end{split}
\end{equation}
by using Proposition \ref{A.14}. For $\widetilde{F}(\tilde{\rho})\tilde{\rho}\nabla \tilde{\rho}^h$, we use Proposition \ref{A.7}, Proposition \ref{A.13} and Proposition \ref{A.14} to have
\begin{equation}
	\begin{split}
		\|\widetilde{F}(\tilde{\rho})\tilde{\rho}\nabla \tilde{\rho}^h\|_{\dot{B}_{2,\infty}^{-\sigma}}\lesssim&(\|\widetilde{F}(\tilde{\rho})\tilde{\rho}\|^l_{\dot{B}^{\frac{d}{2}-1}_{2,1}}+\|\widetilde{F}(\tilde{\rho})\tilde{\rho}\|^h_{\dot{B}^{\frac{d}{p}}_{p,1}})\|\nabla \tilde{\rho}\|^h_{\dot{B}^{\frac{d}{p}-1}_{p,1}}\\
		\lesssim&(\|\widetilde{F}(\tilde{\rho})\tilde{\rho}\|^l_{\dot{B}^{\frac{d}{2}-1}_{2,1}}+\|\tilde{\rho}\|_{\dot{B}^{\frac{d}{p}}_{p,1}}\|\tilde{\rho}\|_{\dot{B}^{\frac{d}{p}}_{p,1}})\|\tilde{\rho}\|^h_{\dot{B}^{\frac{d}{p}}_{p,1}}.
	\end{split}
\end{equation}
In order to bound  $\|\widetilde{F}(\tilde{\rho})\tilde{\rho}\|^l_{\dot{B}^{\frac{d}{2}-1}_{2,1}}$, we use Bony decomposition and \eqref{a.20} of Proposition \ref{A.13} to acquire
\begin{equation}
	\begin{split}
		\|T_{\widetilde{F}(\tilde{\rho})}\tilde{\rho}+R(\widetilde{F}(\tilde{\rho}),\tilde{\rho})\|^l_{\dot{B}^{\frac{d}{2}-1}_{2,1}}\lesssim&\|\widetilde{F}(\tilde{\rho})\|_{\dot{B}^{\frac{d}{p}-1}_{p,1}}\|\tilde{\rho}\|_{\dot{B}^{\frac{d}{p}}_{p,1}}\\
		\lesssim&(1+\|\tilde{\rho}\|_{\dot{B}^{\frac{d}{p}}_{p,1}})\|\tilde{\rho}\|_{\dot{B}^{\frac{d}{p}-1}_{p,1}}\|\tilde{\rho}\|_{\dot{B}^{\frac{d}{p}}_{p,1}}
		\lesssim(\mathscr{E}_\infty(t))^2,
	\end{split}
\end{equation}
and
\begin{equation}
	\|T_{\tilde{\rho}}\widetilde{F}(\tilde{\rho})\|^l_{\dot{B}^{\frac{d}{2}-1}_{2,1}}\lesssim\|\tilde{\rho}\|_{\dot{B}^{\frac{d}{p}-1}_{p,1}}\|\widetilde{F}(\tilde{\rho})\|_{\dot{B}^{\frac{d}{p}}_{p,1}}\lesssim\|\tilde{\rho}\|_{\dot{B}^{\frac{d}{p}-1}_{p,1}}\|\tilde{\rho}\|_{\dot{B}^{\frac{d}{p}}_{p,1}}\lesssim(\mathscr{E}_\infty(t))^2.
\end{equation}
Hence,
\begin{equation}\label{3.33}
	\begin{split}
		\|\widetilde{F}(\tilde{\rho})\tilde{\rho}\nabla \tilde{\rho}^h\|_{\dot{B}_{2,\infty}^{-\sigma}}\lesssim&(\|\widetilde{F}(\tilde{\rho})\tilde{\rho}\|^l_{\dot{B}^{\frac{d}{2}-1}_{2,1}}+\|\tilde{\rho}\|_{\dot{B}^{\frac{d}{p}}_{p,1}}\|\tilde{\rho}\|_{\dot{B}^{\frac{d}{p}}_{p,1}})\|\tilde{\rho}\|^h_{\dot{B}^{\frac{d}{p}}_{p,1}}\\
		\lesssim&(\mathscr{E}_\infty(t))^2\mathscr{E}_1(t)\lesssim\mathscr{E}_\infty(t)\mathscr{E}_1(t).
	\end{split}
\end{equation}
Then combining \eqref{3.29} and \eqref{3.33}, we have
\begin{equation}
	\|\mathbf{u}\cdot\nabla \tilde{\rho}^h\|^l_{\dot{B}_{2,\infty}^{-\sigma}}\lesssim\mathscr{E}_\infty(t)\mathscr{E}_1(t).
\end{equation}
Similarly, we estimate the term $k(\tilde{\rho})\Delta \mathbf{u}^h$ as follows:
\begin{equation}
	\begin{split}
		\|\tilde{\rho}\Delta \mathbf{u}^h\|_{\dot{B}_{2,\infty}^{-\sigma}}\lesssim&(\|\tilde{\rho}\|^l_{\dot{B}^{\frac{d}{2}-1}_{2,1}}+\|\tilde{\rho}\|^h_{\dot{B}^{\frac{d}{p}}_{p,1}})\|\Delta \mathbf{u}\|^h_{\dot{B}^{\frac{d}{p}-1}_{p,1}}\\
		\lesssim&(\|\tilde{\rho}\|^l_{\dot{B}^{\frac{d}{2}-1}_{2,1}}+\|\tilde{\rho}\|^h_{\dot{B}^{\frac{d}{p}}_{p,1}})\|\mathbf{u}\|^h_{\dot{B}^{\frac{d}{p}+1}_{p,1}}
		\lesssim\mathscr{E}_\infty(t)\mathscr{E}_1(t),
	\end{split}
\end{equation}
and
\begin{equation}
	\begin{split}
		\|\widetilde{k}(\tilde{\rho})\tilde{\rho}\Delta \mathbf{u}^h\|_{\dot{B}_{2,\infty}^{-\sigma}}\lesssim(\|\widetilde{k}(\tilde{\rho})\tilde{\rho}\|^l_{\dot{B}^{\frac{d}{2}-1}_{2,1}}+\|\widetilde{k}(\tilde{\rho})\tilde{\rho}\|^h_{\dot{B}^{\frac{d}{p}}_{p,1}})\|\nabla \tilde{\rho}\|^h_{\dot{B}^{\frac{d}{p}-1}_{p,1}}\lesssim\mathscr{E}_\infty(t)\mathscr{E}_1(t).
	\end{split}
\end{equation}
Therefore, it holds that
\begin{equation}
	\|k(\tilde{\rho})\Delta \mathbf{u}^h\|_{\dot{B}_{2,\infty}^{-\sigma}}\lesssim\mathscr{E}_\infty(t)\mathscr{E}_1(t).
\end{equation}
The last term $k(\tilde{\rho})\nabla{\rm div}\mathbf{u}^h$ is also bounded with $\mathscr{E}_\infty(t)\mathscr{E}_1(t)$ in the same way.
Now, we recall (3.19) and combine all the above inequalities to acquire
\begin{equation}
	\begin{split}
		\big(\|(\tilde{\rho},\mathbf{u})\|_{\dot{B}^{-\sigma}_{2,\infty}}^l\big)^2\lesssim&\big(\|(\tilde{\rho}_0,\mathbf{u}_0)\|_{\dot{B}^{-\sigma}_{2,\infty}}^l\big)^2 +\int_{0}^{t}(\mathscr{E}_1(t)+\mathscr{E}_\infty(t)\mathscr{E}_1(t))\big(\|(\tilde{\rho},\mathbf{u})(\tau)\|_{\dot{B}^{-\sigma}_{2,\infty}}^l\big)^2d\tau\\
		&+\int_{0}^{t}\mathscr{E}_\infty(t)\mathscr{E}_1(t)\|(\tilde{\rho},\mathbf{u})(\tau)\|_{\dot{B}^{-\sigma}_{2,\infty}}^ld\tau.
	\end{split}
\end{equation}
According to the definition of $\mathscr{E}_\infty(t)$ and $\mathscr{E}_1(t)$, it is obvious to see that
\begin{equation}
	\int_{0}^{t}\mathscr{E}_1(t)+\mathscr{E}_\infty(t)\mathscr{E}_1(t)d\tau\lesssim\int_{0}^{t}\mathscr{E}_1(t)d\tau+\sup_{t>0}\mathscr{E}_\infty(t)\int_{0}^{t}\mathscr{E}_1(t)d\tau\lesssim c_0.
\end{equation}
Eventually, through generalized Gronwall inequality in Proposition \ref{A.15}, we have the following
\begin{equation}
	\|(\tilde{\rho},\mathbf{u})(t,\cdot)\|_{\dot{B}_{2,\infty}^{-\sigma}}^l\leq c_1,\ {\rm for\ any}\ t>0,
\end{equation}
where $c_1>0$ depends on the norm of the initial data.
\end{proof}

\subsection{Lyapunov-type differential inequality}
\quad\quad In this subsection, we develop the Lyapunov-type inequality in time for energy norms, which leads to the time-decay estimates. On the one hand, owing to $-\sigma<\frac{d}{2}-1\le\frac{d}{p}<\frac{d}{2}+1$ and interpolation inequality, one has
\begin{equation}\label{4.32}
\|(\tilde{\rho},\mathbf{u})\|_{\dot B_{2,1}^{\frac{d}{2}-1}}^l\leq C\big(\|(\tilde{\rho},\mathbf{u})\|_{\dot B_{2,\infty}^{-\sigma}}\big)^{\eta_1}\big(\|(\tilde{\rho},\mathbf{u})\|_{\dot B_{2,1}^{\frac{d}{2}+1}}^l\big)^{1-\eta_1},
\end{equation}
where $\eta_1=\frac{4}{d+2\sigma+2}\in(0,1)$.
It together with Proposition \ref{proposition 3.1} results that
\begin{equation}\label{4.33}
\|(\tilde{\rho},\mathbf{u})\|_{\dot B_{2,1}^{\frac{d}{2}+1}}^l\geq c_1(\|(\tilde{\rho},\mathbf{u})\|_{\dot B_{2,1}^{\frac{d}{2}-1}}^l)^{\frac{1}{1-\eta_1}}.
\end{equation}
On the other hand, due to $\mathcal{X}(t)<2Cc_0<<1$ (where $c_0$ is small enough) and embedding relation in the high frequency, it is easy to see that
\begin{equation}\label{4.34}
\|\tilde{\rho}\|_{\dot B_{p,1}^{\frac{d}{p}}}^h\geq C\big(\|\tilde{\rho}\|_{\dot B_{p,1}^{\frac{d}{p}}}^h\big)^{\frac{1}{1-\eta_1}},\quad
\|\mathbf{u}\|_{\dot B_{p,1}^{\frac{d}{p}+1}}^h\geq C\big(\|\mathbf{u}\|_{\dot B_{p,1}^{\frac{d}{p}-1}}^h\big)^{\frac{1}{1-\eta_1}}.
\end{equation}
Thus, there exists a constant $\tilde{c}_0>0$ such that the following Lyapunov-type inequality holds
\begin{equation}\label{4.35}
	\begin{split}
		\frac{d}{dt}&\bigg(\|(\tilde{\rho},\mathbf{u})\|^l_{\dot B_{2,1}^{\frac{d}{2}-1}}+\|\tilde{\rho}\|_{\dot B_{p,1}^{\frac{d}{p}}}^h+\|\mathbf{u}\|_{\dot B_{p,1}^{\frac{d}{p}-1}}^h\bigg)\\
		&+\tilde{c}_0\bigg(\|(\tilde{\rho},\mathbf{u})\|^l_{\dot B_{2,1}^{\frac{d}{2}-1}}+\|\tilde{\rho}\|_{\dot B_{p,1}^{\frac{d}{p}}}^h+\|\mathbf{u}\|_{\dot B_{p,1}^{\frac{d}{p}-1}}^h\bigg)^{1+\frac{4}{d+2\sigma-2}}\leq 0.
	\end{split}
\end{equation}

\subsection{Decay estimate}
\quad\quad Solving the differential inequality (\ref{4.35}) directly, one gets
\begin{equation}\label{4.36}
\|(\tilde{\rho},\mathbf{u})\|^l_{\dot B_{2,1}^{\frac{d}{2}-1}}+\|\tilde{\rho}\|_{\dot B_{p,1}^{\frac{d}{p}}}^h+\|\mathbf{u}\|_{\dot B_{p,1}^{\frac{d}{p}-1}}^h\lesssim(1+t)^{-\frac{d+2\sigma-2}{4}}.
\end{equation}
For any $-\sigma-d(\frac{1}{2}-\frac{1}{p})<\sigma_1<\frac{d}{p}-1$, by the embedding theorem and interpolation inequality, we have
\begin{equation}\label{4.37}
\|(\tilde{\rho},\mathbf{u})\|_{\dot B_{p,1}^{\sigma_1}}^l\lesssim\|(\tilde{\rho},\mathbf{u})\|_{\dot B_{2,1}^{\sigma_1+d(\frac{1}{2}-\frac{1}{p})}}^l\lesssim(\|(\tilde{\rho},\mathbf{u})\|_{\dot B_{2,\infty}^{-\sigma}}^l)^{\eta_2}(\|(\tilde{\rho},\mathbf{u})\|_{\dot B_{2,1}^{\frac{d}{2}-1}}^l)^{1-\eta_2},
\end{equation}
where $\eta_2=\frac{\frac{d}{p}-1-\sigma_1}{\frac{d}{2}-1+\sigma}\in(0,1)$,  which combining with Proposition \ref{proposition 3.1} implies
\begin{equation}\label{4.38}
\|(\tilde{\rho},\mathbf{u})\|_{\dot B_{p,1}^{\sigma}}^l\lesssim(1+t)^{-\frac{(d+2\sigma_1-2)(1-\eta_2)}{4}}=(1+t)^{-\frac{d}{2}(\frac{1}{2}-\frac{1}{p})-\frac{\sigma_1+\sigma}{2}}.
\end{equation}
By virtue of $-\sigma-d(\frac{1}{2}-\frac{1}{p})<\sigma_1<\frac{d}{p}-1<\frac{d}{p}$, we find that
\begin{equation}\label{4.39}
\|(\tilde{\rho},\mathbf{u})\|^h_{\dot B_{p,1}^{\sigma_1}}\lesssim(\|\tilde{\rho}\|_{\dot B_{p,1}^{\frac{d}{p}}}^h+\|\mathbf{u}\|_{\dot B_{p,1}^{\frac{d}{p}-1}}^h)\lesssim(1+t)^{-\frac{d+2\sigma-2}{4}},
\end{equation}
and (\ref{4.38}) yields
\begin{equation}\label{3.40}
\begin{split}
	\|(\tilde{\rho},\mathbf{u})\|_{\dot B_{p,1}^{\sigma_1}}\lesssim&\|(\tilde{\rho},\mathbf{u})\|_{\dot B_{p,1}^{\sigma_1}}^l+\|(\tilde{\rho},\mathbf{u})\|_{\dot B_{p,1}^{\sigma_1}}^h\\
	\lesssim&(1+t)^{-\frac{d}{2}(\frac{1}{2}-\frac{1}{p})-\frac{\sigma_1+\sigma}{2}}+(1+t)^{-\frac{d+2\sigma-2}{4}}
	\lesssim(1+t)^{-\frac{d}{2}(\frac{1}{2}-\frac{1}{p})-\frac{\sigma_1+\sigma}{2}}.
\end{split}
\end{equation}
Hence, thanks to the embedding relation $\dot B_{p,1}^0\hookrightarrow L^p$, one also has
\begin{equation}\label{3.41}
\|\Lambda^{\sigma_1}(\tilde{\rho},\mathbf{u})\|_{L^p}\lesssim(1+t)^{-\frac{d}{2}(\frac{1}{2}-\frac{1}{p})-\frac{\sigma_1+\sigma}{2}}.
\end{equation}
This proves Theorem \ref{Theorem 1.2}.

\section{Instability}

\quad\quad In this section, we shall devote to study the instability of 3D linear and nonlinear problem for the system (\ref{1.1}) when $P'(\bar\rho)+\gamma\bar\rho<0$ based on the idea in \cite{guo2,jang,jiang3,jiang4,wangy}. The main process is similar to that in \cite{chen5} for a hyperbolic-parabolic model of vasculogenesis (has the damped mechanism). In particular, due to the nonlinear terms in \cite{chen5} are of the form $f\nabla g$, they can deduce the instability for the nonlinear problem in $H^3$-framework. In our case, if one studies the unknowns $(\rho,\mathbf{u})$ as usual, one can hardly deduce the instability in $H^3$-framework due to the presence of nonlinear term as $\rho\nabla^2\mathbf{u}$. Thus, sometimes we will study the conservative unknowns $(\rho,\mathbf{m})$ with the momentum $\mathbf{m}=(\rho+\bar\rho)\mathbf{u}$ although these two kinds of unknowns is equivalent for the small perturbation together with non-vacuum case. For simplicity, we denote $H^k:=H^k(\mathbb{R}^3)$ and $L^p:=L^p(\mathbb{R}^3)$ in the following.

\subsection{Linear instability}
\quad\quad  First, from (\ref{2.24}), one can immediately get the following:
After direct computations, we can get the following estimates for the eigenvalues.
\begin{lemma}\label{l 4.1}
When $P'(\bar\rho)+\gamma\bar\rho<0$ and $|\xi|\ll1$, we have
\begin{equation}\label{4.1}
\begin{array}{rl}
     \lambda_+(\xi)=\sqrt{-(P'(\bar\rho)+\gamma\bar\rho)}|\xi|-\frac{\eta}{2}|\xi|^2+\mathcal{O}(|\xi|^3),\\ \lambda_-(\xi)=-\sqrt{-(P'(\bar\rho)+\gamma\bar\rho)}|\xi|-\frac{\eta}{2}|\xi|^2-\mathcal{O}(|\xi|^3).
     \end{array}
     \end{equation}
When $|\xi|\gg1$, we have
     \begin{equation}\label{4.2}
    \lambda_+(\xi)=-\frac{P'(\bar\rho)}{\eta}+\mathcal{O}(|\xi|^{-2}),\ \ \
    \lambda_-(\xi)=-\eta|\xi|^2+\frac{P'(\bar\rho)}{\eta}+\mathcal{O}(|\xi|^{-2}).
    \end{equation}
When $0<\epsilon\leq|\xi|\leq R$ with any fixed constants $\epsilon$ and $R$, then there exists a $b>0$ such that
    \begin{equation}\label{4.3}
    Re(\lambda_\pm(\xi))\leq -b.
    \end{equation}
\end{lemma}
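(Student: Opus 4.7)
The plan is to apply the quadratic formula to the characteristic polynomial (2.24), namely
\[
\lambda_\pm(\xi) = \frac{-\eta|\xi|^2 \pm \sqrt{\delta(\xi)}}{2}, \qquad \delta(\xi) = \eta^2|\xi|^4 - 4|\xi|^2 \beta(\xi),
\]
with the convenient abbreviation $\beta(\xi) := P'(\bar\rho) + \gamma\bar\rho - \gamma\bar\rho\tfrac{|\xi|^2}{1+|\xi|^2} = P'(\bar\rho) + \tfrac{\gamma\bar\rho}{1+|\xi|^2}$, and then to Taylor-expand $\sqrt{\delta(\xi)}$ in the two asymptotic regimes. Under the hypothesis $P'(\bar\rho)+\gamma\bar\rho<0$, we have $\beta(0)<0$, so $\delta(\xi)>0$ and the eigenvalues are real near $\xi=0$; whereas for $|\xi|\to\infty$, $\beta(\xi)\to P'(\bar\rho)$ and $\delta(\xi)$ is dominated by the $\eta^2|\xi|^4$ term.

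For (4.1), I would factor $\sqrt{\delta(\xi)} = 2|\xi|\sqrt{-\beta(\xi)}\,\sqrt{1 + \eta^2|\xi|^2/(-4\beta(\xi))}$ and perform two nested Taylor expansions: $\sqrt{-\beta(\xi)} = \sqrt{-(P'(\bar\rho)+\gamma\bar\rho)} + O(|\xi|^2)$, and $\sqrt{1+x} = 1 + x/2 + O(x^2)$ with $x = O(|\xi|^2)$. Substituting into the quadratic formula produces $\pm\sqrt{-(P'(\bar\rho)+\gamma\bar\rho)}\,|\xi| - \tfrac{\eta}{2}|\xi|^2 + O(|\xi|^3)$ as stated.

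For (4.2), I would instead factor $\sqrt{\delta(\xi)} = \eta|\xi|^2\,\sqrt{1 - 4\beta(\xi)/(\eta^2|\xi|^2)}$ and note $\beta(\xi)=P'(\bar\rho)+O(|\xi|^{-2})$, so the radicand is a small perturbation of $1$. Applying $\sqrt{1-x} = 1 - x/2 + O(x^2)$ yields $\sqrt{\delta(\xi)} = \eta|\xi|^2 - 2P'(\bar\rho)/\eta + O(|\xi|^{-2})$; substituting into the quadratic formula, the leading $\eta|\xi|^2$ cancels in $\lambda_+$ to give $-P'(\bar\rho)/\eta + O(|\xi|^{-2})$, while it adds in $\lambda_-$ to give $-\eta|\xi|^2 + P'(\bar\rho)/\eta + O(|\xi|^{-2})$.

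For (4.3), the argument is continuity plus compactness: $\lambda_\pm$ depend continuously on $\xi$ on the annulus $\{\epsilon\le|\xi|\le R\}$, so a uniform upper bound follows from a strict pointwise bound on $\mathrm{Re}(\lambda_\pm)$. The Vieta identity $\lambda_++\lambda_- = -\eta|\xi|^2 \le -\eta\epsilon^2$ bounds the average real part strictly below zero, and when $\delta(\xi)<0$ the eigenvalues form a complex conjugate pair with common real part $-\eta|\xi|^2/2 \le -\eta\epsilon^2/2$. The one technical subtlety is the transition region where $\delta(\xi)$ changes sign, but since the transition is continuous and the annulus compact, one extracts a uniform constant $b>0$. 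I expect the main obstacle in the overall proof to be not any single bound but rather writing the expansions carefully enough to confirm the leading coefficients and signs in (4.1) and (4.2), since several layers of Taylor expansion have to be composed and reconciled.
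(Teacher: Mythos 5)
Your treatment of \eqref{4.1} and \eqref{4.2} is correct and is exactly the ``direct computation'' the paper alludes to (the paper states Lemma \ref{l 4.1} without any written proof): the quadratic formula applied to \eqref{2.24} with $\beta(\xi)=P'(\bar\rho)+\frac{\gamma\bar\rho}{1+|\xi|^2}$, followed by the two Taylor expansions you describe, reproduces both asymptotic formulas with the right coefficients and signs.

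Your argument for \eqref{4.3}, however, has a genuine gap. The Vieta identity $\lambda_++\lambda_-=-\eta|\xi|^2$ only controls the \emph{sum} of the roots; when $\delta(\xi)>0$ the roots are real, and a negative sum is perfectly compatible with $\lambda_+>0$. To conclude that both real roots are negative you also need the product $\lambda_+\lambda_-=|\xi|^2\beta(\xi)$ to be positive, i.e.\ $\beta(\xi)>0$ on the whole annulus, and under the standing hypothesis $P'(\bar\rho)+\gamma\bar\rho<0$ this fails for small $\epsilon$: $\beta(\xi)<0$ on the entire band $0<|\xi|<\xi_c$ with $\xi_c^2=-\frac{\gamma\bar\rho}{P'(\bar\rho)}-1$ when $P'(\bar\rho)>0$ (and on \emph{all} frequencies when $P'(\bar\rho)\le0$), and there $\lambda_+(\xi)>0$ --- which is precisely what your own expansion \eqref{4.1} asserts for small $|\xi|$. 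So the pointwise negativity you propose to upgrade by continuity and compactness is simply false on part of the annulus whenever $\epsilon<\xi_c$; no compactness argument can repair that, and the real delicate point is the sign change of $\beta$, not of $\delta$. What your method does prove is \eqref{4.3} for annuli lying strictly above the unstable band (which also tacitly requires $P'(\bar\rho)>0$, as does the sign of the limit in \eqref{4.2}): there the negative sum together with the positive product handles the real-root case, the complex case gives common real part $-\eta|\xi|^2/2$, and compactness then yields a uniform $b>0$. You should state this restriction on $\epsilon$ explicitly --- the lemma's phrase ``any fixed constants $\epsilon$ and $R$'' is itself in tension with \eqref{4.1} --- and since the paper supplies no proof of \eqref{4.3} to lean on, the burden of delimiting the frequency range correctly falls on your write-up.
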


Lemma \ref{l 4.1} and the continuity of $\lambda_\pm(|\xi|)$ give that there exists a constant $\Theta>0$ and some $\xi_0\neq0$ such that
\begin{equation}\label{4.4}
\Theta=\max\{\sup\limits_{0<|\xi|<\infty}Re\lambda_+(|\xi|),\sup\limits_{0<|\xi|<\infty}Re\lambda_-(|\xi|)\}=\max\{Re\lambda_+(|\xi_0|),Re\lambda_-(|\xi_0|)\}.
\end{equation}
Moreover, if we suppose that $\lambda_0(|\xi|)$ is the eigenvalue whose real part is equal to $\Theta$ at $\xi_0$, then
\begin{equation}\label{4.5}
\Theta=Re\lambda_0(|\xi|)\geq Re\lambda_\pm(|\xi|),\ \ {\rm in\ some\ neighborhood\ of}\ \xi_0.
\end{equation}
Therefore, when $P'(\bar\rho)+\gamma\bar\rho<0$, one can immediately obtain for any $\eta_1$ and $\eta_2$ satisfying $0<|\eta_1|<|\eta_2|<\infty$ that
\begin{equation}\label{4.6}
Re\lambda_\pm\leq \Theta,\ \ \|e^{tA(|\xi|)}\hat{U}_0\|_{L^2(\eta_1\leq|\xi|\leq\eta_2)}\lesssim e^{\Theta t}\|U_0\|_{L^2},\ {\rm for\ any}\ t\geq0.
\end{equation}
Here $U:=(\rho-\bar\rho,\mathbf{u})$ or $U:=(\rho-\bar\rho,\mathbf{m})$ with the momentum $\mathbf{m}=\rho\mathbf{u}$.

With (\ref{4.5}) and (\ref{4.6}) in hand, we can derive the linear instability for some special initial data. In fact, from (\ref{4.5}), we know that for any $\bar\Theta\in(0,\Theta)$, there exists some positive constant $\bar\zeta=\bar\zeta(\bar\Theta)\leq\frac{|\xi_0|}{4}$ such that
\begin{equation}\label{4.7}
e^{t\lambda_0(\xi)}\geq e^{(\Theta-\bar\Theta)t},\ \ {\rm when}\ |\xi-\xi_0|\leq 2\bar\zeta.
\end{equation}
Now, we select the special initial data as
\begin{equation}\label{4.8}
\widehat{\rho_{0,\bar\Theta}^l}=\frac{\Psi(\xi)}{\|\Psi\|_{L^2}},\ \ \ \ \widehat{v_{0,\bar\Theta}^l}=-\frac{\lambda_0(|\xi|)\Psi(|\xi|)}{\bar\rho|\xi|\|\Psi\|_{L^2}},
\end{equation}
where $\Psi\in C_0^\infty(\mathbb{R}_\xi^3)$ and is a radial function satisfying $\Psi(\xi)=1$ for $|\xi-\xi_0|<\bar\zeta$ and $\Psi(\xi)=0$ for $|\xi-\xi_0|>2\bar\zeta$.

Then, we can get the linear instability as follows.

\begin{proposition}\label{l 4.2}
Assume that $P'(\bar\rho)+\gamma\bar\rho<0$. Let
\begin{equation}\label{4.9}
\rho_{\bar\Theta}^l=\mathcal{F}^{-1}[e^{\lambda_0(|\xi|)t}\widehat{\rho_{0,\bar\Theta}^l}],\ \ \ \mathbf{u}_{\bar\Theta}^l=\Lambda^{-1}\nabla\mathcal{F}^{-1}[e^{\lambda_0(|\xi|)t}\widehat{v_{0,\bar\Theta}}].
\end{equation}
Then $(\rho_{\bar\Theta}^l,\mathbf{u}_{\bar\Theta}^l)$ is a solution of the linear system (\ref{2.19}) with the initial data $(\rho_{0,\bar\Theta}^l,\mathbf{u}_{0,\bar\Theta}^l)$ depending on $\bar\Theta$. In addition, for any $\bar\Theta\in(0,\frac{\Theta}{2})$, one has
\begin{equation}\label{4.10}
\begin{split}
	e^{(\Theta-\bar\Theta)t}\|\rho_{0,\bar\Theta}^l\|_{L^2}\leq\|\rho_{\bar\Theta}^l\|\leq e^{\Theta t}\|\rho_{0,\bar\Theta}^l\|_{L^2},\\
e^{(\Theta-\bar\Theta)t}\|\mathbf{u}_{0,\bar\Theta}^l\|_{L^2}\leq\|\mathbf{u}_{\bar\Theta}^l\|\leq e^{\Theta t}\|\mathbf{u}_{0,\bar\Theta}^l\|_{L^2}.
\end{split}
\end{equation}
\end{proposition}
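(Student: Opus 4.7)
The plan is to verify that the Fourier ansatz in \eqref{4.9} produces a solution of the reduced linear system \eqref{2.19}, and then read off the two-sided $L^2$ bounds directly from Plancherel's identity, exploiting the compact Fourier support of the cut-off $\Psi$.

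First, note that $\mathbf{u}_{\bar\Theta}^l=\Lambda^{-1}\nabla\mathcal{F}^{-1}[e^{\lambda_0(|\xi|)t}\widehat{v_{0,\bar\Theta}^l}]$ is a pure gradient, so $\mathcal{P}\mathbf{u}_{\bar\Theta}^l\equiv 0$ and the $\mathcal{P}$-component of \eqref{2.19} is trivial. The compressive part is equivalent to the reduced $(\tilde\rho,v)$-system \eqref{2.22} via $v=-\Lambda^{-1}\mathrm{div}\,\mathbf{u}$, whose Fourier transform is the ODE system $\partial_t(\widehat{\tilde\rho},\widehat v)^T=A(\xi)(\widehat{\tilde\rho},\widehat v)^T$ from \eqref{2.23}. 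Hence I only need $(\widehat{\rho_{0,\bar\Theta}^l},\widehat{v_{0,\bar\Theta}^l})^T$ to be an eigenvector of $A(\xi)$ for the eigenvalue $\lambda_0(|\xi|)$ on $\mathrm{supp}\,\Psi$. The choice \eqref{4.8}, $\widehat{v_{0,\bar\Theta}^l}=-\lambda_0\widehat{\rho_{0,\bar\Theta}^l}/(\bar\rho|\xi|)$, is precisely the first row of the eigenvalue equation $A(\xi)w=\lambda_0 w$; after substitution, the second row becomes $\lambda_0^2+\eta|\xi|^2\lambda_0+\bar\rho|\xi|A(\xi)_{21}=0$, which is exactly the characteristic identity \eqref{2.24}. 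Thus the ansatz solves \eqref{2.19}. The regularity $(\rho_{\bar\Theta}^l-\bar\rho,\mathbf{u}_{\bar\Theta}^l)\in C^0(0,\infty;H^3)$ is immediate since $\Psi\in C_0^\infty$ makes every Sobolev norm controlled by an integral over the compact set $\{|\xi-\xi_0|\leq 2\bar\zeta\}$ of a bounded integrand.

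Second, I apply Plancherel:
\[
\|\rho_{\bar\Theta}^l(t)\|_{L^2}^2=\int_{\mathbb{R}^3}e^{2t\,\mathrm{Re}\lambda_0(|\xi|)}|\widehat{\rho_{0,\bar\Theta}^l}(\xi)|^2\,d\xi.
\]
The upper bound in \eqref{4.10} is immediate from $\mathrm{Re}\,\lambda_0(|\xi|)\leq\Theta$ for all $\xi$, which is the defining property \eqref{4.4} of $\Theta$. For the lower bound, the crucial point is that $\widehat{\rho_{0,\bar\Theta}^l}$ is supported in $\{|\xi-\xi_0|\leq 2\bar\zeta\}$, and on this ball inequality \eqref{4.7} gives $\mathrm{Re}\,\lambda_0(|\xi|)\geq\Theta-\bar\Theta$; restricting the integral to this support and taking square roots yields the claimed lower bound. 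The corresponding estimates for $\mathbf{u}_{\bar\Theta}^l$ follow by the same computation upon noting that $|\widehat{\mathbf{u}_{\bar\Theta}^l}(\xi)|=|\widehat{v_{\bar\Theta}^l}(\xi)|$ because the symbol $i\xi/|\xi|$ has modulus one, and likewise $\|\mathbf{u}_{0,\bar\Theta}^l\|_{L^2}=\|v_{0,\bar\Theta}^l\|_{L^2}$.

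The only non-routine point is the algebraic eigenvector verification in the first step, i.e., confirming that the construction \eqref{4.8} is consistent with the second row of $A(\xi)w=\lambda_0 w$ via \eqref{2.24}; once this is settled the remainder is simply Plancherel applied to a function with compact Fourier support, and no delicate analytic estimate is involved.
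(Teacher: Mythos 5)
Your proof is correct and follows essentially the argument the paper leaves implicit in its construction (\ref{4.4})--(\ref{4.8}): the data (\ref{4.8}) encode exactly the eigenvector relation for $\lambda_0(|\xi|)$ (the second row reducing to the characteristic equation (\ref{2.24})), so the ansatz (\ref{4.9}) solves the linearized system (\ref{2.19}) with vanishing incompressible part, and the two-sided bounds (\ref{4.10}) follow from Plancherel combined with $\mathrm{Re}\,\lambda_0\leq\Theta$ from (\ref{4.4}) and the lower bound (\ref{4.7}) on $\mathrm{supp}\,\Psi$. The only blemish is the sign convention relating $v$ and $\mathbf{u}$ (with the paper's $v=\Lambda^{-1}\mathrm{div}\,\mathbf{u}$ one has $\mathbb{Q}\mathbf{u}=-\Lambda^{-1}\nabla v$), an ambiguity already present in (\ref{4.8})--(\ref{4.9}) themselves, which affects neither the eigenvector verification nor any $L^2$ norm.
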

\begin{remark}Since the linear system for the unknowns $(\rho,\mathbf{m})$ is almost the same as $(\rho,\mathbf{u})$, one can get the same estimates in (\ref{4.1})-(\ref{4.10}) for $(\rho,\mathbf{m})$, which together with the conservation of $(\rho,\mathbf{m})$ can help us deduce the following nonlinear instability in $H^3$-framework.
\end{remark}

\subsection{Nonlinear instability}

\quad\quad We make a priori assumption that
\begin{equation}\label{4.11}
\mathcal{E}(t)=\|(\rho,\mathbf{u})\|_{H^3(\mathbb{R}^3)}\leq\delta\ll1,\ {\rm for\ any}\ t\geq0.
\end{equation}

In \cite{chen4}, under the assumptions $P'(\bar\rho)>0$, $\gamma>0$ and (\ref{4.11}), they get the following energy estimate:
\begin{equation}\label{4.12}
\mathcal{E}(t)\leq C\mathcal{E}(0),\ {\rm for\ any}\ t\geq0.
\end{equation}
We would like to emphasize that (\ref{4.12}) also holds under the more general stability condition $P'(\bar\rho)+\gamma\bar\rho>0$ in Section 3. In other words, we can also get the global existence of the solution for the system (\ref{1.1}) with the suitable small initial data in $H^3$-space.

We go back to the instability here. Without the stability condition $P'(\bar\rho)+\gamma\bar\rho>0$, one cannot get the desired estimate as in (\ref{4.12}). However, one can obtain a weaker estimate, which is important to deduce the nonlinear instability. In particular, one has

\begin{proposition}\label{l 4.3} Suppose that the hypotheses in Theorem \ref{l 1.4} holds. For any given initial data $(\rho_0-\bar\rho,\mathbf{u}_0)\in H^3(\mathbb{R}^3)$, there exists a constant $T>0$ and a unique solution $(\rho-\bar\rho,\mathbf{u})\in C^0(0,T;H^3(\mathbb{R}^3))$ to the Cauchy problem (\ref{1.4})-(\ref{1.5}). Additionally, there exist a constant $\bar\delta_0\in[0,1]$ and a constant $C$ independent of $T$, such that if $\mathcal{E}(t)\leq\bar\delta_0$ on $[0,T]$, then
\begin{equation}\label{4.13}
\mathcal{E}^2(t)\leq \mathcal{E}^2(0)+C\int_0^t\|(\rho-\bar\rho,\mathbf{u})\|_{L^2}^2d\tau,
\end{equation}
where $\mathcal{E}(t)$ is defined in (\ref{4.11}).
\end{proposition}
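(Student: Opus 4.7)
The plan is to combine a classical local well-posedness argument with a high-order energy identity tailored to the sign-indefinite situation $P'(\bar\rho)+\gamma\bar\rho<0$. Local existence of a unique strong solution $(\tilde\rho,\mathbf{u})\in C^0([0,T];H^3(\mathbb{R}^3))$ will follow by the standard Banach contraction scheme applied to (\ref{1.4})-(\ref{1.5}): the density equation is transport, the velocity equation is parabolic, and the Yukawa forcing $\gamma\nabla(1-\Delta)^{-1}\tilde\rho$ is a smoothing lower-order perturbation bounded on every $H^s$, so $T$ depends only on $\|(\tilde\rho_0,\mathbf{u}_0)\|_{H^3}$ and no new mechanism beyond the classical Matsumura--Nishida theory is needed.

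For the a priori bound (\ref{4.13}), I will apply $\nabla^k$ (all $k$-th order spatial derivatives) to the system for $0\leq k\leq 3$, pair the mass equation with $\alpha_1\nabla^k\tilde\rho$ and the momentum equation with $\bar\rho\nabla^k\mathbf{u}$, and use $-\Delta\phi+\phi=\tilde\rho$ to recast the Yukawa cross term as a full time derivative exactly as in the derivation of (\ref{3.9}) in the proof of Lemma \ref{le3.1}. The pressure-velocity cross terms cancel after integration by parts, and summation in $k$ gives
\begin{equation*}
\tfrac12\tfrac{d}{dt}\mathcal{F}(t)+\mu\|\nabla\mathbf{u}\|_{H^3}^2+(\mu+\mu')\|\operatorname{div}\mathbf{u}\|_{H^3}^2=\mathcal{R}(t),
\end{equation*}
where
\begin{equation*}
\mathcal{F}(t)=\sum_{k=0}^{3}\int_{\mathbb{R}^3}\Bigl[\alpha_1|\nabla^k\tilde\rho|^2+\bar\rho|\nabla^k\mathbf{u}|^2+\tfrac{\gamma}{\bar\rho}\bigl(|\nabla^k\phi|^2+|\nabla^{k+1}\phi|^2\bigr)\Bigr]dx,
\end{equation*}
and $\mathcal{R}(t)$ collects all commutator contributions and the nonlinear terms from $N_1,N_2$. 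Moser-type product estimates together with $H^{3/2+}\hookrightarrow L^\infty$ yield $|\mathcal{R}(t)|\leq C\mathcal{E}(t)\bigl(\|\nabla\mathbf{u}\|_{H^3}^2+\mathcal{E}(t)^2\bigr)$, which is absorbed by the diffusion on the left once $\bar\delta_0$ is chosen small.

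The key step is the coercivity of $\mathcal{F}$ modulo a lower-order remainder. On the Fourier side, the density-Yukawa quadratic form at order $k$ reads
\begin{equation*}
\int_{\mathbb{R}^3}\frac{\alpha_1(1+|\xi|^2)+\gamma}{1+|\xi|^2}\,|\xi|^{2k}\,|\widehat{\tilde\rho}(\xi)|^2\,d\xi;
\end{equation*}
for $k\geq 1$ the weight $|\xi|^{2k}$ suppresses the low-frequency region in which the symbol $\alpha_1(1+|\xi|^2)+\gamma$ can be negative, so this piece is bounded below by $c\|\nabla^k\tilde\rho\|_{L^2}^2-C\|\tilde\rho\|_{L^2}^2$, while the $k=0$ contribution is $O(\|\tilde\rho\|_{L^2}^2)$. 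Combining with $\bar\rho|\nabla^k\mathbf{u}|^2\geq 0$ gives $\mathcal{F}(t)\geq c\mathcal{E}^2(t)-C\|(\tilde\rho,\mathbf{u})\|_{L^2}^2$ together with $\mathcal{F}(t)\leq C\mathcal{E}^2(t)$. Integrating the differential identity from $0$ to $t$ and applying these equivalences produces exactly (\ref{4.13}).

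The main obstacle is precisely this spectral coercivity: under $P'(\bar\rho)+\gamma\bar\rho<0$ the natural energy is sign-indefinite at low frequencies, so no pure Gronwall bound on $\mathcal{E}^2$ alone is available and one is forced to carry the $L^2$ remainder on the right-hand side. A secondary technical point is that the scheme above uses $P'(\bar\rho)>0$ to ensure positivity of the Fourier symbol at high frequencies; if this fails, I would switch to the conservative unknowns $(\tilde\rho,\mathbf{m})$ with $\mathbf{m}=(\tilde\rho+\bar\rho)\mathbf{u}$, as flagged in the text, since the $\rho\nabla^2\mathbf{u}$-type nonlinearity that would otherwise obstruct closing the $H^3$ estimate disappears in the conservative formulation.
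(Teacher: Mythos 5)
Your strategy coincides with the route the paper (very briefly) indicates: standard local existence, an $H^3$ energy identity in which the Yukawa cross term is rewritten as a time derivative of $\|\nabla^{k}\phi\|_{L^2}^2+\|\nabla^{k+1}\phi\|_{L^2}^2$ exactly as in (\ref{3.9}), and relegation of the sign-indefinite low-frequency part of the density quadratic form to the right-hand side as $\|(\tilde\rho,\mathbf{u})\|_{L^2}^2$ by frequency splitting; this is what the paper means by "the lowest order terms cannot be absorbed". The genuine gap is in your treatment of the remainder $\mathcal{R}$. The dissipation in your identity controls only $\|\nabla\mathbf{u}\|_{H^3}^2$, whereas $\mathcal{R}$ unavoidably contains cubic terms carrying three derivatives of the density with no velocity dissipation to match, e.g. $\int{\rm div}\,\mathbf{u}\,|\nabla^3\tilde\rho|^2dx$ from the transport part and the top-order leftover of $F(\tilde\rho)\nabla\tilde\rho$ after integration by parts; these are of size $C\mathcal{E}\|\nabla^3\tilde\rho\|_{L^2}^2\le C\bar\delta_0\mathcal{E}^2$, which the viscous terms cannot absorb and which is not bounded by $\|(\tilde\rho,\mathbf{u})\|_{L^2}^2$. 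Keeping such a term forces a Gronwall step in $\mathcal{E}^2$ and produces a factor $e^{C\bar\delta_0 t}$, i.e. a constant depending on $T$ — precisely what (\ref{4.13}) must exclude, since in the proof of Theorem \ref{l 1.4} the escape time $T^\delta=\Theta^{-1}\ln(2\epsilon_1/\delta)\to\infty$ as $\delta\to0$ and the bootstrap for $T^*$ then fails. The missing ingredient is the usual cross functional $\sum_{0\le k\le 2}\int\nabla^k\mathbf{u}\cdot\nabla\nabla^k\tilde\rho\,dx$, which generates density dissipation $\|\nabla\tilde\rho\|_{H^2}^2$ up to another bounded-frequency remainder controlled by $\|\tilde\rho\|_{L^2}^2$; only with this full dissipation can all cubic terms be absorbed and (\ref{4.13}) hold with $C$ independent of $T$.

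Second, your fallback for $P'(\bar\rho)\le0$ is not a fix. Passing to the conservative unknowns $(\rho,\mathbf{m})$ is used in the paper solely to exploit the divergence structure of the nonlinearity in the Duhamel estimate (\ref{4.22}); it leaves the linearized symbol $\alpha_1+\gamma(1+|\xi|^2)^{-1}$ unchanged and therefore cannot restore high-frequency coercivity of the energy. Indeed, if $P'(\bar\rho)<0$ then by (\ref{4.2}) the mode $\lambda_+$ grows at the uniform rate $-P'(\bar\rho)/\eta>0$ for all large frequencies, and for a wave packet supported near $|\xi|\sim K$ one has, already at the linearized level, $\mathcal{E}^2(t)-\mathcal{E}^2(0)\sim K^6\big(e^{2\lambda_+t}-1\big)\|(\tilde\rho_0,\mathbf{u}_0)\|_{L^2}^2$ while $\int_0^t\|(\tilde\rho,\mathbf{u})\|_{L^2}^2d\tau\sim(2\lambda_+)^{-1}\big(e^{2\lambda_+t}-1\big)\|(\tilde\rho_0,\mathbf{u}_0)\|_{L^2}^2$, so (\ref{4.13}) would force $C\gtrsim K^6$ and cannot hold with a universal constant. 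Thus your argument (like the paper's omitted one) genuinely requires the high-frequency positivity $P'(\bar\rho)>0$, with the instability driven by $\gamma<0$, and no change of unknowns removes that restriction; it should be stated as a hypothesis rather than patched by the conservative formulation.
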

\begin{proof} The local existence is standard, and one can see that in \cite{chen4,chikami1}. The process of deriving the estimate (\ref{4.13}) is almost the same as the case under the stability condition $P'(\bar\rho)+\gamma\bar\rho>0$. The unique difference is that some terms containing the lowest order dissipation cannot be absorbed by the dissipation term in the left hand side of the energy inequality without the condition $P'(\bar\rho)+\gamma\bar\rho>0$. We omit the details here for simplicity.
\end{proof}

Up to now, we can begin to prove the nonlinear instability in Theorem \ref{l 1.4}.

\textit{\underline{Proof of Theorem \ref{l 1.4}}}. Denote $(\rho_{0,\bar\Theta}^\delta,\mathbf{u}_{0,\bar\Theta}^\delta):=\delta(\rho_{0,\bar\Theta}^l,\mathbf{u}_{0,\bar\Theta}^l)$ and $C_0:=\|(\rho_{0,\bar\Theta}^\delta,\mathbf{u}_{0,\bar\Theta}^\delta)\|_{H^3}$. From Proposition \ref{l 4.3}, there exists a $\tilde{\delta}\in(0,1)$ such that for any $\delta<\tilde{\delta}$, there exists a unique local solution $(\rho^\delta,\mathbf{u}^\delta)$ to the Cauchy problem (\ref{1.4})-(\ref{1.5}) with the initial data $(\rho_{0,\bar\Theta}^\delta,\mathbf{u}_{0,\bar\Theta}^\delta)$. In addition, let $\bar\delta_0$ be the same as in Proposition \ref{l 4.3} and $\delta_0=\min\{\tilde\delta,\bar\delta_0,\frac{1}{4C_0},,\frac{\bar\rho}{4C_0}\}$.

For any $\delta\in(0,\delta_0)$, take
\begin{equation}\label{4.15}
T^\delta=\frac{1}{\Theta}\ln\frac{2\epsilon_1}{\delta},\ \ \bar\Theta=\min\{\frac{\Theta}{2},\frac{1}{T^\delta}\},
\end{equation}
with $\epsilon_1=\min\Big\{\frac{\delta_0}{4C_1(C_0^2+\frac{1}{2\Theta\delta_0^2})^{1/2}},
\frac{1}{C_2^6\delta_0^4}\big(\min\{1,\frac{\bar\rho}{2}\}-2C_0\delta_0\big)^6,\frac{32\delta_0^2}{(C_2e)^6}\Big\}$ and $C_1,C_2$ to be determined later.	

Then, set
\begin{equation}\label{4.16}
T^*=\sup\{t\in(0,T^{max})\big|\sup\limits_{0\leq\tau\leq t}\mathcal{E}((\rho^\delta,\mathbf{u}^\delta)(\tau))\leq \delta_0\},
\end{equation}
and
\begin{equation}\label{4.17}
T^{**}=\sup\{t\in(0,T^{max})\big|\sup\limits_{0\leq\tau\leq t}\|(\rho^\delta,\mathbf{u}^\delta)(\tau)\|_{L^2}\leq \delta\delta_0^{-1}e^{\Theta t}\},
\end{equation}
where $T^{max}$ is the maximal time of existence. It is easy to see that $T^*T^{**}>0$, and
\begin{equation}
\mathcal{E}((\rho^\delta,\mathbf{u}^\delta)(T^*))=\delta_0\ \ {\rm if}\ T^*<\infty,
\end{equation}
and
\begin{equation}
\|(\rho^\delta,\mathbf{u}^\delta)(T^{**})\|_{L^2}=\delta\delta_0^{-1}e^{\Theta t}\ \ {\rm if}\ T^{**}<T^{max}.
\end{equation}

We verify that
\begin{equation}\label{4.19}
T^\delta\leq \min\{T^*,T^{**}\}.
\end{equation}
Suppose by way of contradiction that $T^*\leq T^\delta<\infty$. Thus by Proposition \ref{l 4.3}, there exists a positive constant $C_1$ such that
\begin{equation}
\begin{split}
\mathcal{E}((\rho^\delta,\mathbf{u}^\delta)(T^*))^2\leq \mathcal{E}((\rho^\delta,\mathbf{u}^\delta)(0))^2+C\int_0^T\|(\rho^\delta,\mathbf{u}^\delta)(t)\|_{L^2}^2dt\\
\leq C_0^2\delta^2+C\int_0^{T^*}(\delta\delta_0^{-1}e^{\Theta t})^2dt
\leq (C_0^2+\frac{Ce^{2\Theta T^*}}{2\Theta\delta_0^2})\delta^2\\
\leq C_1^2(C_0^2+\frac{1}{2\Theta \delta_0^2})e^{2\Theta T^\delta}\delta^2
= C_1^2(C_0^2+\frac{1}{2\Theta \delta_0^2})(2\epsilon_1)^2,
\end{split}
\end{equation}
which yields that
\begin{equation}
\mathcal{E}((\rho^\delta,\mathbf{u}^\delta)(T^*))\leq C_1\sqrt{C_0^2+\frac{1}{2\Theta \delta_0^2}}\times2\epsilon_1\leq \frac{\delta_0}{2},
\end{equation}
and hence this contradicts the definition of $T^*$.

To prove $T^\delta\leq T^{**}$ in $H^3$-framework, we shall use the conservative unknowns $(\rho,\mathbf{m})$ although it is equivalent to the unknowns $(\rho,\mathbf{u})$ when the perturbation is small and there is not vacuum. Therefore, when using the representation of the solution  $(\rho,\mathbf{m})$ with the cut-off initial data in the middle frequency defined in (\ref{4.8}), one can put one derivative of nonlinear term (conservative structure) on Green's function. In particular, suppose that $T^{**}\leq T^\delta$ by way of contradiction, one has from (\ref{4.6}) and (\ref{4.10}) that
\begin{equation}\label{4.22}
\begin{split}
&\|(\rho^\delta,\mathbf{m}^\delta)(T^{**})\|_{L^2}=\|(\widehat{\rho^\delta},\widehat{\mathbf{m}^\delta})(T^{**})\|_{L^2}
=\|(\widehat{\rho^\delta},\widehat{\tilde{v}^\delta})(T^{**})\|_{L^2}\\
\leq &\delta\|e^{\lambda_0(|\xi|)T^{**}}(\rho_{0,\bar\Theta}^l,\mathbf{u}_{0,\bar\Theta}^l)\|_{L^2}
+C\int_0^{T^{**}}\|e^{(T^{**}-t)A(|\xi|)}\mathcal{F}[\tilde{N}(\rho^\delta,\mathbf{m}^\delta)]\|_{L^2}dt\\
\leq &C_0\delta e^{\Theta T^{**}}+C\int_0^{T^{**}}e^{\Theta(T^{**}-t)}\|(\rho^\delta,\mathbf{m}^\delta)\|_{L^2}\|\nabla(\rho^\delta,\mathbf{m}^\delta)\|_{L^\infty}dt\\
\leq &C_0\delta e^{\Theta T^{**}}+C\int_0^{T^{**}}e^{\Theta(T^{**}-t)}
\|(\rho^\delta,\mathbf{m}^\delta)\|_{L^2}^{\frac{7}{6}}\|\nabla^3(\rho^\delta,\mathbf{m}^\delta)\|_{L^2}^{\frac{5}{6}}dt\\
\leq & C_0\delta e^{\Theta T^{**}}+C\int_0^{T^{**}}e^{\Theta(T^{**}-t)}\delta^{\frac{7}{6}}\delta_0^{-\frac{7}{6}}e^{\frac{7}{6}\Theta t}\delta_0^{\frac{5}{6}}dt\\
\leq &C_0\delta e^{\Theta T^{**}}+6Ce^{\Theta T^{**}}\delta^{\frac{7}{6}}\delta_0^{-\frac{1}{3}}\frac{e^{\frac{\Theta T^{**}}{6}}}{\Theta}\leq\delta\delta_0^{-1}e^{\Theta T^{**}}\Big[C_0\delta_0+\frac{C_2}{2}\delta_0^{\frac{2}{3}}\epsilon_1^{\frac{1}{6}}\big]\\
:=&\min\{\frac{1}{2},\frac{\bar\rho}{4}\}\delta\delta_0^{-1}e^{\Theta T^{**}}\Big[(C_0\delta_0+\frac{C_2}{2}\delta_0^{\frac{2}{3}}\epsilon_1^{\frac{1}{6}})\cdot(\min\{\frac{1}{2},\frac{\bar\rho}{4}\})^{-1}\Big]\\
\leq& \min\{\frac{1}{2},\frac{\bar\rho}{4}\}\delta\delta_0^{-1}e^{\Theta T^{**}},
\end{split}
\end{equation}
where we can choose suitable small $\delta_0$ such that $(C_0\delta_0+\frac{C_2}{2}\delta_0^{\frac{2}{3}}\epsilon_1^{\frac{1}{6}})\cdot(\min\{\frac{1}{2},\frac{\bar\rho}{4}\})^{-1}\leq 1$.

Then, (\ref{4.22}) together with the relation $\|\mathbf{u}^\delta\|_{L^2}\leq \frac{2}{\bar\rho}\|\mathbf{m}^\delta\|_{L^2}$ implies that
\begin{equation}
\|(\rho^\delta,\mathbf{u}^\delta)(T^{**})\|_{L^2}\leq \frac{1}{2}\delta\delta_0^{-1}e^{\Theta T^{**}}.
\end{equation}
This contradicts the definition of $T^{**}$. Therefore, we have proved (\ref{4.19}).

Finally, we shall complete the proof of Theorem \ref{l 1.4} by proving (\ref{1.16}). Similar to (\ref{4.22}), one has
\begin{equation}\label{4.26}
\begin{split}
&\|\rho^\delta(T^\delta)\|_{L^2}\geq \|\delta\widehat{\rho_{\bar\Theta}^l}\|_{L^2}
-C\int_0^{T^\delta}\|e^{(T^\delta-t)A(|\xi|)}\mathcal{F}[\tilde{N}(\rho^\delta,\mathbf{m}^\delta)]\|_{L^2}dt\\
\geq &\delta e^{(\Theta-\tilde{\Theta})T^\delta}\|\rho_{0,\bar\Theta}^l\|_{L^2}-\frac{C_2}{4}e^{\frac{7\Theta}{6}T^\delta}\delta^{\frac{7}{6}}\delta_0^{-\frac{1}{3}}\\
\geq &\delta e^{\Theta T^\delta}\big(e^{-\tilde{\Theta}T^\delta}-\frac{C_2}{4}e^{\frac{\Theta}{6}T^\delta}\delta^{\frac{1}{6}}\delta_0^{-\frac{1}{3}}\big)\\
\geq &\delta e^{\Theta T^\delta}\big(e^{-1}-\frac{C_2}{4}(2\epsilon_1)^{\frac{1}{6}}\delta_0^{-\frac{1}{3}}\big)\geq 2\epsilon_1\times\frac{1}{2}e^{-1}=\frac{\epsilon_1}{e},
\end{split}
\end{equation}
where we set $\tilde{\Theta}\leq\frac{1}{T^\delta}$, and use the relation $\epsilon_1\leq \frac{32\delta_0^2}{(C_2e)^6}$.

Additionally,
\begin{equation}\label{4.27}
\|\widehat{v_{0,\bar\Theta}^l}\|_{L^2}=\Big\|-\frac{\lambda_0(|\xi|)\Psi(\xi)}{\bar\rho|\xi|\|\Psi\|_{L^2}}\Big\|_{L^2}
\geq\inf\limits_{|\xi-\xi_0|<2\bar\zeta}\Big|-\frac{\lambda_0(|\xi|)}{\bar\rho|\xi|}\Big|\geq\frac{\Theta}{\bar\rho|\xi_0|},
\end{equation}
thus, a same way as in (\ref{4.26}) gives that
\begin{equation}\label{4.29}
\|\mathbf{u}^\delta(T^\delta)\|_{L^2}\geq \frac{\Theta}{\bar\rho|\xi_0|}\frac{\epsilon_1}{e}>0.
\end{equation}
This completes the proof of Theorem \ref{l 1.4} by taking $\epsilon_0=\min\{\frac{\epsilon_1}{e},\frac{\Theta}{\bar\rho|\xi_0|}\frac{\epsilon_1}{e}\}$.
			
\section {Appendix}

\quad\quad We will state several important lemmas and propositions on the homogeneous Besov space $\dot{B}^{s}_{p,1}$.
First, let $\mathcal {S}(\mathbb{R}^{d})$ be the Schwartz class of rapidly decreasing function. Given $f\in \mathcal
{S}(\mathbb{R}^{d})$, its Fourier transform $\mathcal {F}f=\widehat{f}$ is defined by $$\widehat{f}(\xi)=\int_{\mathbb{R}^{d}}e^{-ix\cdot\xi}f(x)dx.$$
Let $(\chi, \varphi)$ be a couple of smooth functions valued in $[0,1]$ such that $\chi$ is supported in the ball $\{\xi\in\mathbb{R}^{d}:  \ |\xi|\leq\frac{4}{3}\}$, $\varphi$ is supported in the shell $\{\xi\in \mathbb{R}^{d}: \ \frac{3}{4}\leq|\xi|\leq\frac{8}{3}\}$,  $\varphi(\xi):=\chi(\xi/2)-\chi(\xi)$
and
$$\chi(\xi)+\sum_{j\geq0}\varphi(2^{-j}\xi)=1~ \mathrm{for}\ \forall \ \xi \in\mathbb{ R}^{d},
~~~\sum_{j\in \mathbb{Z}}\varphi(2^{-j}\xi)=1 ~\mathrm{for} \ \forall \ \xi \in \mathbb{R}^{d}\setminus\{0\}.$$
For $f\in \mathcal{S}'$, the homogeneous frequency localization operators $\dot{\Delta}_j$ and $\dot{S}_j$ are defined by
\begin{equation*}
	\dot{\Delta}_{j}f\triangleq\varphi(2^{-j}D)f=\mathcal{F}^{-1}(\varphi(2^{-j}\xi)\mathcal{F}f)\quad {\rm and}\quad \dot{S}_{j}f\triangleq\chi(2^{-j}D)f=\mathcal{F}^{-1}(\chi(2^{-j}\xi)\mathcal{F}f).
\end{equation*}
We denote the space $\mathcal{S}'_{h}(\mathbb{R}^d)$ by the dual space of $\mathcal{S}'(\mathbb{R}^d)=\{f\in\mathcal{S}(\mathbb{R}^d):\,D^\alpha \hat{f}(0)=0\}$, which can also be identified by the quotient space of $\mathcal{S}'(\mathbb{R}^d)/{\mathbb{P}}$ with the polynomial space ${\mathbb{P}}$. The formal equality $$ f=\sum_{j\in\mathbb{Z}}\dot{\Delta}_jf $$ holds true for $f\in\mathcal{S}'_{h}(\mathbb{R}^d)$ and is called the homogeneous Littlewood-Paley decomposition, and then we have the fact that
\begin{equation}
	\dot{S}_jf=\sum_{q\le j-1}\dot{\Delta}_qf. \nonumber
\end{equation}
One easily verifies that with our choice of $\varphi$,
\begin{equation*}
	\dot{\Delta}_j\dot{\Delta}_qf\equiv0\quad \textrm{if}\quad|j-q|\ge
	2\quad \textrm{and} \quad
	\dot{\Delta}_j(\dot{S}_{q-1}f\dot{\Delta}_q f)\equiv0\quad \hbox{if}
	\quad |j-q|\ge 5.
\end{equation*}

\begin{definition}(Homogeneous Besov space)\label{A.1}
	For $s\in \mathbb{R}$ and $1\le p,r\le \infty$, the homogeneous Besov space $\dot{B}^{s}_{p,1}$ is defined by
	\begin{equation}\label{a.1}
		\dot{B}^s_{p,r}\triangleq\left\{f\in \mathcal{S}_h':\|f\|_{\dot{B}^s_{p,r}}<+\infty\right\},
	\end{equation}
	where
	\begin{equation}\label{a.2}
		\|f\|_{\dot{B}^s_{p,r}}\triangleq\|2^{js}\|\dot{\Delta}_jf\|_{L^p}\|_{\mathit{l}^r(\mathbb{Z})}.
	\end{equation}
\end{definition}

\begin{definition}(Chemin-Lerner spaces)\label{A.2}
	Let $T>0$, $s\in\mathbb{R}$, $1<r,p,q\le\infty$. The space $\widetilde{L}^q_{T}(\dot{B}^s_{p,r})$ is defined by
	\begin{equation}\label{a.3}
		\widetilde{L}^q_{T}(\dot{B}^s_{p,r})\triangleq\left\{f\in L^q(0,T;\mathcal{S}'_h):\|f\|_{\widetilde{L}^q_{T}(\dot{B}^s_{p,r})}<+\infty\right\},
	\end{equation}
	where
	\begin{equation}\label{a.4}
		\|f\|_{\widetilde{L}^q_{T}(\dot{B}^s_{p,r})}\triangleq\|2^{js}\|\dot{\Delta}_jf\|_{L^q(0,T;L^p)}\|_{\mathit{l}^r(\mathbb{Z})}.
	\end{equation}
\end{definition}
\begin{remark}\label{A.3}
	It holds that
	\begin{equation*}
		\|f\|_{\widetilde{L}^q_{T}(\dot{B}^s_{p,r})}\le\|f\|_{L^q_{T}(\dot{B}^s_{p,r})}\quad {\rm if}\quad r\ge q;\quad\|f\|_{\widetilde{L}^q_{T}(\dot{B}^s_{p,r})}\ge\|f\|_{L^q_{T}(\dot{B}^s_{p,r})}\quad {\rm if}\quad r\le q.
	\end{equation*}
\end{remark}
Restricting the above norms to the low or high frequencies parts of distributions will be crucial in our approach. For example, let us fix some integer $j_0$ and set
\begin{equation*}
	\|f\|^l_{\dot{B}^{s}_{p,1}}\triangleq\sum_{j\le j_0}2^{js}\|\dot{\Delta}_jf\|_{L^p},\quad \|f\|^h_{\dot{B}^{s}_{p,1}}\triangleq\sum_{j\ge j_0-1}2^{js}\|\dot{\Delta}_jf\|_{L^p};
\end{equation*}
\begin{equation*}
	\|f\|^l_{\widetilde{L}^\infty_{T}(\dot{B}^s_{p,1})}\triangleq\sum_{j\le j_0}2^{js}\|\dot{\Delta}_jf\|_{L^\infty_T(L^p)},\quad \|f\|^h_{\widetilde{L}^\infty_{T}(\dot{B}^s_{p,1})}\triangleq\sum_{j\ge j_0-1}2^{js}\|\dot{\Delta}_jf\|_{L^\infty_T(L^p)}.
\end{equation*}
\begin{lemma}(Bernstein inequalities)\label{A.4}
	Let $\mathscr{B}$ be a ball and $\mathscr{C}$ be a ring of $\mathbb{R}^d$. For $\lambda>0$, integer $k\ge0$, $1\le p\le q\le \infty$ and a smooth homogeneous function $\sigma$ in $\mathbb{R}^d\backslash\{0\}$ of degree $m$, then there holds
	\begin{equation*}
		\|\nabla^kf\|_{L^q}\le C^{k+1}\lambda^{k+d(\frac{1}{p}-\frac{1}{q})}\|f\|_{L^p},\quad {\rm whenever\ supp}\widehat{f}\subset\lambda\mathscr{B},
	\end{equation*}
	\begin{equation*}
		C^{-k-1}\lambda^k\|f\|_{L^q}\le\|\nabla^kf\|_{L^p}\le C^{k+1}\lambda^k\|f\|_{L^p},\quad {\rm whenever\ supp}\widehat{f}\subset\lambda\mathscr{C},
	\end{equation*}
	\begin{equation*}
		\|\sigma(\nabla)f\|_{L^q}\le C_{\sigma,m}\lambda^{m+d(\frac{1}{p}-\frac{1}{q})}\|f\|_{L^p},\quad {\rm whenever\ supp}\widehat{f}\subset\lambda\mathscr{C}.
	\end{equation*}
\end{lemma}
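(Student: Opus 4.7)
The three Bernstein inequalities are all instances of a single scaling device: the compact spectral support of $f$ turns the action of a Fourier multiplier into convolution with a fixed Schwartz function rescaled by $\lambda$, after which Young's convolution inequality supplies the desired $L^p\to L^q$ bound with the correct power of $\lambda$. My plan is to prove this rescaling identity once and derive all three estimates from it.

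For the first inequality (ball case), I would fix $\phi\in C_c^\infty(\mathbb{R}^d)$ with $\phi\equiv 1$ on a neighbourhood of $\mathscr{B}$ so that $\widehat{f}(\xi)=\phi(\xi/\lambda)\widehat{f}(\xi)$. For any multi-index with $|\alpha|=k$, this produces
\[ \partial^\alpha f=\mathcal{F}^{-1}\bigl[(i\xi)^\alpha\phi(\xi/\lambda)\bigr]*f=\lambda^{d+k}\bigl(h_\alpha(\lambda\cdot)\bigr)*f, \]
where $h_\alpha:=\mathcal{F}^{-1}[(i\eta)^\alpha\phi(\eta)]$ is a fixed Schwartz function independent of $\lambda$. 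Applying Young's inequality with $1+1/q=1/r+1/p$ and computing $\|\lambda^{d+k}h_\alpha(\lambda\cdot)\|_{L^r}=\lambda^{k+d(1/p-1/q)}\|h_\alpha\|_{L^r}$ will deliver the first bound, and summing over $|\alpha|=k$ will produce a constant of the claimed form $C^{k+1}$ since the multinomial count is at most $d^k$ while $\|h_\alpha\|_{L^r}$ grows at most geometrically in $k$.

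For the second inequality (ring case) the upper bound is immediate from the first, since a ring sits inside a ball. The lower bound is the one subtle step. I would pick $\tilde\phi\in C_c^\infty(\mathbb{R}^d\setminus\{0\})$ equal to $1$ on $\mathscr{C}$ and invoke the multinomial identity $|\xi|^{2k}=\sum_{|\alpha|=k}\frac{k!}{\alpha!}\xi^{2\alpha}$ to write
\[ \widehat{f}(\xi)=\tilde\phi(\xi/\lambda)\widehat{f}(\xi)=(-i)^k\sum_{|\alpha|=k}\tfrac{k!}{\alpha!}\,\lambda^{-k}\psi_\alpha(\xi/\lambda)\,\widehat{\partial^\alpha f}(\xi), \]
where $\psi_\alpha(\eta):=\tilde\phi(\eta)\eta^\alpha/|\eta|^{2k}$ lies in $C_c^\infty(\mathbb{R}^d\setminus\{0\})$ precisely because $\tilde\phi$ is supported away from the origin, so $1/|\eta|^{2k}$ is smooth there. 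Inverting the Fourier transform and applying Young's inequality with $r=1$ will then give
\[ \|f\|_{L^p}\leq C\lambda^{-k}\sup_{|\alpha|=k}\|\partial^\alpha f\|_{L^p}, \]
which is the required lower bound.

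For the third inequality I would treat the smooth homogeneous symbol $\sigma$ of degree $m$ by the same recipe: the homogeneity $\sigma(\xi)=\lambda^m\sigma(\xi/\lambda)$ combined with $\widehat{f}=\tilde\phi(\cdot/\lambda)\widehat{f}$ yields $\widehat{\sigma(\nabla)f}(\xi)=\lambda^m(\sigma\tilde\phi)(\xi/\lambda)\widehat{f}(\xi)$, and since $\sigma\tilde\phi\in C_c^\infty(\mathbb{R}^d\setminus\{0\})$ its inverse transform $g$ is Schwartz. Consequently $\sigma(\nabla)f=[\lambda^{m+d}g(\lambda\cdot)]*f$, and Young's inequality produces $\|\sigma(\nabla)f\|_{L^q}\lesssim\lambda^{m+d(1/p-1/q)}\|f\|_{L^p}$. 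The main obstacle in the entire argument is the ring lower bound, because one must exhibit an explicit polynomial inversion of $\nabla^k$; the multinomial identity provides exactly this, and everything else in the proof is mechanical rescaling and convolution bookkeeping.
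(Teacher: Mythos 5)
Your argument is correct and is essentially the classical proof of Bernstein's inequalities (spectral cutoff turned into convolution with a rescaled Schwartz kernel plus Young's inequality, with the multinomial identity $|\xi|^{2k}=\sum_{|\alpha|=k}\frac{k!}{\alpha!}\xi^{2\alpha}$ handling the ring lower bound), which is exactly the argument in Bahouri--Chemin--Danchin from which this appendix lemma is quoted without proof. The only discrepancy is a typo in the lemma as stated: in the second display the left-hand norm $\|f\|_{L^q}$ should read $\|f\|_{L^p}$, which is precisely the inequality your proof establishes.
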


\begin{proposition}\cite{Bahouri-2011}(Embedding for Besov space on $\mathbb{R}^n$)\label{A.5}
	\begin{itemize}
		\item For any $p\in[1,\infty]$, we have the continuous embedding $\dot{B}^{0}_{p,1}\hookrightarrow L^p\hookrightarrow\dot{B}^{0}_{p,\infty}$.
		\item If $s\in\mathbb{R}$, $1\le p_1\le p_2\le \infty$, and $1\le r_1\le r_2\le \infty$ then  $\dot{B}^{s}_{p_1,r_1}\hookrightarrow\dot{B}^{s-d(\frac{1}{p_1}-\frac{1}{p_2})}_{p_2,r_2}$.
		\item The space $\dot{B}^{\frac{d}{p}}_{p,1}$ is continuously embedded in the set of bounded continuous function (going to zero at infinity if, additionally, $p<\infty$).
	\end{itemize}
\end{proposition}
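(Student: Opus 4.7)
The plan is to handle the three embeddings in order, each a direct consequence of the Littlewood-Paley decomposition together with Bernstein's inequality (Lemma \ref{A.4}).

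For the first item, $\dot B^{0}_{p,1}\hookrightarrow L^p$, I would invoke the homogeneous Littlewood-Paley decomposition $f=\sum_{j\in\mathbb{Z}}\dot\Delta_j f$ valid in $\mathcal{S}'_h$, then apply the triangle inequality in $L^p$ to get
\[
\|f\|_{L^p}\leq\sum_{j\in\mathbb{Z}}\|\dot\Delta_j f\|_{L^p}=\|f\|_{\dot B^{0}_{p,1}}.
\]
For the companion embedding $L^p\hookrightarrow\dot B^{0}_{p,\infty}$, I would write $\dot\Delta_j f=\mathcal{F}^{-1}\varphi(2^{-j}\cdot)\ast f$ and apply Young's convolution inequality, noting that $\|\mathcal{F}^{-1}\varphi(2^{-j}\cdot)\|_{L^1}$ is bounded by a constant independent of $j$ (by a change of variables), so $\sup_j\|\dot\Delta_j f\|_{L^p}\lesssim\|f\|_{L^p}$.

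For the second item, the Besov embedding, the main tool is the first Bernstein inequality from Lemma \ref{A.4}, which yields
\[
\|\dot\Delta_j f\|_{L^{p_2}}\lesssim 2^{jd(\frac{1}{p_1}-\frac{1}{p_2})}\|\dot\Delta_j f\|_{L^{p_1}}
\]
since $\mathrm{supp}\,\widehat{\dot\Delta_j f}$ lies in a dyadic shell of size $2^j$. Multiplying by $2^{j(s-d(\frac{1}{p_1}-\frac{1}{p_2}))}$ and taking the $\ell^{r_2}$ norm in $j$, I would conclude via the elementary embedding $\ell^{r_1}(\mathbb{Z})\hookrightarrow\ell^{r_2}(\mathbb{Z})$ for $r_1\leq r_2$.

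For the third item, chaining the previous two, the second part with $(p_1,p_2,r_1,r_2)=(p,\infty,1,1)$ and $s=\tfrac{d}{p}$ gives $\dot B^{d/p}_{p,1}\hookrightarrow\dot B^{0}_{\infty,1}$, and then the first part supplies $\dot B^{0}_{\infty,1}\hookrightarrow L^\infty$. To upgrade $L^\infty$ to continuous bounded functions (vanishing at infinity when $p<\infty$), I would use that each $\dot\Delta_j f$ is the convolution of an $L^1$ (and smooth) kernel with $f$, hence continuous and, for $p<\infty$, vanishing at infinity; the series $\sum_j \dot\Delta_j f$ converges \emph{uniformly} in $L^\infty$ by the $\dot B^{0}_{\infty,1}$ bound already obtained, and a uniform limit of continuous functions (respectively, $C_0$ functions) preserves these properties.

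The main subtlety, and the only place real care is needed, is the third item: one must be precise about the fact that the low-frequency part $\sum_{j\leq 0}\dot\Delta_j f$ is well defined in $\mathcal{S}'_h$ (equivalently, modulo polynomials) and that the uniform convergence of the Littlewood-Paley series actually takes place in $C_0$ rather than only in $L^\infty$; this uses that each $\dot\Delta_j f$ lies in $C_0$ when $p<\infty$ (via Young's inequality with the Schwartz kernel $\mathcal{F}^{-1}\varphi(2^{-j}\cdot)$) and that $C_0$ is closed in $L^\infty$. Everything else is a straightforward application of Bernstein and the inclusions among $\ell^r$ spaces.
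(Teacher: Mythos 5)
Your proposal is correct and follows essentially the same route as the source: the paper does not prove Proposition \ref{A.5} itself but cites it from \cite{Bahouri-2011}, where the three embeddings are obtained exactly as you describe — triangle inequality on the Littlewood--Paley decomposition and Young's inequality for the first item, Bernstein's inequality from Lemma \ref{A.4} plus $\ell^{r_1}\hookrightarrow\ell^{r_2}$ for the second, and chaining these with uniform convergence of the dyadic series for the $\dot{B}^{\frac{d}{p}}_{p,1}$ statement. The only cosmetic refinement is that in the third item one should realize each block as the convolution of a fattened annulus cutoff (a Schwartz kernel) with $\dot{\Delta}_j f\in L^p$, rather than with the distribution $f$ itself, to conclude that each block lies in $C_0$ when $p<\infty$; this is the standard fix and does not change your argument.
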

\begin{proposition}\cite{danchin2,xu2023}\label{A.6}
	If supp$\mathcal{F}f\subset\left\{\xi\in\mathbb{R}^d:R_1\lambda\le|\xi|\le R_2\lambda\right\}$, then there exists $C$ depending only on $d$, $R_1$, $R_2$ so that for all $1<p<\infty$,
	\begin{equation}\label{a.5}
		C\lambda^2(\frac{p-1}{p})\int_{\mathbb{R}^d}|f|^pdx\le(p-1)\int_{\mathbb{R}^d}|\nabla f|^2|f|^{p-2}dx=-\int_{\mathbb{R}^d}\Delta f|f|^{p-2}fdx.
	\end{equation}
\end{proposition}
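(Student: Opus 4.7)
The equality on the right of the stated inequality is a direct integration by parts: since $f$ is band-limited in $\{R_1\lambda\le|\xi|\le R_2\lambda\}$, it is $C^\infty$ with all derivatives in every $L^q$, so the chain-rule identity $\nabla(|f|^{p-2}f)=(p-1)|f|^{p-2}\nabla f$ (justified by first regularizing with $(|f|^2+\varepsilon)^{(p-2)/2}f$ and letting $\varepsilon\downarrow 0$), combined with one integration by parts, gives $-\int\Delta f\cdot|f|^{p-2}f\,dx=(p-1)\int|\nabla f|^2|f|^{p-2}\,dx$. All the work lies in the lower bound.

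For that inequality I plan to use the heat semigroup. Set $u(t,\cdot):=e^{t\Delta}f$, whose Fourier support remains in the original annulus for every $t\ge 0$, and $F(t):=\|u(t)\|_{L^p}^p$. Repeating the integration by parts with $u(t)$ in place of $f$ gives $F'(t)=-p(p-1)\int|\nabla u(t)|^2|u(t)|^{p-2}\,dx\le 0$, so the claim translates into a lower bound on $-F'(0)$ in terms of $\lambda^2 F(0)$ with a constant independent of $p$. The crucial input is the spectral-localization decay
\[
\|e^{t\Delta}f\|_{L^p}\le C_0\,e^{-c_0 t\lambda^2}\|f\|_{L^p},
\]
which I would prove by writing $e^{t\Delta}f=K_{t,\lambda}\ast f$ with $K_{t,\lambda}(x)=\lambda^{d}K_{t\lambda^{2}}(\lambda x)$ and $K_\tau:=\mathcal F^{-1}(e^{-\tau|\cdot|^2}\tilde\varphi)$, where $\tilde\varphi\equiv 1$ on the annulus and is compactly supported away from the origin, then bounding $\|K_\tau\|_{L^1}\le C_0 e^{-c_0\tau}$ by differentiating under the Fourier integral and using the uniform pointwise bound $e^{-\tau|\xi|^2}\le e^{-\tau R_1^2}$ on $\operatorname{supp}\tilde\varphi$.

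Combining the displayed decay with the trivial contractivity $\|e^{t\Delta}f\|_{L^p}\le\|f\|_{L^p}$ (Young's inequality with $\|G_t\|_{L^1}=1$) gives $F(t)\le\min(1,C_0^p e^{-c_0 pt\lambda^2})F(0)$. Choosing $t_\ast:=\log(2C_0)/(c_0\lambda^2)$ produces $F(t_\ast)\le F(0)/2$, hence the fundamental theorem of calculus yields $\int_0^{t_\ast}(-F'(s))\,ds\ge F(0)/2$. To convert this integrated lower bound into the pointwise bound at $s=0$, one invokes the monotonicity of the Dirichlet-type functional $\mathcal D_p(v):=\int|\nabla v|^2|v|^{p-2}\,dx$ along the heat flow, i.e.\ $\mathcal D_p(u(s))\le\mathcal D_p(f)$ for all $s\ge 0$. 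This is immediate for $p=2$ (from $\tfrac{d}{dt}\|\nabla u\|_{L^2}^2=-2\|\Delta u\|_{L^2}^2\le 0$), and for $p\in(1,\infty)$ it follows from a Bochner/log-Sobolev-type identity. With monotonicity secured, $p(p-1)\mathcal D_p(f)\cdot t_\ast\ge F(0)/2$, hence $-F'(0)\gtrsim\lambda^2 F(0)$ with a $p$-independent constant, which, after absorbing the harmless factor $(p-1)/p\le 1$, gives the claimed inequality.

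\textbf{Main obstacle.} The principal difficulty is the monotonicity step for $p\neq 2$: the Bochner-type computation of $\tfrac{d}{dt}\mathcal D_p(u(t))$ produces cross terms of indefinite sign whose control requires a careful pointwise Li--Yau-type estimate, and this is precisely what forces the constant in the final bound to depend on $p$ only through the explicit factor $(p-1)/p$ appearing in the statement. The remaining ingredients --- the Schwartz-kernel estimate on $K_\tau$, the Young-inequality contraction, and the mean-value step --- are routine.
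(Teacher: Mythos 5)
Your integration-by-parts identity and your kernel estimate $\|e^{t\Delta}f\|_{L^p}\le C_0e^{-c_0\lambda^2t}\|f\|_{L^p}$ for annulus-supported $f$ are fine, and the arithmetic turning $F(t_\ast)\le F(0)/2$ into a lower bound on the \emph{time average} of the dissipation is correct. The proof, however, hinges entirely on the step you yourself flag: the monotonicity $\mathcal D_p(u(s))\le\mathcal D_p(f)$ of the weighted Dirichlet functional $\mathcal D_p(v)=\int|\nabla v|^2|v|^{p-2}dx$ along the heat flow for all $p\in(1,\infty)$. This is asserted, not proved, and it is not a standard fact; the tools you invoke (Bochner identities, Li--Yau gradient estimates) are positivity-based, whereas here $\widehat f$ is supported away from the origin, so $f$ has zero mean, $u(t)=e^{t\Delta}f$ necessarily changes sign, and the weight $|u|^{p-2}$ is singular ($1<p<2$) or degenerate ($p>2$) on the nodal set -- exactly the regime where the cross term $c_p\int\Delta v\,|\nabla v|^2/v$ (with $v=|u|^{p/2}$) has no sign and cannot be absorbed. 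Without this monotonicity your argument only yields $\frac{1}{t_\ast}\int_0^{t_\ast}\mathcal D_p(u(s))\,ds\ge c\,\lambda^2\|f\|_{L^p}^p/(p(p-1))$, a time-averaged bound, which does not imply the bound at $s=0$; but the bound at $s=0$ \emph{is} the proposition. So as written the proposal has a genuine gap at its central step. (Two minor points besides: even granting monotonicity, your chain gives $\mathcal D_p(f)\ge c\lambda^2\|f\|_{L^p}^p/(p(p-1))$, which loses an extra factor $p-1$ relative to the stated $p$-uniform constant -- harmless here since the paper only uses $2\le p\le 4$, but not "absorbed by $(p-1)/p\le1$"; and the displayed equality in \eqref{a.5} is exact only for scalar real $f$ -- for vector fields such as $\dot\Delta_j\mathbb P\mathbf u$ one only gets two-sided comparability, since $|\nabla|f||\le|\nabla f|$ is not an equality.)

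Note that the paper does not prove this proposition; it quotes it from \cite{danchin2,xu2023}, and the arguments available in that literature bypass the flow monotonicity altogether. One standard route (for $p\ge2$, which covers every application in this paper): pick $\psi\in C_c^\infty(\mathbb R^d\setminus\{0\})$ with $\psi\equiv1$ on $\{R_1\le|\xi|\le R_2\}$, write $\psi(\xi)=\sum_k\xi_k\psi_k(\xi)$ with $\psi_k(\xi)=\xi_k\psi(\xi)/|\xi|^2$ smooth and compactly supported away from $0$, so that $f=\psi(\lambda^{-1}D)f=\lambda^{-1}\sum_k\psi_k(\lambda^{-1}D)\partial_kf$; then in $\|f\|_{L^p}^p=\lambda^{-1}\sum_k\int\psi_k(\lambda^{-1}D)\partial_kf\,|f|^{p-2}f\,dx$ integrate the $\partial_k$ by parts onto $|f|^{p-2}f$, apply Cauchy--Schwarz with weight $|f|^{p-2}$ and H\"older with exponents $(p/2,p/(p-2))$, and use the $\lambda$-uniform $L^p$-boundedness of $\psi_k(\lambda^{-1}D)$ to get $\|f\|_{L^p}^p\le C(p-1)\lambda^{-1}\|f\|_{L^p}^{p/2}\big(\int|\nabla f|^2|f|^{p-2}dx\big)^{1/2}$, which rearranges to the claimed lower bound. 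If you want to keep your semigroup framework instead, you must either prove the $\mathcal D_p$-monotonicity for sign-changing band-limited data (I know of no such result) or replace it by a mechanism that transfers the averaged bound to $s=0$; at present that transfer is missing.
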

\begin{proposition}\cite{Bahouri-2011}(Interpolation inequality)\label{interpolation}
	Let $1\le p,r,r_1,r_2\le\infty$, if $f\in\dot{B}^{s_1}_{p,r_1}\cap\dot{B}^{s_2}_{p,r_2}$ and $s_1\neq s_2$, then $f\in\dot{B}^{\theta s_1+(1-\theta)s_2}_{p,r}$ for all $\theta\in(0,1)$ and
	\begin{equation}\label{a.6}
		\|f\|_{\dot{B}^{\theta s_1+(1-\theta)s_2}_{p,r}}\le\|f\|^\theta_{\dot{B}^{s_1}_{p,r_1}}\|f\|^{1-\theta}_{\dot{B}^{s_2}_{p,r_2}}
	\end{equation}
	with $\frac{1}{r}=\frac{\theta}{r_1}+\frac{1-\theta}{r_2}$.
\end{proposition}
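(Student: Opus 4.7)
The proposition is the classical real-interpolation inequality for homogeneous Besov norms, so the plan is to reduce it to a discrete H\"older inequality on $\mathbb{Z}$ applied to the dyadic blocks. First, at each frequency level $j$ I would write the weight as an exact geometric mean
\begin{equation*}
2^{j[\theta s_1+(1-\theta)s_2]}\|\dot{\Delta}_j f\|_{L^p} = \bigl(2^{js_1}\|\dot{\Delta}_j f\|_{L^p}\bigr)^\theta\bigl(2^{js_2}\|\dot{\Delta}_j f\|_{L^p}\bigr)^{1-\theta},
\end{equation*}
and set $a_j \triangleq 2^{js_1}\|\dot{\Delta}_j f\|_{L^p}$ and $b_j \triangleq 2^{js_2}\|\dot{\Delta}_j f\|_{L^p}$. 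The task then reduces to proving the sequence-space inequality
\begin{equation*}
\|a_j^\theta b_j^{1-\theta}\|_{\ell^r(\mathbb{Z})}\le\|a_j\|_{\ell^{r_1}(\mathbb{Z})}^\theta\,\|b_j\|_{\ell^{r_2}(\mathbb{Z})}^{1-\theta}.
\end{equation*}

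Second, I would invoke H\"older's inequality with conjugate exponents $r_1/(\theta r)$ and $r_2/((1-\theta)r)$, which are genuinely conjugate precisely because the hypothesis $\frac{1}{r}=\frac{\theta}{r_1}+\frac{1-\theta}{r_2}$ forces $\frac{\theta r}{r_1}+\frac{(1-\theta)r}{r_2}=1$. Applied to the sum $\sum_{j\in\mathbb{Z}} a_j^{\theta r} b_j^{(1-\theta)r}$ this yields
\begin{equation*}
\Bigl(\sum_{j\in\mathbb{Z}} a_j^{\theta r}b_j^{(1-\theta)r}\Bigr)^{1/r}\le\Bigl(\sum_{j\in\mathbb{Z}} a_j^{r_1}\Bigr)^{\theta/r_1}\Bigl(\sum_{j\in\mathbb{Z}} b_j^{r_2}\Bigr)^{(1-\theta)/r_2},
\end{equation*}
which, after taking the $r^{\text{th}}$ root and unwinding the definitions of $a_j$ and $b_j$, is exactly inequality \eqref{a.6}.

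Third, the boundary cases where $r_1=\infty$ or $r_2=\infty$ are handled by replacing the corresponding $\ell^{r_i}$-sum by a supremum. If, say, $r_2=\infty$, I would simply bound $b_j \leq \|f\|_{\dot{B}^{s_2}_{p,\infty}}$ uniformly in $j$, pull the factor $\|f\|_{\dot{B}^{s_2}_{p,\infty}}^{1-\theta}$ out of the remaining sum, and apply the $r$-power identity $r=r_1/\theta$ to finish. The membership $f\in\dot{B}^{s_1}_{p,r_1}\cap\dot{B}^{s_2}_{p,r_2}\subset\mathcal{S}'_h$ guarantees that the dyadic decomposition converges in $\mathcal{S}'_h$, so that the left-hand side is well defined as an element of the homogeneous Besov scale.

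I do not expect any genuine obstacle: once the geometric-mean factorization is written down, the entire argument is a one-line application of H\"older on $\mathbb{Z}$, and the only bookkeeping required is to check that the exponents agree. I note in passing that the hypothesis $s_1\neq s_2$ is not actually used in the inequality itself; it is imposed only so that the two endpoint spaces are distinct and the interpolated scale is nontrivial.
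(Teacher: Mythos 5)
Your proof is correct: the geometric-mean factorization $2^{j[\theta s_1+(1-\theta)s_2]}\|\dot{\Delta}_j f\|_{L^p}=a_j^\theta b_j^{1-\theta}$ followed by H\"older on $\ell^r(\mathbb{Z})$ with exponents $r_1/(\theta r)$ and $r_2/((1-\theta)r)$, plus the obvious supremum modification at the endpoints, is exactly the standard argument behind this statement, which the paper does not prove but simply cites from \cite{Bahouri-2011}. Your side remark is also accurate: the hypothesis $s_1\neq s_2$ plays no role in this constant-one version of the inequality (it only matters for the stronger variant with $\ell^1$ summation on the left and a constant depending on $|s_1-s_2|$).
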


\begin{proposition}\cite{Bahouri-2011,danchin5}\label{A.7}
	Let $s>0$, $1\leq p$, $r\leq \infty$, then $\dot{B}^{s}_{p,r}\cap L^\infty$ is an algerbra and
	\begin{equation}\label{a.7}
		\begin{array}{rl}
			\|fg\|_{\dot{B}^{s}_{p,r}} \lesssim \|f\|_{L^\infty}\|g\|_{\dot{B}^{s}_{p,r}} + \|g\|_{L^\infty}\|f\|_{\dot{B}^{s}_{p,r}}.
		\end{array}
	\end{equation}
	
	Let $s_1+s_2>0$, $s_1 \le \frac{d}{p_1}$, $s_2 \le \frac{d}{p_2}$, $s_1\ge s_2$, $\frac{1}{p_1}+\frac{1}{p_2}\le 1$. Then it holds that
	\begin{equation}\label{a.8}
		\|fg\|_{\dot{B}^{s_2}_{q,1}} \lesssim \|f\|_{\dot{B}^{s_1}_{p_1,1}}\|g\|_{\dot{B}^{s_2}_{p_2,1}},
	\end{equation}
	where $\frac{1}{q} = \frac{1}{p_1} + \frac{1}{p_2} - \frac{s_1}{d}$.
\end{proposition}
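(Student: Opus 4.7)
The plan is to rely on Bony's paraproduct decomposition
$$fg = T_fg + T_gf + R(f,g),$$
where $T_fg := \sum_{q\in\mathbb{Z}}\dot{S}_{q-1}f\,\dot{\Delta}_qg$ and $R(f,g):=\sum_{q\in\mathbb{Z}}\sum_{|q'-q|\le 1}\dot{\Delta}_qf\,\dot{\Delta}_{q'}g$, and to estimate each piece using the spectral localization properties recorded at the start of the appendix together with Bernstein's Lemma~\ref{A.4} and H\"older's inequality. Because the spectrum of $\dot{S}_{q-1}f\,\dot{\Delta}_qg$ lies in a dyadic shell of radius $\sim 2^q$, only indices with $|q-j|\le 4$ contribute to $\dot{\Delta}_jT_fg$; in the remainder each summand has spectrum in a dyadic ball, so only indices $q\ge j-N_0$ contribute to $\dot{\Delta}_jR(f,g)$, for some fixed $N_0$.

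For the algebra estimate \eqref{a.7} I would first bound
$$\|\dot{\Delta}_jT_fg\|_{L^p}\lesssim\sum_{|q-j|\le 4}\|\dot{S}_{q-1}f\|_{L^\infty}\|\dot{\Delta}_qg\|_{L^p}\le\|f\|_{L^\infty}\!\!\sum_{|q-j|\le 4}\|\dot{\Delta}_qg\|_{L^p},$$
and multiplying by $2^{js}$ and taking the $\ell^r(\mathbb{Z})$-norm gives $\|T_fg\|_{\dot{B}^s_{p,r}}\lesssim\|f\|_{L^\infty}\|g\|_{\dot{B}^s_{p,r}}$; the symmetric term $T_gf$ is treated identically. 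For the remainder I would use H\"older's inequality with the $L^\infty$-norm on the $g$-block to obtain
$$2^{js}\|\dot{\Delta}_jR(f,g)\|_{L^p}\lesssim\|g\|_{L^\infty}\sum_{q\ge j-N_0}2^{(j-q)s}\cdot 2^{qs}\|\dot{\Delta}_qf\|_{L^p}.$$
Viewing the right-hand side as a discrete convolution with kernel $(2^{-ks}\mathbf{1}_{k\ge -N_0})_k$, which lies in $\ell^1(\mathbb{Z})$ precisely when $s>0$, Young's inequality yields $\|R(f,g)\|_{\dot{B}^s_{p,r}}\lesssim\|g\|_{L^\infty}\|f\|_{\dot{B}^s_{p,r}}$, completing \eqref{a.7}.

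For the refined product estimate \eqref{a.8} I would keep the same three-piece decomposition but replace $L^\infty$ by embeddings of the form $\dot{B}^{s_i}_{p_i,1}\hookrightarrow L^{p_i^\ast}$ furnished by Proposition~\ref{A.5}, with $p_i^\ast$ chosen so that H\"older's inequality produces exactly the target integrability $q$ determined by $1/q = 1/p_1+1/p_2-s_1/d$; the hypotheses $s_i\le d/p_i$ are exactly what make these embeddings legal, and $s_1\ge s_2$ preserves the output regularity $s_2$ in the paraproducts $T_fg$ and $T_gf$. The condition $s_1+s_2>0$ is used only in the remainder: after extracting $2^{-qs_1}$ from $\|\dot{\Delta}_qf\|_{L^{p_1}}$ and $2^{-qs_2}$ from $\|\dot{\Delta}_qg\|_{L^{p_2}}$, the surviving prefactor is $2^{js_2}2^{-q(s_1+s_2)} = 2^{-s_1q}\cdot 2^{-s_2(q-j)}$, and the sum over $q\ge j-N_0$ is an $\ell^1$-convolution exactly when $s_1+s_2>0$.

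The main technical obstacle will be keeping the exponent bookkeeping consistent at the endpoint cases $s_i = d/p_i$, where the sharp embedding $\dot{B}^{d/p_i}_{p_i,1}\hookrightarrow L^\infty$ (bullet three of Proposition~\ref{A.5}) must be used, and verifying that the identity $1/q = 1/p_1+1/p_2-s_1/d$ is respected simultaneously in all three pieces of the decomposition; this is done by combining Bernstein with the scale-shift embedding $\dot{B}^{s_1}_{p_1,1}\hookrightarrow\dot{B}^{0}_{q_1,1}\hookrightarrow L^{q_1}$ at $1/q_1 = 1/p_1-s_1/d$. Since all of this is standard Bony calculus, one could alternatively simply invoke \cite{Bahouri-2011,danchin5} directly.
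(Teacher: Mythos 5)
The paper offers no proof of Proposition \ref{A.7} at all: it is imported verbatim from \cite{Bahouri-2011,danchin5}, so your Bony-decomposition argument is not a different route but a reconstruction of the standard proof in those references. Your treatment of the algebra estimate \eqref{a.7} is complete and correct: the paraproducts are handled by $\|\dot S_{q-1}f\|_{L^\infty}\le C\|f\|_{L^\infty}$ and almost-orthogonality, and the remainder by the $\ell^1$-kernel $2^{-ks}\mathbf{1}_{k\ge -N_0}$ together with Young's inequality, which is exactly where $s>0$ enters.

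For \eqref{a.8}, however, the displayed bookkeeping for the remainder does not close as written. Each summand $\dot\Delta_qf\,\widetilde{\dot\Delta}_qg$ is estimated by H\"older in $L^{r_0}$ with $\frac{1}{r_0}=\frac{1}{p_1}+\frac{1}{p_2}$ (legitimate since $\frac{1}{p_1}+\frac{1}{p_2}\le1$), but one must still apply $\dot\Delta_j$ to a function whose spectrum lies in a ball of radius $\sim2^q$ and use Bernstein to pass from $L^{r_0}$ to the target $L^q$; since $d(\frac{1}{r_0}-\frac{1}{q})=s_1$, this produces a factor $2^{js_1}$ that is missing from your computation. Your stated prefactor $2^{js_2}2^{-q(s_1+s_2)}=2^{-s_1q}\,2^{-s_2(q-j)}$ is not a convolution kernel in $q-j$: when $s_2\le0$ (which is allowed, since only $s_1+s_2>0$ and $s_1\ge s_2$ are assumed) the factor $2^{-s_2(q-j)}$ grows in $q-j$, and the compensating factor $2^{-s_1q}$ depends on $q$ itself and blows up as $j\to-\infty$, so the claimed $\ell^1$-convolution argument fails at this point. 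Restoring the Bernstein factor turns the prefactor into $2^{-(q-j)(s_1+s_2)}$, and the sum then closes precisely under $s_1+s_2>0$, as you assert. A similar explicit step is needed for $T_gf$: H\"older with $\|\dot S_{q-1}g\|_{L^{p_2^*}}$, $\frac{1}{p_2^*}=\frac{1}{p_2}-\frac{s_2}{d}$, lands in $L^{\tilde q}$ with $\frac{1}{\tilde q}=\frac{1}{p_1}+\frac{1}{p_2}-\frac{s_2}{d}\neq\frac{1}{q}$ unless $s_1=s_2$, and one must pay a Bernstein factor $2^{j(s_1-s_2)}$ (requiring $s_1\ge s_2$) which is absorbed because the output is measured at regularity $s_2$; saying that $s_1\ge s_2$ ``preserves the output regularity'' glosses over exactly this step. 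These are standard repairs, and of course simply invoking \cite{Bahouri-2011,danchin5}, as the paper does, is also sufficient.
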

\begin{proposition}\cite{xin3}\label{A.8}
	Let the real numbers $s_1,\ s_2,\ p_1$ and $p_2$ be such that
	$$s_1+s_2\geq0,\ s_1\leq\frac{d}{p_1},\ s_2<\min\left(\frac{d}{p_1},\frac{d}{p_2}\right)\ and\ \frac{1}{p_1}+\frac{1}{p_2}\leq1.$$
	Then it holds that
	\begin{equation}\label{a.9}
		\|fg\|_{\dot B_{p_2,\infty}^{s_1+s_2-\frac{d}{p_1}}}\lesssim\|f\|_{\dot B_{p_1,1}^{s_1}}\|g\|_{\dot B_{p_2,\infty}^{s_2}}.
	\end{equation}
\end{proposition}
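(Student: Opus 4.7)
The plan is to prove this Besov product inequality via Bony's paradifferential decomposition
$$fg = T_f g + T_g f + R(f,g),$$
where $T_f g = \sum_{j} \dot{S}_{j-1} f\, \dot{\Delta}_j g$ and $R(f,g) = \sum_{j} \dot{\Delta}_j f\, \widetilde{\dot{\Delta}}_j g$ with $\widetilde{\dot{\Delta}}_j = \dot{\Delta}_{j-1}+\dot{\Delta}_j+\dot{\Delta}_{j+1}$. Each of the three pieces is then estimated separately in the target space $\dot{B}_{p_2,\infty}^{s_1+s_2-d/p_1}$ and combined.

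For the paraproduct $T_f g$, I would use that $s_1 \le d/p_1$ together with Bernstein's inequality to derive $\|\dot{S}_{j-1} f\|_{L^\infty} \lesssim 2^{j(d/p_1 - s_1)} \|f\|_{\dot{B}_{p_1,1}^{s_1}}$, where the $\ell^1$-index on the right is crucial when $s_1 = d/p_1$ (which is exactly why the hypothesis uses $\dot{B}_{p_1,1}^{s_1}$ rather than $\dot{B}_{p_1,\infty}^{s_1}$). The spectral localization $\dot{\Delta}_j(\dot{S}_{j'-1}f\, \dot{\Delta}_{j'} g) \equiv 0$ for $|j-j'|\ge 5$, combined with the Hölder bound $L^\infty \cdot L^{p_2} \hookrightarrow L^{p_2}$, then yields, after multiplying by $2^{j(s_1+s_2-d/p_1)}$ and taking $\sup_j$, the estimate $\|T_f g\|_{\dot{B}_{p_2,\infty}^{s_1+s_2-d/p_1}} \lesssim \|f\|_{\dot{B}_{p_1,1}^{s_1}} \|g\|_{\dot{B}_{p_2,\infty}^{s_2}}$.

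For $T_g f$, the strict hypothesis $s_2 < d/p_2$ is used symmetrically so that the geometric sum in $\|\dot{S}_{j-1} g\|_{L^\infty} \lesssim \sum_{k\le j-2} 2^{k(d/p_2-s_2)} \cdot 2^{ks_2}\|\dot{\Delta}_k g\|_{L^{p_2}}$ converges from the mere $\ell^\infty$-summability of $g$, giving $\|\dot{S}_{j-1} g\|_{L^\infty} \lesssim 2^{j(d/p_2-s_2)}\|g\|_{\dot{B}_{p_2,\infty}^{s_2}}$. The factor $\|\dot{\Delta}_j f\|_{L^{p_2}}$ is then rebalanced by Bernstein's inequality together with the constraint $s_2 < d/p_1$, which is precisely what makes the bookkeeping of frequency weights close. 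For the remainder $R(f,g)$, Hölder's inequality with $1/q = 1/p_1+1/p_2 \le 1$ provides a pointwise-in-$j$ bound in $L^q$; applying $\dot{\Delta}_j$ and transferring back into $L^{p_2}$ via Bernstein costs a factor $2^{jd/p_1}$, and the hypothesis $s_1+s_2 \ge 0$ is exactly what guarantees that the diagonal sum (weighted by $2^{j(s_1+s_2-d/p_1)}$) remains uniformly bounded, with the $\ell^1$-summability of $f$'s Besov coefficients supplying the required contraction.

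The main obstacle is the remainder $R(f,g)$ in the critical case $s_1+s_2 = 0$: the dyadic sum defining $\dot{\Delta}_j R(f,g)$ converges only because the $\ell^1$-index of $f$'s Besov norm compensates for the merely $\ell^\infty$-summable coefficients of $g$, and extracting the target $\sup_j$ norm requires a careful two-scale dyadic reorganization where the inner summation over the near-diagonal pairs $(j',j'')$ with $|j'-j''|\le 1$ is controlled by pulling out $\sup_{j'} 2^{j's_2}\|\dot{\Delta}_{j'}g\|_{L^{p_2}}$ and summing $2^{j's_1}\|\dot{\Delta}_{j'}f\|_{L^{p_1}}$ in $\ell^1_{j'}$. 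The strict inequalities $s_2 < d/p_1$ and $s_2 < d/p_2$ cannot be relaxed, as they are what provide the geometric tail decay required to pass from $\ell^\infty$-summable Besov coefficients to uniform $L^\infty$-bounds on the partial sums $\dot{S}_{j-1} g$.
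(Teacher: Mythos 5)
The paper itself offers no proof of this proposition (it is quoted from \cite{xin3}), so the relevant question is whether your Bony-decomposition argument stands on its own. Your treatment of $T_f g$ and of the remainder $R(f,g)$ is correct: $s_1\le \frac{d}{p_1}$ together with the $\ell^1$ index on $f$ gives $\|\dot S_{j-1}f\|_{L^\infty}\lesssim 2^{j(d/p_1-s_1)}\|f\|_{\dot B^{s_1}_{p_1,1}}$ (including the endpoint $s_1=\frac{d}{p_1}$), and for $R(f,g)$ H\"older with $\frac1q=\frac1{p_1}+\frac1{p_2}\le1$ followed by Bernstein from $L^q$ to $L^{p_2}$ (cost $2^{jd/p_1}$, legitimate since $q\le p_2$), combined with $s_1+s_2\ge0$ and the $\ell^1$ summability of $2^{ks_1}\|\dot\Delta_k f\|_{L^{p_1}}$, closes the over-diagonal sum.

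The gap is in $T_g f$. You pair $\|\dot S_{j-1}g\|_{L^\infty}$ with $\|\dot\Delta_j f\|_{L^{p_2}}$ and then convert the latter to $\|\dot\Delta_j f\|_{L^{p_1}}$ "by Bernstein". Bernstein only raises the Lebesgue exponent of a spectrally localized function, so this step is valid only when $p_1\le p_2$; the hypotheses permit $p_2<p_1$, and that is precisely the regime in which this proposition is used in the paper (Corollary \ref{A.9} takes $p_1=p\in[2,4]$, $p_2=2$). When $p_2<p_1$ one must instead use H\"older in the form $\|\dot S_{k-1}g\,\dot\Delta_k f\|_{L^{p_2}}\le\|\dot S_{k-1}g\|_{L^m}\|\dot\Delta_k f\|_{L^{p_1}}$ with $\frac1m=\frac1{p_2}-\frac1{p_1}$, and then bound
\begin{equation*}
\|\dot S_{k-1}g\|_{L^m}\lesssim\sum_{l\le k-2}2^{l(d/p_1-s_2)}\,2^{ls_2}\|\dot\Delta_l g\|_{L^{p_2}}\lesssim 2^{k(d/p_1-s_2)}\|g\|_{\dot B^{s_2}_{p_2,\infty}},
\end{equation*}
which is where the strict inequality $s_2<\frac{d}{p_1}$ genuinely enters (in the case $p_1\le p_2$ it is not needed at all: the weights $2^{-js_2}$ and $2^{jd/p_1}$ cancel exactly against the target index). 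Your sentence invoking "$s_2<d/p_1$ rebalanced via Bernstein" inside the $L^\infty$ route therefore does not correspond to a valid step, and as written your proof covers only $p_1\le p_2$. With the case split $p_1\le p_2$ (your $L^\infty$ argument, using $s_2<\frac{d}{p_2}$) versus $p_2<p_1$ (the $L^m$ pairing above, using $s_2<\frac{d}{p_1}$), the argument becomes complete and matches the standard proof of this product law.
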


\begin{corollary}\label{A.9}
	Let the real numbers $1-\frac{d}{2}<\sigma_1\leq\sigma_0$ and $p$ satisfy (\ref{1.2.6}). The following two inequalities hold true:
	\begin{equation}\label{a.10}
		\|fg\|_{\dot B_{2,\infty}^{-\sigma_1}}\lesssim\|f\|_{\dot B_{p,1}^{\frac{d}{p}}}\|g\|_{\dot B_{2,\infty}^{-\sigma_1}},
	\end{equation}
	as well as
	\begin{equation}\label{a.11}
		\|fg\|_{\dot B_{2,\infty}^{\frac{d}{p}-\frac{d}{2}-\sigma_1}}\lesssim\|f\|_{\dot B_{p,1}^{\frac{d}{p}-1}}\|g\|_{\dot B_{2,\infty}^{\frac{d}{p}-\frac{d}{2}-\sigma_1+1}}.
	\end{equation}
\end{corollary}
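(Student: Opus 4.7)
Both inequalities are instances of the paraproduct product law in Proposition \ref{A.8}, and the plan is simply to choose the quadruple $(p_1,p_2,s_1,s_2)$ in each case so that the target Besov index on the left matches $s_1+s_2-d/p_1$, and then to verify the four structural conditions
\[
s_1+s_2\ge 0,\qquad s_1\le \tfrac{d}{p_1},\qquad s_2<\min\bigl(\tfrac{d}{p_1},\tfrac{d}{p_2}\bigr),\qquad \tfrac{1}{p_1}+\tfrac{1}{p_2}\le 1,
\]
using only the hypotheses $1-\tfrac{d}{2}<\sigma_1\le \sigma_0=\tfrac{2d}{p}-\tfrac{d}{2}$ together with \eqref{1.2.6}.

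For \eqref{a.10}, I will take $p_1=p$, $p_2=2$, $s_1=\tfrac{d}{p}$, $s_2=-\sigma_1$, so that $s_1+s_2-\tfrac{d}{p_1}=-\sigma_1$ is exactly the target. The integrability condition reduces to $p\ge 2$, which is the lower bound in \eqref{1.2.6}; the regularity bound $s_1\le \tfrac{d}{p_1}$ holds with equality; $s_1+s_2=\tfrac{d}{p}-\sigma_1\ge 0$ follows from $\sigma_1\le \sigma_0\le \tfrac{d}{p}$, where the second inequality is itself equivalent to $p\ge 2$; and the strict bound $s_2<\min(\tfrac{d}{p},\tfrac{d}{2})=\tfrac{d}{p}$ rewrites as $\sigma_1>-\tfrac{d}{p}$, which I will obtain by chaining $\sigma_1>1-\tfrac{d}{2}$ with the upper bound $p\le 2d/(d-2)$ from \eqref{1.2.6}, since the latter is equivalent to $1-\tfrac{d}{2}\ge -\tfrac{d}{p}$.

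For \eqref{a.11}, I will take $p_1=p$, $p_2=2$, $s_1=\tfrac{d}{p}-1$, $s_2=\tfrac{d}{p}-\tfrac{d}{2}-\sigma_1+1$, yielding the target index $s_1+s_2-\tfrac{d}{p_1}=\tfrac{d}{p}-\tfrac{d}{2}-\sigma_1$. Now $s_1+s_2=\sigma_0-\sigma_1\ge 0$ by the assumption $\sigma_1\le \sigma_0$; $s_1=\tfrac{d}{p}-1\le \tfrac{d}{p}$ is trivial; the strict bound $s_2<\tfrac{d}{p}$ is precisely $\sigma_1>1-\tfrac{d}{2}$; and the integrability condition is again $p\ge 2$. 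Each of the four conditions is thus immediate from the hypotheses.

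The only mild bookkeeping point, and the only place where all of \eqref{1.2.6} is used at once, is in securing the strict bound $s_2<\tfrac{d}{p}$ in \eqref{a.10}: the upper bound $p\le 2d/(d-2)$ is exactly what ensures $-\sigma_1<\tfrac{d}{2}-1\le \tfrac{d}{p}$ with the required strictness inherited from $\sigma_1>1-\tfrac{d}{2}$. Beyond this, the corollary is a direct specialization of Proposition \ref{A.8} with no further analysis needed.
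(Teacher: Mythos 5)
Your proposal is correct and is exactly the intended derivation: the paper states Corollary \ref{A.9} as an immediate specialization of Proposition \ref{A.8}, and your parameter choices $(p_1,p_2,s_1,s_2)=(p,2,\tfrac{d}{p},-\sigma_1)$ and $(p,2,\tfrac{d}{p}-1,\tfrac{d}{p}-\tfrac{d}{2}-\sigma_1+1)$, together with the verifications that $p\ge 2$ gives $\sigma_0\le\tfrac{d}{p}$ and $\tfrac1p+\tfrac12\le1$, that $p\le 2d/(d-2)$ gives $-\sigma_1<\tfrac{d}{p}$, and that $\sigma_1>1-\tfrac{d}{2}$ gives the strict bound on $s_2$ in \eqref{a.11}, are precisely the checks required. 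No gaps.
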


\begin{proposition}\cite{xu2023}\label{A.10}
	Let $j_0\in\mathbb{Z}$, and denote $z^l\triangleq\dot S_{j_0}z,\ z^h\triangleq z-z^l$ and, for any $s\in\mathbb{R}$,
	$$\|z\|_{\dot B_{2,\infty}^{s}}^l\triangleq \sup_{j\leq j_0}2^{js}\|\dot\Delta_jz\|_{L^2}.$$
	There exists a universal integer $N_0$ such that for any $2\leq p\leq 4$ and $s>0$, we have
	\begin{equation}\label{a.12}
		\|fg^h\|_{\dot B_{2,\infty}^{-\sigma_0}}^l\leq C(\|f\|_{\dot B_{p,1}^{s}}+\|\dot S_{k_0+N_0}f\|_{L^{p^\ast}})\|g^h\|_{\dot B_{p,\infty}^{-s}},
	\end{equation}
	\begin{equation}\label{a.13}
		\|f^h g\|_{\dot B_{2,\infty}^{-\sigma_0}}^l\leq C(\|f^h\|_{\dot B_{p,1}^{s}}+\|\dot S_{k_0+N_0}f^h\|_{L^{p^\ast}})\|g\|_{\dot B_{p,\infty}^{-s}},
	\end{equation}
where $\sigma_0=\frac{2d}{p}-\frac{d}{2},\frac{1}{p*}+\frac{1}{p}=\frac{1}{2}$.
	
	Additionally, for exponents $s>0$, $1 \le p_1,p_2,q \le \infty$ and satisfy
	\begin{equation*}
		\frac{d}{p_1}+\frac{d}{p_2}-d\le s\le \min (\frac{d}{p_1},\frac{d}{p_2})\ \ and\ \ \frac{1}{q} = \frac{1}{p_1}+\frac{1}{p_2}-\frac{s}{d}.
	\end{equation*}
	Then it holds that
	\begin{equation}\label{a.14}
		\|fg\|_{\dot{B}^{-s}_{q,\infty}} \lesssim \|f\|_{\dot{B}^{s}_{p_1,1}}\|g\|_{\dot{B}^{-s}_{p_2, \infty}}.
	\end{equation}
\end{proposition}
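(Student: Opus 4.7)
My plan is to prove the three product estimates in the proposition, all of which follow from Bony's paraproduct decomposition $fg = T_f g + R(f,g) + T_g f$ combined with spectral localization and Bernstein's inequality to exploit the low-frequency norm on the left-hand side of (a.12) and (a.13).

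For (a.12), I would decompose $fg^h = T_f g^h + R(f, g^h) + T_{g^h} f$ and note that to bound the $\dot B_{2,\infty}^{-\sigma_0}$-norm restricted to low frequencies, one only needs to estimate dyadic blocks $\dot\Delta_j$ with $j \leq j_0$. For the paraproduct $T_f g^h$, since $g^h$ is spectrally supported away from the origin, only indices $k \geq k_0 - 1$ contribute to the sum. The central device is then to split $\dot S_{k-1} f = \dot S_{k_0+N_0} f + (\dot S_{k-1} f - \dot S_{k_0+N_0} f)$ for a suitable universal $N_0$: the first piece contributes the term $\|\dot S_{k_0+N_0} f\|_{L^{p^\ast}}$ through Hölder's inequality with $\tfrac{1}{p^\ast}+\tfrac{1}{p}=\tfrac{1}{2}$, while the second piece is a telescoping sum controlled by $\|f\|_{\dot B_{p,1}^s}$ once one multiplies by the weight $2^{-j\sigma_0}$ and takes the supremum over $j \leq j_0$. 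For the remainder $R(f, g^h)$ and the symmetric paraproduct $T_{g^h} f$, I would run parallel arguments, using Hölder pairing and Bernstein's inequality to recover $L^2$ integrability from the natural $L^{2p/(p+2)}$ exponent of the product. Estimate (a.13) follows by an entirely symmetric argument, swapping the roles of $f^h$ and $g$ in the decomposition.

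For (a.14), Bony's decomposition reduces the estimate to bounding $T_f g$, $R(f,g)$, and $T_g f$ in $\dot B^{-s}_{q,\infty}$. For $T_f g$, the block $\dot\Delta_j(T_f g)$ is spectrally localized near frequency $2^j$, and one estimates a typical contribution by $\|\dot S_{k-1} f\|_{L^r}\|\dot\Delta_k g\|_{L^{p_2}}$ with $\tfrac{1}{r} = \tfrac{1}{q} - \tfrac{1}{p_2}$; the hypothesis $s \leq \tfrac{d}{p_1}$ together with Bernstein gives $\|\dot S_{k-1} f\|_{L^r} \lesssim 2^{k(\frac{d}{p_1}-\frac{d}{r})}\|f\|_{\dot B^s_{p_1, 1}}$, and the negative-index norm of $g$ (with intrinsic weight $2^{ks}$) renders the sum summable thanks to $s \leq \tfrac{d}{p_2}$. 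The symmetric paraproduct $T_g f$ is handled analogously, while the remainder $R(f,g)$ is precisely where the lower bound $s \geq \tfrac{d}{p_1}+\tfrac{d}{p_2}-d$ enters, via Bernstein's inequality used to upgrade the natural product-space exponent $p_1 p_2 / (p_1 + p_2)$ up to $q$.

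The principal technical obstacle I anticipate is the clean extraction of the $\|\dot S_{k_0+N_0} f\|_{L^{p^\ast}}$ contribution in (a.12) and (a.13). Unlike $\|f\|_{\dot B^s_{p,1}}$, which aggregates a summable dyadic sequence, this is a single fixed low-frequency remainder that must be peeled off the paraproduct sum without incurring logarithmic losses; verifying that the splitting of $\dot S_{k-1} f$ at the cut-off $k_0 + N_0$ produces the correct telescoping behavior uniformly across the admissible range $2 \leq p \leq 4$, and that the universal constant $N_0$ can be chosen independently of $k_0$, will be the delicate point.
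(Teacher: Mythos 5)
You should first note that the paper contains no proof of Proposition \ref{A.10} at all: it is quoted with the citation \cite{xu2023} (and ultimately goes back to the appendix of \cite{danchin5}), so there is no in-paper argument to compare against. Your plan — Bony decomposition, H\"older with $\frac{1}{p^*}+\frac{1}{p}=\frac12$ for $T_fg^h$, Bernstein to return to $L^2$ for $R(f,g^h)$ and $T_{g^h}f$, and the standard three-term analysis for \eqref{a.14} — is exactly the route of the cited source, and as a strategy it is sound.

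Three concrete corrections, though. (i) In \eqref{a.14}, for $T_fg$ the right bound is $\|\dot S_{k-1}f\|_{L^r}\le\sum_{k'\le k-2}2^{k'd(\frac{1}{p_1}-\frac{1}{r})}\|\dot\Delta_{k'}f\|_{L^{p_1}}=\sum_{k'\le k-2}2^{k's}\|\dot\Delta_{k'}f\|_{L^{p_1}}\le\|f\|_{\dot B^{s}_{p_1,1}}$, uniformly in $k$; the prefactor $2^{k(\frac{d}{p_1}-\frac{d}{r})}=2^{ks}$ you wrote must not be there — with it, $2^{-js}\|\dot\Delta_j(T_fg)\|_{L^q}$ picks up an uncontrolled factor $\approx 2^{js}$ and the supremum over $j$ does not close. (ii) In \eqref{a.12}–\eqref{a.13}, the product of two $L^p$ blocks lies in $L^{p/2}$, not $L^{2p/(p+2)}$ (the latter is the exponent of an $L^2$–$L^p$ pairing, and you have no $L^2$ control of either factor); the Bernstein step $L^{p/2}\to L^2$, legitimate precisely because $p\le 4$, costs $2^{jd(\frac{2}{p}-\frac12)}=2^{j\sigma_0}$, which is exactly what the weight $2^{-j\sigma_0}$ absorbs — this bookkeeping is the whole reason the index $\sigma_0=\frac{2d}{p}-\frac d2$ appears, so it must be run with $p/2$. (iii) The "delicate point" you anticipate about peeling off $\dot S_{k_0+N_0}f$ is in fact immediate: on the low-frequency output $j\le j_0$ the paraproduct $T_fg^h$ only involves indices $k$ within a bounded distance of $j_0$ (one needs $k\le j_0+4$ from $|k-j|\le 4$, and $k\gtrsim j_0$ since $g^h$ is high-frequency), so for a universal $N_0$ (say $N_0=5$) one has $\dot S_{k-1}f=\dot S_{k-1}\dot S_{j_0+N_0}f$, whence $\|\dot S_{k-1}f\|_{L^{p^*}}\lesssim\|\dot S_{j_0+N_0}f\|_{L^{p^*}}$ by uniform $L^{p^*}$-boundedness of $\dot S_{k-1}$; no telescoping and no logarithmic loss arise, and the $\dot B^{s}_{p,1}$ part of the right-hand side is only needed for $R(f,g^h)$ and $T_{g^h}f$. (The statement's $k_0$ is the cut-off $j_0$; any dependence of constants on this fixed cut-off is harmless in the paper's applications.)
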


\begin{proposition}\cite{xu2023}\label{A.11} The Bony decomposition satisfies that
	\begin{equation}\label{a.15}
		\|T_ab\|_{\dot{B}^{s-1+ \frac{d}{2} - \frac{d}{p}}_{2,1}} \lesssim \|a\|_{_{\dot{B}^{\frac{d}{p}-1}_{p,1}}}\|b\|_{\dot{B}^{s}_{p,1}},\ \ if\ d \ge 2\ and\ 1 \le p \le \min(4,\frac{2d}{d-2}),
	\end{equation}
	\begin{equation}\label{a.16}
		\|R(a,b)\|_{\dot{B}^{s-1+ \frac{d}{2} - \frac{d}{p}}_{2,1}} \lesssim \|a\|_{_{\dot{B}^{\frac{d}{p}-1}_{p,1}}}\|b\|_{\dot{B}^{s}_{p,1}},\ \ if\ s>1-\min(\frac{d}{p}+\frac{d}{p'})\ and\ 1\le p\le 4.
	\end{equation}
\end{proposition}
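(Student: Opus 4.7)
The plan is to estimate each dyadic block of $T_a b$ and $R(a,b)$ in $L^2$ by combining Hölder's inequality on the product with Bernstein's inequality (Lemma \ref{A.4}) to move between Lebesgue exponents, and then to resum in $\ell^1$ using a discrete convolution/Young-type argument. The overall structure follows the classical proof of continuity of Bony's paraproduct and remainder, but the output space is $L^2$-based while the inputs are $L^p$-based, which forces several numerical trade-offs tied to the three restrictions on $p$, $s$, and $d$.

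For \eqref{a.15}, I would write $T_a b=\sum_{j'}\dot{S}_{j'-1}a\,\dot{\Delta}_{j'}b$ and fix $j$; spectral localization forces $|j-j'|\le 4$ in the decomposition of $\dot{\Delta}_j T_a b$. Hölder with $\frac{1}{q}+\frac{1}{p}=\frac{1}{2}$ and Bernstein give
\begin{equation*}
\|\dot{S}_{j'-1}a\|_{L^q}\lesssim\sum_{k\le j'-1}2^{k(2d/p-d/2)}\|\dot{\Delta}_{k}a\|_{L^p},
\end{equation*}
which is meaningful for $p\ge 2$, while the upper bound $p\le 4$ keeps $q<\infty$. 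Factoring the natural weights $\alpha_k:=2^{k(d/p-1)}\|\dot{\Delta}_k a\|_{L^p}$ and $\beta_{j'}:=2^{j's}\|\dot{\Delta}_{j'}b\|_{L^p}$, the residual coefficient on the $k$-sum becomes $2^{-(j'-k)\mu}$ with $\mu:=\frac{d}{p}-\frac{d}{2}+1$. The restriction $p\le 2d/(d-2)$ is precisely what makes $\mu\ge 0$: when $\mu>0$, $\{2^{-m\mu}\}_{m\ge 1}\in\ell^1$ and Young's convolution inequality on $\mathbb{Z}$ closes the estimate; the borderline $\mu=0$ is handled directly since $\sum_{k\le j'-1}\alpha_k\le\|\alpha\|_{\ell^1}$. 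The regularity index $s-1+\frac{d}{2}-\frac{d}{p}$ drops out by compensating the gain $2^{k(2d/p-d/2)}$ of Bernstein against the target weight. The case $1\le p<2$ is analogous using $L^\infty\times L^p\to L^p$ and the embedding $\dot{B}^{s-1}_{p,1}\hookrightarrow\dot{B}^{s-1+d/2-d/p}_{2,1}$.

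For \eqref{a.16}, both factors in $R(a,b)=\sum_{|j'-j''|\le 1}\dot{\Delta}_{j'}a\,\dot{\Delta}_{j''}b$ are localized at scale $2^{j'}$, so $\dot{\Delta}_j$ of each summand vanishes unless $j\le j'+C$. Hölder gives $\|\dot{\Delta}_{j'}a\,\dot{\Delta}_{j'}b\|_{L^{p/2}}\le\|\dot{\Delta}_{j'}a\|_{L^p}\|\dot{\Delta}_{j'}b\|_{L^p}$, and the upper bound $p\le 4$ (so $p/2\le 2$) lets Bernstein transfer from $L^{p/2}$ to $L^2$ at cost $2^{jd(2/p-1/2)}$. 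Summing over $j\le j'+C$ against the weight $2^{j(s-1+d/2-d/p)}$ converges provided $s-1+\frac{d}{p}>0$, which is what the positivity condition in \eqref{a.16} delivers; the leftover coefficient is $2^{j'(s-1+d/p)}\|\dot{\Delta}_{j'}a\|_{L^p}\|\dot{\Delta}_{j'}b\|_{L^p}$, and the final $\ell^1$ bound follows from $\sum_{j'}\gamma_{j'}\delta_{j'}\le\|\gamma\|_{\ell^\infty}\|\delta\|_{\ell^1}\le\|\gamma\|_{\ell^1}\|\delta\|_{\ell^1}$.

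The main obstacle is not any single calculation but the consistent bookkeeping of three interlocking numerical thresholds: the Hölder splitting (forcing $p\ge 2$ for the paraproduct and $p\le 4$ for the remainder), the Bernstein exchange (the exponent $d/p-d/2+1$ whose sign is controlled by $p\le 2d/(d-2)$), and the summability of the dyadic geometric series (tied to $s>1-d/p$ for the remainder). Matching all three while making the output regularity $s-1+\frac{d}{2}-\frac{d}{p}$ emerge cleanly is the whole point of the proof; once these thresholds are respected, both estimates reduce to routine paraproduct bookkeeping.
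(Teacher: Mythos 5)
The paper never proves Proposition \ref{A.11}; it is quoted from \cite{xu2023} (these estimates go back to the appendix of Danchin--Xu \cite{danchin5}), so the only benchmark is the standard paraproduct/remainder argument, which is precisely what you reconstruct, and your dyadic bookkeeping is correct where it matters. For \eqref{a.15} with $2\le p$: H\"older with $\tfrac1q=\tfrac12-\tfrac1p$, Bernstein giving the gain $2^{k(2d/p-d/2)}$, the residual factor $2^{-(j'-k)\mu}$ with $\mu=1+\tfrac dp-\tfrac d2$, whose nonnegativity is exactly $p\le 2d/(d-2)$, and the $\ell^1$ resummation are all consistent; your reduction of the case $1\le p<2$ via $L^\infty\times L^p\to L^p$ and the embedding $\dot B^{s-1}_{p,1}\hookrightarrow\dot B^{s-1+d/2-d/p}_{2,1}$ (legitimate since there $p\le2$) also works. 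For \eqref{a.16} with $2\le p\le 4$, the arithmetic $\bigl(s-1+\tfrac d2-\tfrac dp\bigr)+d\bigl(\tfrac2p-\tfrac12\bigr)=s-1+\tfrac dp$ and the geometric-series condition $s>1-\tfrac dp$ are right, and this condition is implied by the stated hypothesis (which should read $s>1-\min(\tfrac dp,\tfrac d{p'})$; the plus sign is a typo).

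Two caveats. First, your explanation of the role of $p\le4$ in \eqref{a.15} is misplaced: it is not needed to keep $q<\infty$ (the value $q=\infty$ at $p=2$ is harmless); it is needed so that $q\ge p$, i.e.\ so that the Bernstein step $\|\dot{\Delta}_k a\|_{L^q}\lesssim 2^{k(2d/p-d/2)}\|\dot{\Delta}_k a\|_{L^p}$ is legitimate, since Bernstein only raises the Lebesgue exponent; as the hypothesis grants $p\le4$ the inequality you wrote is valid, but tracking which hypothesis buys which step is the substance of this lemma. Second, your proof of \eqref{a.16} only covers $2\le p\le4$: for $1\le p<2$ the space $L^{p/2}$ is merely quasi-normed, and neither Lemma \ref{A.4} (stated for exponents in $[1,\infty]$) nor the $L^{p/2}$-boundedness of $\dot{\Delta}_j$ applies as written; that range is treated in the cited source by a different H\"older splitting, which is also where the $d/p'$ in the hypothesis originates. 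This gap is immaterial for the present paper, since \eqref{1.2.6} restricts to $2\le p\le\min(4,2d/(d-2))$, but your argument as written does not establish the proposition in its full stated range.
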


\begin{lemma}\cite{danchin5}\label{A.12}
	Let $d\geq2$, $1\le p, q\le \infty$, $v\in \dot{B}^{s}_{q,1}(\mathbb{R}^d)$ and $\nabla u \in \dot{B}^{\frac{d}{p}}_{p,1}(\mathbb{R}^d)$.\\
	Asumme that
	\begin{equation*}
		-d \min(\frac{1}{p}, 1-\frac{1}{q})<s\le 1+d\min(\frac{1}{p}, \frac{1}{q}).
	\end{equation*}
	Then it holds the commutator estimate
	\begin{equation}\label{a.17}
		\|[\dot{\Delta}_j, u\cdot\nabla]v\|_{L^p} \lesssim d_j2^{-js}\|\nabla u\|_{\dot{B}^{\frac{d}{p}}_{p,1}}\|v\|_{\dot{B}^{s}_{q,1}}.
	\end{equation}
	In the limit case $s=-d\min(\frac{1}{p}, 1-\frac{1}{q})$, we have
	\begin{equation}\label{a.18}
		\sup2^{js}\|[\dot{\Delta}_j, u\cdot\nabla]\|_{L^p}\lesssim \|\nabla u\|_{\dot{B}^{\frac{d}{p}}_{p,1}}\|v\|_{\dot{B}^{s}_{q,\infty}}.
	\end{equation}
\end{lemma}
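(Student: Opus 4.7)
The plan is to reduce the commutator to three pieces via the Bony paradifferential decomposition, where one piece carries a genuine $2^{-j}$ gain from a kernel commutator and the other two pieces are already frequency-localised in the right way by their spectral supports. Writing $u\cdot\nabla v=T_{u_i}\partial_i v+T_{\partial_i v}u_i+R(u_i,\partial_i v)$ and expanding $u_i\partial_i\dot\Delta_j v$ in the same way, the commutator splits as
\begin{equation*}
[\dot\Delta_j,u\cdot\nabla]v=\underbrace{[\dot\Delta_j,T_{u_i}]\partial_i v}_{I_1}+\underbrace{\dot\Delta_j T_{\partial_i v}u_i-T_{\partial_i\dot\Delta_j v}u_i}_{I_2}+\underbrace{\dot\Delta_j R(u_i,\partial_i v)-R(u_i,\partial_i\dot\Delta_j v)}_{I_3}.
\end{equation*}

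For $I_1$, I would expand $[\dot\Delta_j,T_{u_i}]\partial_i v=\sum_{|k-j|\le 4}[\dot\Delta_j,\dot S_{k-1}u_i]\dot\Delta_k\partial_i v$. Writing $\dot\Delta_j f(x)=\int 2^{jd}h(2^j(x-y))f(y)\,dy$ and performing a first-order Taylor expansion of $\dot S_{k-1}u_i(y)-\dot S_{k-1}u_i(x)$, I obtain the standard kernel bound
$\|[\dot\Delta_j,\dot S_{k-1}u_i]w\|_{L^p}\lesssim 2^{-j}\|\nabla\dot S_{k-1}u\|_{L^\infty}\|w\|_{L^p},$
so that each summand costs a factor $2^{-j}\|\nabla u\|_{L^\infty}\|\dot\Delta_k\nabla v\|_{L^p}$. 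Then $\|\nabla u\|_{L^\infty}\lesssim\|\nabla u\|_{\dot B^{d/p}_{p,1}}$ via the embedding of Proposition \ref{A.5}, and $2^{-j}\|\dot\Delta_k\nabla v\|_{L^p}\sim 2^{k-j}\|\dot\Delta_k v\|_{L^p}\lesssim d_k 2^{-ks}\|v\|_{\dot B^s_{q,1}}$ after using Bernstein to convert $L^p$ to $L^q$ on a dyadic block; the restriction $|k-j|\le 4$ then delivers the desired $d_j 2^{-js}$ weight.

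For $I_2$, I would exploit that $\dot S_{k-1}\dot\Delta_j v\equiv 0$ unless $k\ge j-1$, which already forces $u$ to be measured in frequency $\gtrsim 2^j$ in the second subterm; the first subterm $\dot\Delta_j(\dot S_{k-1}\partial_i v\,\dot\Delta_k u_i)$ only survives for $|k-j|\le 4$. In both cases a direct H\"older estimate plus Bernstein yields a bound of the form $\sum_{k\gtrsim j}d_k 2^{-ks}\,2^{(j-k)\sigma}\|\nabla u\|_{\dot B^{d/p}_{p,1}}\|v\|_{\dot B^s_{q,1}}$ for some power $\sigma>0$ depending on $p,q,d$, which sums to $d_j 2^{-js}$ exactly under the upper constraint $s\le 1+d\min(1/p,1/q)$. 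For $I_3$ I would use the spectral support of the remainder: after applying $\dot\Delta_j$ only $k\ge j-N_0$ contribute, so the same scheme applies, with the lower constraint $s>-d\min(1/p,1-1/q)$ guaranteeing convergence of the series over $k\ge j$ of $2^{-k(s+d\min(1/p,1-1/q))}$.

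The main technical obstacle is the careful index bookkeeping: one must ensure that the various H\"older combinations produce norms compatible with $L^p$ output and $\dot B^s_{q,1}$ input on $v$, that every Bernstein shift (from $L^q$ to $L^p$, or from $\nabla v$ back to $v$) is legal, and that each geometric tail in $2^{(s'-s)k}$ converges; this is where both the upper and lower thresholds on $s$ in the statement appear as sharp. For the limit case $s=-d\min(1/p,1-1/q)$, the outer $\ell^1$ summation in $j$ fails, so I would replace the $\ell^1$ bound by taking $\sup_j 2^{js}(\cdot)$ and use the weaker hypothesis $v\in\dot B^s_{q,\infty}$, which exactly matches the statement of the limit inequality. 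This yields both \eqref{a.17} and \eqref{a.18} as stated.
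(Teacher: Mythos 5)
You should first note that the paper itself contains no proof of this lemma: it is quoted verbatim (with citation) from \cite{danchin5}, so the only meaningful comparison is with the standard proof given there, and your architecture coincides with it --- Bony decomposition of the commutator into $[\dot\Delta_j,T_{u_i}]\partial_i v$, the $T_{\partial_i v}u_i$-type terms and the remainder terms, the first-order Taylor/kernel bound supplying the $2^{-j}$ gain, the upper threshold on $s$ entering through the low-frequency sums defining $\dot S_{k-1}\nabla v$, and the lower threshold through the tail $\sum_{k\gtrsim j}$ in the remainder. Your endpoint mechanism (take $\sup_j 2^{js}(\cdot)$ and weaken $v$ to $\dot B^{s}_{q,\infty}$) is also the right one, although what fails at $s=-d\min(\frac1p,1-\frac1q)$ is the extraction of a summable coefficient $d_j$, not an ``outer $\ell^1$ summation in $j$'': estimate \eqref{a.17} is pointwise in $j$.

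The genuine gap is that the one step you carry out explicitly does not close, and it sits exactly where the two Lebesgue exponents interact --- which is the actual content of the lemma. In $I_1$ you need $\|\dot\Delta_k\nabla v\|_{L^p}$ while $v$ is only controlled in $L^q$-based norms; Bernstein's inequality only raises the integrability exponent, so the conversion you invoke is legal only when $q\le p$, and even then it produces an extra factor $2^{kd(\frac1q-\frac1p)}$ that you silently discard, so your computation actually gives $d_j2^{-js}2^{jd(\frac1q-\frac1p)}$ rather than $d_j2^{-js}$ unless $p=q$. The same mismatch is hidden in $I_2$ and $I_3$, where ``H\"older plus Bernstein, for some $\sigma>0$'' is precisely the bookkeeping in which the hypotheses $-d\min(\frac1p,1-\frac1q)<s\le 1+d\min(\frac1p,\frac1q)$ must be used, and no H\"older/Bernstein combination can produce an $L^p$ left-hand side from $\nabla u\in\dot B^{\frac dp}_{p,1}\hookrightarrow L^\infty$ together with $L^q$ blocks of $v$ when $q>p$. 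The resolution is that in \cite{danchin5} the commutator is estimated in $L^q$, the exponent attached to $v$ (the transcription above carries a typo, harmless in this paper since it is applied with $q=p$); with $L^q$ on the left, $I_1$ needs no Bernstein at all, and the case distinction $p\le q$ versus $q\le p$ in $I_2$, $I_3$ is what generates exactly the stated restrictions on $s$. So the skeleton is right, but the index algebra is asserted rather than verified, and for the $L^p$-output statement as written your argument would fail whenever $p\neq q$.
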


\begin{proposition}\cite{Bahouri-2011}\label{A.13}
	Let $F: \mathbb{R} \mapsto \mathbb{R}$ be a smooth function with $F(0)=0$, $1\le p, r\le \infty$ and $s>0$. Then $F: \dot{B}^{s}_{p,r}(\mathbb{R}^d)\cap(L^\infty(\mathbb{R}^d))$ and
	\begin{equation}\label{a.19}
		\|F(u)\|_{\dot{B}^{s}_{p,r}}\le C\|u\|_{\dot{B}^{s}_{p,r}}
	\end{equation}
	with $C$ a constant depending only on $\||u\|_{L^\infty}$, $s$, $p$, $d$ and derivatives of $F$.
	
	If $s>-\min(\frac{d}{p}, \frac{d}{p'})$, then $F: \dot{B}^{s}_{p,r}(\mathbb{R}^d)\cap\dot{B}^{\frac{d}{p}}_{p,1}(\mathbb{R}^d) \mapsto \dot{B}^{s}_{p,r}(\mathbb{R}^d)\cap\dot{B}^{\frac{d}{p}}_{p,1}(\mathbb{R}^d)$, and
	\begin{equation}\label{a.20}
		\|F(u)\|_{\dot{B}^{s}_{p,r}}\le C(1+\|u\|_{\dot{B}^{\frac{d}{p}}_{p,1}})\|u\|_{\dot{B}^{s}_{p,r}}.
	\end{equation}
\end{proposition}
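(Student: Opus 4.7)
The plan is to establish both estimates by the classical telescoping/paralinearization method, following the framework in \cite{Bahouri-2011}.

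For the first estimate (A.19) with $s>0$ and $u\in\dot{B}^{s}_{p,r}\cap L^\infty$, I would start from the telescoping identity
\[
F(u)=\sum_{j\in\mathbb{Z}}\bigl(F(\dot{S}_{j+1}u)-F(\dot{S}_j u)\bigr)=\sum_{j\in\mathbb{Z}} m_j\,\dot{\Delta}_j u,\quad m_j:=\int_0^1 F'\bigl(\dot{S}_j u+\tau\dot{\Delta}_j u\bigr)d\tau,
\]
where convergence in $\mathcal{S}'_h$ is guaranteed by $F(0)=0$. Since $\|\dot{S}_j u\|_{L^\infty}+\|\dot{\Delta}_j u\|_{L^\infty}\lesssim\|u\|_{L^\infty}$ and $F\in C^\infty$, the multipliers $m_j$ are uniformly bounded in $L^\infty$ with a constant depending only on $\|u\|_{L^\infty}$ and finitely many derivatives of $F$. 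Then for any dyadic block $\dot{\Delta}_k$, one splits the sum into the three Bony zones ($|k-j|\le N_0$ versus $k\ll j$ versus $k\gg j$) and controls each piece by Bernstein's inequalities in Lemma \ref{A.4}; the assumption $s>0$ is used precisely to sum the low-frequency tails of the $m_j$'s. Weighting by $2^{ks}$ and taking the $\ell^r$-norm produces (A.19).

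For the second estimate (A.20) under the weaker hypothesis $u\in\dot{B}^s_{p,r}\cap\dot{B}^{d/p}_{p,1}$, the direct telescoping fails because $u$ need not be $L^\infty$-bounded beyond the embedding $\dot{B}^{d/p}_{p,1}\hookrightarrow L^\infty$ from Proposition \ref{A.5}. Instead I would invoke Bony's paralinearization decomposition
\[
F(u)=T_{F'(u)}u+R_F(u),\qquad R_F(u):=F(u)-T_{F'(u)}u,
\]
and treat the two terms separately. The paraproduct piece is bounded via Proposition \ref{A.11}-type estimates by $\|T_{F'(u)}u\|_{\dot{B}^s_{p,r}}\lesssim\|F'(u)\|_{L^\infty}\|u\|_{\dot{B}^s_{p,r}}$, where $\|F'(u)\|_{L^\infty}\le C(\|u\|_{L^\infty})$. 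The remainder $R_F(u)$ gains regularity and can be rewritten as $R_F(u)=\sum_j R_j\,\dot{\Delta}_j u$ with $R_j$ essentially of the form $\int_0^1(F'(\dot{S}_j u+\tau\dot{\Delta}_j u)-\dot{S}_{j-1}F'(u))d\tau$; a chain-rule computation combined with recursive application of (A.19) to $F'$ gives $\|F'(u)\|_{\dot{B}^{d/p}_{p,1}}\lesssim 1+\|u\|_{\dot{B}^{d/p}_{p,1}}$. One then assembles the remainder bound using paraproduct/remainder estimates in the range $s>-\min(d/p,d/p')$.

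The main obstacle is the sharp endpoint $s>-\min(d/p,d/p')$ in the remainder estimate: tracking the threshold requires bookkeeping of both the low-frequency Bernstein losses and the summability constraints in the Bony remainder term, and in particular it is the interplay between the regularity of $F'(u)$ (obtained recursively) and the paraproduct bound that produces exactly this range. Since this is a well-established result in the literature, I would rely on the detailed argument in \cite{Bahouri-2011} for the endpoint bookkeeping rather than reproving it from scratch.
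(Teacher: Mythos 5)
The paper does not prove Proposition \ref{A.13}: it is quoted directly from \cite{Bahouri-2011}, and your sketch follows exactly the route of that reference — Meyer's first-linearization/telescoping argument (with uniform $L^\infty$ and Bernstein-type derivative bounds on the multipliers $m_j$, the hypothesis $s>0$ giving summability of the contributions with $j\ge k$) for \eqref{a.19}, and Bony paralinearization together with the first estimate for \eqref{a.20}. The only detail to tidy is that \eqref{a.19} must be applied to $F'-F'(0)$ rather than to $F'$ itself (since $F'(0)$ need not vanish), the constant part being harmless because $T_{F'(0)}u=F'(0)u$; with that adjustment your outline is correct and is essentially the cited proof.
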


According to the analysis of Xu-Zhu \cite{xu2023}, there are three inequalities as follows:\\
If $2\le p < d$, then
\begin{equation*}
	\|fg^h\|^l_{\dot{B}^{-\sigma_1}_{2,\infty}}\lesssim\|fg^h\|^l_{\dot{B}^{-\sigma_0}_{2,\infty}}\lesssim\Big(\|f\|_{\dot{B}^{\frac{d}{p}-1}_{p,1}}+\|f^l\|_{L^{p^*}}\Big)\|g^h\|_{\dot{B}^{1-\frac{d}{p}}_{p,1}},
\end{equation*}
if $p=d$, then
\begin{equation*}
	\|fg^h\|^l_{\dot{B}^{-\sigma_1}_{2,\infty}}\lesssim\|fg^h\|^l_{\dot{B}^{-\sigma_0}_{2,\infty}}\lesssim\|f\|_{\dot{B}^{0}_{d,1}}\|g^h\|_{\dot{B}^{0}_{d,1}},
\end{equation*}
if $d<p\le2d$, then
\begin{equation*}
	\|fg^h\|^l_{\dot{B}^{-\sigma_1}_{2,\infty}}\lesssim\|fg^h\|^l_{\dot{B}^{-\sigma_0}_{2,\infty}}\lesssim\Big(\|f\|_{\dot{B}^{1-\frac{d}{p}}_{p,1}}+\|f^l\|_{L^{p^*}}\Big)\|g^h\|_{\dot{B}^{\frac{d}{p}-1}_{p,1}},
\end{equation*}
where $\frac{1}{p^*}\triangleq\frac{1}{2}-\frac{1}{p}$. In order to use the above inequalities expediently, we try to summarize these by using embedding $\dot{B}^{\frac{d}{p}}_{2,1}\hookrightarrow L^{p^*}$ to acquire a new inequality in a weaker version:
\begin{proposition}\label{A.14}
	If $2\le p\le2d$, then we have
	\begin{equation}
		\|fg^h\|^l_{\dot{B}^{-\sigma_1}_{2,\infty}}\lesssim\Big(\|f\|^l_{\dot{B}^{\frac{d}{2}-1}_{2,1}}+\|f\|^h_{\dot{B}^{\frac{d}{p}}_{p,1}}\Big)\|g\|^h_{\dot{B}^{\frac{d}{p}-1}_{p,1}},
	\end{equation}
where $\sigma_1$ satisfy $1-\frac{d}{2}<\sigma_1\le\sigma_0\triangleq\frac{2d}{p}-\frac{d}{2}$.
\end{proposition}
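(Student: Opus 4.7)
The proposition is a direct repackaging of the three Xu--Zhu estimates recalled immediately above its statement, so my plan is to take each case in turn and reduce its right-hand side to the claimed form by frequency splitting, standard embeddings, and Bernstein's inequality.

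The first simplification is on the $g$-factor. For the two cases $2\le p<d$ and $p=d$, the Xu--Zhu inequalities produce $\|g^h\|_{\dot B^{1-d/p}_{p,1}}$ and $\|g^h\|_{\dot B^{0}_{d,1}}$ respectively. Since in the high-frequency regime larger regularity indices dominate (by Lemma \ref{A.4} applied annulus-by-annulus), we have $\|g^h\|_{\dot B^s_{p,1}}\lesssim\|g\|^h_{\dot B^{s'}_{p,1}}$ whenever $s\le s'$. Applied with $s'=d/p-1$ this requires $1-d/p\le d/p-1$, i.e.\ $p\le d$, which is precisely the range of Cases 1 and 2. For $d<p\le 2d$, the index is already $d/p-1$ and there is nothing to do.

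The second simplification is on the $f$-factor $\|f\|_{\dot B^{d/p-1}_{p,1}}+\|f^l\|_{L^{p^\ast}}$ (with $\|f\|_{\dot B^0_{d,1}}$ in the critical case, which I first write as $\|f^l\|_{\dot B^0_{d,1}}+\|f^h\|_{\dot B^0_{d,1}}$). Split $f=f^l+f^h$. For the low part, apply the embedding $\dot B^{d/2-d/p+s}_{2,1}\hookrightarrow\dot B^s_{p,1}$ from Proposition \ref{A.5} to transfer $\|f^l\|_{\dot B^s_{p,1}}$ into an $L^2$-based Besov norm, then use low-frequency Bernstein (larger regularity index yields a weaker norm in low frequency) to pass the resulting index down to $d/2-1$. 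For the high part, high-frequency Bernstein yields $\|f^h\|_{\dot B^{d/p-1}_{p,1}}\lesssim\|f\|^h_{\dot B^{d/p}_{p,1}}$ directly. Finally, the term $\|f^l\|_{L^{p^\ast}}$ is handled by the embedding $\dot B^{d/p}_{2,1}\hookrightarrow L^{p^\ast}$ recalled explicitly just before the proposition, followed once more by low-frequency Bernstein, giving $\|f^l\|_{L^{p^\ast}}\lesssim\|f^l\|_{\dot B^{d/p}_{2,1}}\lesssim\|f\|^l_{\dot B^{d/2-1}_{2,1}}$.

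The main obstacle is purely bookkeeping: one must track the direction of the inequalities in the low- versus high-frequency regimes (in low frequency, raising the regularity index \emph{weakens} the norm, while in high frequency the opposite is true) and verify that all involved indices are admissible under $2\le p\le 2d$ and $1-d/2<\sigma_1\le\sigma_0$. The most delicate sub-case is the critical $p=d$ (where $p^\ast=\infty$ and one instead uses the endpoint embedding $\dot B^{d/2}_{2,1}\hookrightarrow L^\infty$), but the same arithmetic still closes. Once each case reduces to the target form, summing the three sub-cases concludes the proof.
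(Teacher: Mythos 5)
Your proposal is correct and is essentially the paper's own argument: the authors justify Proposition \ref{A.14} in one line by combining the three displayed Xu--Zhu inequalities with the embedding $\dot{B}^{\frac{d}{p}}_{2,1}\hookrightarrow L^{p^\ast}$ and elementary low/high-frequency index comparisons, which is exactly the reduction you carry out. One caveat worth noting (shared by the paper's own one-line justification): your final low-frequency step $\|f^l\|_{\dot{B}^{\frac{d}{p}}_{2,1}}\lesssim\|f\|^l_{\dot{B}^{\frac{d}{2}-1}_{2,1}}$ requires $\frac{d}{p}\ge\frac{d}{2}-1$, i.e.\ $p\le\frac{2d}{d-2}$, which holds under the standing restriction \eqref{1.2.6} on $p$ used throughout the paper but not for the whole range $2\le p\le 2d$ in the statement.
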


\begin{proposition}\cite{M}(Gronwall inequality)\label{A.15}
	Let $u(t)$ be a nonnegative function that satisfies the integral inequality
	\begin{equation}
		u(t)\le c+\int_{t_0}^{t}(a(s)u(s)+b(s)u^\alpha(s))ds,\ \ c\ge0,\ \ \alpha\ge0,
	\end{equation}
	where $a(t)$ and $b(t)$ are continuous nonnegative functions for $t\ge t_0$.
	For $0\le \alpha<1$ we have
	\begin{equation}
		\begin{split}
			u(t)\le & \{c^{1-\alpha}\exp \left[(1-\alpha\int_{t_0}^{t}a(s)ds)\right]\\
			&+(1-\alpha)\int_{t_0}^{t}b(s)\exp \left[(1-\alpha)\int_{s}^{t}a(r)dr\right]ds\}^{\frac{1}{1-\alpha}},
		\end{split}
	\end{equation}
	for $\alpha=1$,
	\begin{equation}
		u(t)\le c\exp\left\{\int_{t_0}^{t}[a(s)+b(s)]ds\right\}.
	\end{equation}
\end{proposition}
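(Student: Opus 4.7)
The plan is to treat the two cases $\alpha=1$ and $0\le\alpha<1$ separately. In both cases the core idea is to build a majorizing scalar ODE whose solution dominates $u$, solve that ODE explicitly (by an integrating factor in the linear case, by a Bernoulli substitution in the sublinear case), and then transfer the bound back to $u$.

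For $\alpha=1$, I would first assume $c>0$ and define $\phi(t):=c+\int_{t_0}^{t}(a(s)+b(s))u(s)\,ds$, so that $u\le\phi$, $\phi(t_0)=c$, and
\begin{equation*}
\phi'(t)=(a(t)+b(t))u(t)\le(a(t)+b(t))\phi(t).
\end{equation*}
Since $\phi>0$ everywhere, dividing by $\phi$ yields $(\ln\phi)'\le a+b$, and integrating gives $\phi(t)\le c\exp\int_{t_0}^{t}(a+b)\,ds$. Combined with $u\le\phi$ this is the stated bound; the case $c=0$ follows by replacing $c$ with $c+\varepsilon$ and sending $\varepsilon\to 0^+$.

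For $0\le\alpha<1$, I would compare $u$ with the solution $U$ of the majorizing initial value problem
\begin{equation*}
U'(t)=a(t)U(t)+b(t)U^\alpha(t),\qquad U(t_0)=c+\varepsilon,\ \varepsilon>0,
\end{equation*}
and argue $u(t)\le U(t)$. Then the Bernoulli-type substitution $W:=U^{1-\alpha}$ turns this into the linear ODE
\begin{equation*}
W'(t)=(1-\alpha)\bigl[a(t)W(t)+b(t)\bigr],\qquad W(t_0)=(c+\varepsilon)^{1-\alpha},
\end{equation*}
whose explicit solution via the integrating factor $\exp\bigl[-(1-\alpha)\int_{t_0}^{t}a(s)\,ds\bigr]$ is
\begin{equation*}
W(t)=(c+\varepsilon)^{1-\alpha}\exp\!\left[(1-\alpha)\!\int_{t_0}^{t}\!a\,ds\right]+(1-\alpha)\!\int_{t_0}^{t}\!b(s)\exp\!\left[(1-\alpha)\!\int_{s}^{t}\!a(r)\,dr\right]ds.
\end{equation*}
Raising to the power $1/(1-\alpha)$ recovers $U$, and letting $\varepsilon\to 0^+$ produces exactly the formula in the statement. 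Alternatively, one may work directly with $\phi_\varepsilon(t):=c+\varepsilon+\int_{t_0}^{t}(au+bu^\alpha)\,ds$, noting that $u\le\phi_\varepsilon$ and the monotonicity of $r\mapsto r^\alpha$ give $\phi_\varepsilon'\le a\phi_\varepsilon+b\phi_\varepsilon^\alpha$, and then applying the same Bernoulli analysis to $\phi_\varepsilon$.

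The principal obstacle is the comparison step $u\le U$ in the sublinear case: the right-hand side of the majorizing ODE is not Lipschitz at $U=0$ because of the $U^\alpha$ factor, so the standard ODE comparison theorem cannot be invoked naively. This is precisely why the $\varepsilon$-regularization is introduced; with $U(t_0)=c+\varepsilon>0$ the solution $U$ stays bounded away from zero on a small initial interval where Lipschitz continuity holds, the inequality $u\le U$ propagates across that interval, and a bootstrap together with continuity in $\varepsilon$ extends it globally in $t$ before one sends $\varepsilon\to 0^+$.
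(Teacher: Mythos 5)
This proposition is quoted in the paper from the reference [M] (Mitrinovic--Pecaric--Fink) and the paper supplies no proof of its own, so there is nothing internal to compare against; your argument stands on its own and it is correct, following the classical route. The $\alpha=1$ case via $\phi(t)=c+\int_{t_0}^{t}(a+b)u\,ds$ and $(\ln\phi)'\le a+b$ is the standard Gronwall argument, and the $\varepsilon$-shift handles $c=0$. For $0\le\alpha<1$, the fully rigorous version of your plan is in fact your ``alternative'' route: set $\phi_\varepsilon(t)=c+\varepsilon+\int_{t_0}^{t}(au+bu^\alpha)\,ds$, use $u\le\phi_\varepsilon$, the nonnegativity of $a,b$ and the monotonicity of $r\mapsto r^\alpha$ to get $\phi_\varepsilon'\le a\phi_\varepsilon+b\phi_\varepsilon^\alpha$ with $\phi_\varepsilon>0$, then the Bernoulli substitution $W=\phi_\varepsilon^{1-\alpha}$ gives the linear inequality $W'\le(1-\alpha)(aW+b)$, which integrates by the stated integrating factor and yields the claimed bound after $\varepsilon\to0^+$. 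This avoids entirely the delicate step you flag in your primary route, namely comparing the merely integrally-majorized (possibly non-differentiable) $u$ with the ODE solution $U$ through a non-Lipschitz nonlinearity; as written, that comparison would itself need the $\phi_\varepsilon$ construction (or a Bihari-type lemma) to be justified, so you should present the direct route as the proof rather than as an aside. Note also that your derivation reproduces the formula with the factor $\exp\bigl[(1-\alpha)\int_{t_0}^{t}a(s)\,ds\bigr]$, which is the correct reading of the statement; the parenthesis placement in the paper's display is a typo.
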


\section*{Acknowledgments}
		
\bigbreak
		
{\bf Funding}: The research was supported by National Natural Science Foundation of China (11971100) and Natural Science Foundation of Shanghai (22ZR1402300).\\
{\bf Conflict of Interest}: The authors declare that they have no conflict of interest.

\bibliographystyle{plain}

	\end{document}